\newcommand\obullet[1]{\ThisStyle{\ensurestackMath{%
  \stackon[1pt]{\SavedStyle#1}{\SavedStyle\kern.6\LMpt\bullet}}}}
\newcommand\ocirc[1]{\ThisStyle{\ensurestackMath{%
  \stackon[1pt]{\SavedStyle#1}{\SavedStyle\kern.6\LMpt\circ}}}}
          \newtheorem{theorem}{Theorem}[section]
      \newtheorem{definition}[theorem]{Definition}
      \newtheorem{proposition}[theorem]{Proposition}
      \newtheorem{corollary}[theorem]{Corollary}
      \newtheorem{lemma}[theorem]{Lemma}
      \newtheorem{example}[theorem]{Example}
      \newtheorem{remark}[theorem]{Remark}
      \newcommand{\BB}{{\mathbb B}}
      \newcommand{\CC}{{\mathbb C}}
      \newcommand{\NN}{{\mathbb N}}
      \newcommand{\ZZ}{{\mathbb Z}}
      \newcommand{\DD}{{\mathbb D}}
      \newcommand{\RR}{{\mathbb R}}
      \newcommand{\FF}{{\mathbb F}}
      \newcommand{\TT}{{\mathbb T}}
\DeclareMathOperator{\Span}{span}
      \newcommand{\cA}{{\mathcal A}}
      \newcommand{\cC}{{\mathcal C}}
      \newcommand{\cD}{{\mathcal D}}
      \newcommand{\cE}{{\mathcal E}}
      \newcommand{\cG}{{\mathcal G}}
      \newcommand{\cH}{{\mathcal H}}
      \newcommand{\cJ}{{\mathcal J}}
      \newcommand{\cK}{{\mathcal K}}
      \newcommand{\cL}{{\mathcal L}}
      \newcommand{\cM}{{\mathcal M}}
       \newcommand{\cO}{{\mathcal O}}
      \newcommand{\cQ}{{\mathcal Q}}
      \newcommand{\cN}{{\mathcal N}}
      \newcommand{\cP}{{\mathcal P}}
      \newcommand{\cS}{{\mathcal S}}
      \newcommand{\cT}{{\mathcal T}}
      \newdimen\expt
      \def\boxit#1{\setbox0\hbox{$\displaystyle{#1}$}
            \hbox{\lower.4\expt
       \hbox{\lower3\expt\hbox{\lower\dp0
            \hbox{\vbox{\hrule height.4\expt
       \hbox{\vrule width.4\expt\hskip3\expt
            \vbox{\vskip3\expt\box0\vskip2\expt}%
       \hskip3\expt\vrule width.4\expt}\hrule height.4\expt}}}}}}
\begin{document}
     
       \pagestyle{myheadings}
      \markboth{ Gelu Popescu}{ Noncommutative domains, universal operator models, and operator algebras }

      \title [ Noncommutative domains, universal operator models, and operator algebras  ]
      {   Noncommutative domains, universal operator models, and operator algebras    }
        \author{Gelu Popescu}
\date{February 28, 2023 (revised version)}
     \thanks{Research supported in part by  NSF grant DMS 1500922}
       \subjclass[2010]{Primary:   47A20; 46L45; 46L52,     Secondary: 47A60; 47B37; 47A56.
   }
      \keywords{Multivariable operator theory,  Noncommutative domains,  Universal operator models, Fock spaces,  Noncommutative Hardy algebras,   $C^*$-algebras}
      \address{Department of Mathematics, The University of Texas
      at San Antonio \\ San Antonio, TX 78249, USA}
      \email{\tt gelu.popescu@utsa.edu}

\begin{abstract}    Let $B(\cH)$ be the algebra of all bounded linear operators on a Hilbert space $\cH$.  The main goal of the  paper  is to find   large classes of noncommutative   domains   in $B(\cH)^n$  with prescribed  universal operator models, acting on the full Fock space  with $n$ generators, and to study these domains and their universal models  in connection with the Hardy algebras and the $C^*$-algebras  they generate.
 While the class  of these domains contains the  regular noncommutative domains previously studied in the literature, the main  focus of the present paper is on  the {\it non-regular}  domains. The multi-variable operator theory of these domains is developed throughout the paper. 
\end{abstract}

      \maketitle

\section*{Contents}
{\it

\quad Introduction
\begin{enumerate}
   \item[1.]     Universal operator  models associated with  free holomorphic functions
   \item[2.]     Noncommutative Hardy algebras and functional calculus
   \item[3.]     Multipliers and multi-analytic  operators
   \item[4.]      Admissible free holomorphic functions for operator model theory
   \item[5.]     $C^*$-algebras associated with noncommutative domains and classification 
  
   \end{enumerate}
    
    \quad References

}

\section*{Introduction}

Let  $S$ be the unilateral shift  defined by $(S\varphi) (z):=zf(z)$ on the   Hardy space $H^2(\DD)$  of all analytic functions $f(z)=\sum_{k=0}^\infty c_k z^k$ on the open unit disc  $\DD:=\{z\in\CC: \ |z|<1\}$  with square-summable coefficients. It is well-known that  a bounded linear operator $T\in B(\cH)$   has its adjoint unitarily equivalent to $(S^*\otimes I_\cE)|_\cN$, where $\cN$ is a co-invariant subspace of $S\otimes I_\cE$, if and only if  $T$ is a pure contraction, i.e. $\|T\|\leq1$ and ${T^*}^n\to 0$ strongly, as $n\to\infty$.  In this case, we say that $S$ is a universal model for $T$. The closure  (in the operator norm) of the set of all operators which admit $S$ as universal model    coincides with the operator unit ball
$[B(\cH)]_1:=\{X\in B(\cH):\ XX^*\leq I\}$. The study of this  ball has generated the  Sz.-Nagy--Foia\c s   \cite{SzFBK-book}, \cite{S}  theory of contractions   which had profound implications in operator theory, function theory, interpolation and system theory, prediction theory, etc.

A free analogue of Sz.-Nagy--Foia\c s theory  has been developed
 for  the closed unit ball
$$
[B(\cH)^n]_1:=\{(X_1,\ldots, X_n)\in B(\cH)^n:\ X_1 X_1^*+\cdots +X_nX_n^*\leq I\}
$$  
(see \cite{F}, \cite{B}, \cite{Po-isometric}, \cite{Po-charact}, \cite{Po-von}, \cite{Po-funct}, \cite{Po-analytic},  \cite{Po-poisson}, \cite{APo1}, \cite{APo2}, \cite{Po-curvature}, \cite{Po-Nehari}, \cite{Po-entropy}, \cite{Po-varieties}, \cite{Po-unitary}, \cite{Po-automorphisms},   \cite{BV}, \cite{DP1}, \cite{DP2}, \cite{DKP}, \cite{SSS1}, \cite{SSS2} and the reference there in). The corresponding universal model is the $n$-tuple of left creation operators  ${\bf S}=(S_1,\ldots, S_n)$ acting on the full Fock space with $n$ generators.
We know that an $n$-tuple $T=(T_1,\ldots, T_n)\in B(\cH)^n$ admits ${\bf S}$ as universal model, i.e. there is a Hilbert space $\cG$  such that $T_i^*=(S^*_i\otimes I_\cG)|_{\cH}$ for every $i\in \{1,\ldots, n\}$, if and only if 
$T$ is a  pure   row contraction, i.e.  $T\in [B(\cH)^n]_1$ and   SOT-$\lim_{k\to \infty} \sum_{|\alpha|=k} T_\alpha T_\alpha^*=0$.
The closure  (in the operator norm) of the set of all  $n$-tuples  $T\in B(\cH)^n$ which admit
 ${\bf S}$ as universal model   coincides with   the closed ball $[B(\cH)^n]_1$.
 
 Let $\FF_n^+$ be the unital free semigroup on $n$ generators
$g_1,\ldots, g_n$ and the identity $g_0$. The length of $\alpha\in
\FF_n^+$ is defined by $|\alpha|:=0$ if $\alpha=g_0$  and
$|\alpha|:=k$ if
 $\alpha=g_{i_1}\cdots g_{i_k}$, where $i_1,\ldots, i_k\in \{1,\ldots, n\}$.  If $Z_1,\ldots, Z_n $ are noncommuting indeterminates   and $\alpha=g_{i_1}\cdots g_{i_k}$,    we
denote $Z_\alpha:= Z_{i_1}\cdots Z_{i_k}$  and $Z_{g_0}:=1$.
Let $p:=\sum_{\alpha\in \FF_n^+} a_\alpha
Z_\alpha$ be a positive regular noncommutative polynomial,  i.e. 
$a_\alpha\geq 0$ for every $\alpha\in \FF_n^+$, \ $a_{g_0}=0$,
 \ and  $a_{g_i}>0$ for all $i\in \{1,\ldots, n\}$.
 Given $m\in \NN:=\{1,2,\ldots \}$, we define  the  noncommutative domain
$$
{\cD}_p^m(\cH):=\left\{ X=(X_1,\ldots, X_n)\in B(\cH)^n: \ \Phi_{p,X}(I)\leq I \text{ and } 
( id-\Phi_{p,X})^m(I)\geq 0  \right\},
$$
where $\Phi_{p,X}:B(\cH)\to B(\cH)$ is the completely positive linear map
given by $\Phi_{p,X}(Y):=\sum_{\alpha\in \FF_n^+} a_\alpha X_\alpha YX_\alpha^*$.
  When $m\geq 1$, $n\geq 1$,   the noncommutative domain ${\cD}_p^m(\cH)$ was studied  in   \cite{Po-domains} (when $m=1$), and in \cite{Po-Berezin} (when $m\geq 2$). In this case, the corresponding universal model is an $n$-tuple of weighted left creation operators ${\bf W}=(W_1,\ldots, W_n)$  acting on the full Fock space.
  Even in this case, the noncommutative domain ${\cD}_p^m(\cH)$ coincides with the closure of the set of all $n$-tuples   of operators $T\in B(\cH)^n$ which admit
 ${\bf W}$ as universal model.  
  
  We should mention that,  in the commutative setting (i.e. $X_iX_j=X_jX_i$) 
  if  $m\geq 1$ and  $p=Z_1+\cdots + Z_n$  the corresponding commutative domain was studied by  Drury \cite{Dru}, Arveson \cite{Arv3-acta}, Bhattacharyya-Eschmeier-Sarkar \cite{BES}, Timotin \cite{T},  and by the author \cite{Po-poisson}, \cite{Po-varieties} (when $m=1$), and  by Athavale \cite{At}, M\" uller \cite{M}, M\" uller-Vasilescu \cite{MV},
   Vasilescu \cite{Va},  and Curto-Vasilescu \cite{CV1} (when $m\geq 2$).
    If
$m\geq 2$ and    $p=Z$,  the
    corresponding domain coincides with the set of all
    $m$-hypercontractions  studied by Agler  in \cite{Ag1}, \cite{Ag2},
    and  recently by
    Olofsson \cite{O1}, \cite{O2}, and by Ball and Bolotnikov \cite{BB}.  
 The commutative case when $m\geq 1$ and   $p$ is a positive regular   polynomial
  was considered  by  S. Pott \cite{Pot}.        

Inspired by the above-mentioned work in multi-variable operator theory  and by the more  recent papers  by
  Clou\^ atre and Hartz  \cite{CH}, and  by Bickel, Hartz, and McCarthy \cite{BHM}  on various aspects of the multiplier algebras associated with reproducing kernel Hilbert spaces, we were led to writing the present paper, which has the roots in our recent work \cite{Po-invariant}, \cite{Po-Bergman}, \cite{Po-wold} in noncommutative multivariable operator theory.

  The main goal of the paper  is to find   large classes of noncommutative   domains   in $B(\cH)^n$  with prescribed  universal models, acting on the full Fock space $F^2(H_n)$ with $n$ generators, and to study these domains and their universal models  in connection with the Hardy algebras and $C^*$-algebras  they generate. The emphasize  is on the multi-variable operator theory  of the {\it non-regular noncommutative domains} in $B(\cH)^n$.  
   Given a formal power series $g=1+\sum_{|\alpha|\geq 1}b_\alpha Z_\alpha$ with strictly positive coefficients, we associate
an $n$-tuple ${\bf W}=(W_1,\ldots, W_n)$ of weighted left creation operators acting on the full Fock space. The  first goal  is to find  minimal conditions on the coefficients $\{b_\alpha\}$ to ensure that ${\bf W}$ is the {\it universal operator  model}  for   the $n$-tuples of operators in a noncommutative set $\cD(\cH)\subset B(\cH)^n$ which contains a ball $[B(\cH)^n]_\epsilon$ for some $\epsilon>0$, i.e. for each $T\in \cD(\cH)$, there is a Hilbert space $\cG$ such that
$$
T_i^*=(W_i^*\otimes I_\cG)|_{\cH},\qquad i\in \{1,\ldots, n\}.
$$
Moreover, we want to  find the maximal noncommutative set $\cD(\cH)$ so that each $T\in \cD(\cH)$ has ${\bf W}$ as universal operator model.
This goal is achieved  by  introducing the class of  {\it admissible free holomorphic functions  for operator model theory} and by showing that it  is in one-to-one  correspondence with a class of universal models ${\bf W}=(W_1,\ldots, W_n)$ which is   in one-to-one  correspondence with a class of  maximal noncommutative domains in $B(\cH)^n$.  
We  study these noncommutative domains, their universal models and the algebras they generate. While the class  of these domains contains the regular domains previously studied in \cite{Po-domains}, the main  focus of the present paper will be on  non-regular noncommutative domains in $B(\cH)^n$  and their  connection with the regular ones. The operator theory of these  domains is developed throughout the paper.

Now, let us discuss the structure of the paper.
 For each  formal power series $g= 1+\sum_{ |\alpha|\geq 1} b_\alpha
Z_\alpha$ with $b_\alpha>0$,
 we  define, in Section 1,  an $n$-tuple ${\bf W}=(W_1,\ldots, W_n)$
of weighted left creation operators acting on the full Fock space  $F^2(H_n)$. We provide necessary and sufficient conditions  so that   an $n$-tuple $T=(T_1,\ldots, T_n)\in B(\cH)^n$  admits ${\bf W}$ as universal model (see Theorem \ref{intertwining}).  This leads to the introduction of  the class of free holomorphic functions admissible for multi-variable 
operator model theory.  Here are the questions that we  answer in Section 1.
Under what conditions on the weights  $\{b_\alpha\}_{\alpha\in \FF_n^+}$  is the associated $n$-tuple 
${\bf W}=(W_1,\ldots, W_n)$   of weighted left creation operators on the full Fock space $F^2(H_n)$   a
{\it universal model } for the $n$-tuples  of operators in a noncommutative set  $\cD(\cH)\subset B(\cH)^n$ which contains a ball $[B(\cH)^n]_\epsilon$  for some $\epsilon>0$ $?$  What is the maximal noncommutative set $\cD(\cH)$ so that each $n$-tuple $T\in \cD(\cH)$ has ${\bf W}$ as universal model $?$
The main result of this section asserts   that, given an admissible  free holomorphic function  $g$  and  an $n$-tuple of operators $T=(T_1,\ldots, T_n)\in B(\cH)^n$, there is a Hilbert spaces $\cG$ such that 
$$
T_i^*=(W_i^*\otimes I_\cG)|_{\cH},\qquad i\in \{1,\ldots, n\},
$$
where $\cH$ is  identified with a coinvariant subspace  for the operators $W_1\otimes I_\cG,\ldots, W_n\otimes I_\cG$, if and only  if $T$  is    a {\it  pure $n$-tuple} in the noncommutative set  $\cD_{g^{-1}}(\cH)$, which is precisely described. In addition, we  introduce the {\it noncommutative domain algebra} $\cA(g)$ as the norm-closed non-self-adjoint algebra generated by $W_1,\ldots, W_n$ and the the identity and prove that, for any  
$T=(T_1,\ldots, T_n)\in \overline{\cD^{pure}_{g^{-1}}(\cH)}$,  there is a unital completely contractive homomorphism 
$$
\Psi_T:\cA(g)\to B(\cH),\quad \Psi_T(q({\bf W})):=q(T),
$$
for any polynomial $q$  in $n$ noncommutative indeterminates.

In Section 2,  we introduce the noncommutative Hardy algebras $F^\infty(g)$ associated with  any
 admissible free holomorphic function  $g$ such that 
$$
\sup_{\alpha\in \FF_n^+} \frac{b_\alpha}{b_{\alpha g_i}}<\infty,\qquad i\in \{1,\ldots, n\}.
$$
If  ${\bf W}=(W_1,\ldots, W_n)$ is  the universal model of the noncommutative domain $\cD_{g^{-1}}$, 
  we show that  $F^\infty(g)$ is the sequential SOT-(resp. WOT-, w*-) closure of   polynomials in $W_1,\ldots, W_n$ and the identity,  and 
     each $\varphi({\bf W})\in F^\infty(g)$ has a unique   Fourier representation $\sum_{\alpha\in \FF_n^+} c_\alpha W_\alpha$.
 These results are necessary in order to  
   provide a $w^*$-continuous $F^\infty(g)$-functional calculus for pure (resp. completely non-coisometric) $n$-tuples of operators in the noncommutative domain $\cD_{g^{-1}}(\cH)$. More precisely, we show that if   $T\in \overline{\cD^{pure}_{g^{-1}}(\cH)}$ is completely non-coisometric with respect to $\cD_{g^{-1}}(\cH)$, then the completely contractive  linear map  $\Psi_T$, described above, 
 has a unique extension to a $w^*$-continuous  homomorphism
$\Psi_T:F^\infty(g)\to B(\cH)$ 
 which is  completely  contractive and  sequentially WOT-continuous (resp. SOT-continuous).
 Moreover,  if $\varphi({\bf W})\in F^\infty(g)$ has the Fourier representation $\sum_{\alpha\in \FF_n^+} c_\alpha W_\alpha$, then
 $$
 \Psi_T(\varphi({\bf W}))=\text{\rm SOT-} \lim_{N\to\infty}\sum_{s\in \NN, s\leq N} \left(1-\frac{s}{N+1}\right)\sum_{|\alpha |=s} c_\alpha T_\alpha.
 $$

Section 3  is devoted to the multipliers on the Hilbert space  
$$
F^2(g)=\left\{ \zeta=\sum_{\alpha\in \FF_n^+}c_\alpha Z_\alpha: \   \|\zeta\|_g^2:=\sum_{\alpha\in \FF_n^+}\frac{1}{b_\alpha} |c_\alpha|^2<\infty, \ c_\alpha\in \CC\right\}
$$
of formal power series in indeterminates $Z_1,\ldots, Z_n$ with orthogonal basis $\{Z_\alpha:\ \alpha\in \FF_n^+\}$ such that $\|Z_\alpha\|_g:=\frac{1}{\sqrt{b_\alpha}}$. 
In this setting, the {\it left multiplication} operators  $L_{Z_i}$ are  defined by  $L_{Z_i}\zeta:=Z_i\zeta$ for all $\zeta\in F^2(g)$.  Due to the fact that the $n$-tuple $L_{\bf Z}:=(L_{Z_1},\ldots,  L_{Z_n})$  is unitarily equivalent to the the universal model ${\bf W}:=(W_1,\ldots, W_n)$ associated with $g$, we prove that 
the left multiplier algebra $\cM^\ell(F^2(g)\otimes \cK)$ is completely isometrically  isomorphic  to the noncommutative Hardy algebra $F^\infty(g)\bar\otimes_{min} B(\cK)$. We also discuss the connection between the multipliers and  the multi-analytic operators on Fock spaces.

  In Section 4,  we introduce
  several classes of admissible free holomorphic functions  for  operator model theory and the associated noncommutative domains. The examples presented  here will be referred to throughout the paper. What stands out are the new classes of noncommutative domains which, as far as we know,  have not been studied before. 
  For example, the  scale of   {\it weighted noncommutative Bergman spaces}    $\{F^2(g_s):  s\in(0,\infty)\}$, where
  $$
g_s:=1+\sum_{k=1}^\infty\left(\begin{matrix} s+k-1 \\k \end{matrix}\right)(Z_1+\cdots  +Z_n)^k,
$$ 
and the associated domains $\cD_{g_s^{-1}}(\cH)$ were never considered and studied  in the  multivariable noncommutative setting, except the case when $s\in \NN$ (see \cite{Po-Berezin}).    We remark that, when $s\in (0,1]$,   the Hilbert  spaces $F^2(g_s)$  are  noncommutative generalizations of the classical   Besov-Sobolev spaces on the unit ball $\BB_n\subset \CC^n$ with representing kernel $\frac{1}{(1-\left<z,w\right>)^s}$, $z,w\in \BB_n$.
 Another example, is the scale of spaces $\{F^2(\xi_s):  s\in \RR\}$, where
 $$
\xi_s:=1+\sum_{\alpha\in \FF_n^+, |\alpha|\geq 1} (|\alpha|+1)^s Z_\alpha,
$$
 which 
 contains  the {\it noncommutative Dirichlet space}  $F^2(\xi_{-1})$ over the unit ball
 $[B(\cH)^n]_1$. When $s=0$, we recover again the full Fock space with $n$ generators.
 
Finally, if    $s\in [1,\infty)$ and  $\varphi=\sum_{|\alpha|\geq 1} d_\alpha Z_\alpha$ is a noncommutative polynomial such that $d_\alpha \geq 0$ and $d_{g_i}>0$ for $i\in \{1,\ldots, n\}$, then    the  formal power series  
$\psi_s =(1-\varphi)^{-s}$  is an admissible free holomorphic function.  The scale of noncommutative domains $\{\cD_{\psi_s^{-1}}:  s\in [1,\infty)\}$ contains the particular cases when $s\in \NN$, which were considered in \cite{Po-Berezin}.

In Section 5, we associate with each universal model ${\bf W}=(W_1,\ldots, W_n)$ the $C^*$-algebra  $C^*({\bf W})$ generated by $W_1,\ldots, W_n$ and identity. Under the assumption that  $\Delta_{g^{-1}}({\bf W},{\bf W}^*):=\sum_{k=0}^\infty\sum_{|\alpha|=k} a_\alpha W_\alpha W_\alpha^*$ is convergent in the operator norm topology, where $g^{-1}=1+\sum_{|\alpha|\geq 1} a_\alpha Z_\alpha$, condition that is satisfied by a large class of universal models,
we obtain   Wold  \cite{W} decompositions for the  unital $*$-representations of $C^*({\bf W})$.   More precisely, if  $\pi$ is     a unital
$*$-representation  of the $C^*$-algebra $C^*({\bf W})$  on a separable Hilbert
space  $\cK$ and
$V_i:=\pi(W_i)$, then the  {\it noncommutative Berezin kernel} ${ K_{g,V}}$ associated with $g$ and $V$    is a   partial isometry. Setting
$$
\cK^{(0)}:=\text{\rm range}\, K_{g,V}^* \ \text{ and }  \ \cK^{(1)}:= \ker K_{g,V},
$$
the orthogonal decomposition  
 $\cK=\cK^{(0)}\bigoplus \cK^{(1)}$  has  the following properties.
\begin{enumerate}
\item[(i)] $\cK^{(0)}$ and $ \cK^{(1)}$ are reducing  subspaces for each operator $V_i$.

\item[(ii)]  $V|_{\cK^{(0)}}:=(V_1|_{\cK^{(0)}},\ldots, V_n|_{\cK^{(0)}})$ is  a {\it pure tuple} in
 $\cD_{g^{-1}}(\cK^{(0)})$.

\item[(iii)]   $V|_{\cK^{(1)}}:=(V_1|_{\cK^{(1)}},\ldots, V_n|_{\cK^{(1)}})$  is a {\it Cuntz tuple} in
$\cD_{g^{-1}}(\cK^{(1)})$.

\end{enumerate}
We introduce 
  the algebra  $\cO(g)$  as the universal $C^*$-algebra  generated by $\pi(W_1),\ldots, \pi(W_n)$ and the identity, where 
  $\pi$ is a  {\it Cuntz type unital $*$-representation} of   $C^*({\bf W})$, i.e.
  $\Delta_{g^{-1}}(\pi({\bf W}),\pi({\bf W})^*)=0$.
    We prove that
   the sequence
of $C^*$-algebras
$$
0\to \boldsymbol\cK(F^2(H_n))\to C^*({\bf W})\to \cO(g)\to 0
$$
is exact,  where  $\boldsymbol\cK(F^2(H_n))$ stands for the ideal of all compact
operators  in $B(F^2(H_n))$.
This extends  the corresponding results obtained by Coburn  \cite{Co} and Cuntz \cite{Cu}. We should mention that  these results carry over
  to  $C^*$-algebras   $C^*(\pi({\bf W}))$, where $\pi:C^*({\bf W})\to B(\cK)$ is an arbitrary unital  $*$-representation on a Hilbert space $\cK$  which is not a Cuntz type $*$-representation.
 
 In a sequel  \cite{Po-Noncommutative domains II} to the present paper, we present a Beurling   type characterization  of the joint invariant subspaces of $W_1,\ldots, W_n$ and   develop
 a  dilation theory for not necessarily pure  $n$-tules of operators  in noncommutative domains associated with admissible free holomorphic functions  for operator model theory.     
  To what extent the results  of the present paper   and those from \cite{Po-Noncommutative domains II}  extend    to  noncommutative varieties  in non-regular noncommutative domains in $B(\cH)^n$ is discussed  in  a forthcoming paper.

\bigskip

\section{Universal operator  models associated with  free holomorphic functions}

In this section, we introduce the class of free holomorphic functions admissible for multi-variable 
operator model theory. Each such a function uniquely defines an $n$-tuple ${\bf W}=(W_1,\ldots, W_n)$
of weighted left creation operators acting on the full Fock space with $n$ generators, which turns out to be the universal model for a certain maximal noncommutative domain in $B(\cH)^n$. The operator theory of these noncommutative domains is developed throughout the paper.

 Let $H_n$ be an $n$-dimensional complex  Hilbert space with orthonormal
      basis
      $e_1,\dots,e_n$, where $n\in\NN$.        
      The full Fock space  of $H_n$ is defined by
      $$F^2(H_n):=\bigoplus_{k\geq 0} H_n^{\otimes k},$$
      where $H_n^{\otimes 0}:=\CC 1$ and $H_n^{\otimes k}$ is the (Hilbert)
      tensor product of $k$ copies of $H_n$.
      Define the {\it left creation
      operators} $S_i:F^2(H_n)\to F^2(H_n), \  i\in\{1,\dots, n\}$,  by
      $$
       S_i\varphi:=e_i\otimes\varphi, \quad  \varphi\in F^2(H_n),
      $$
      and  the {\it right creation operators}
      $R_i:F^2(H_n)\to F^2(H_n)$  by
      $
       R_i\varphi:=\varphi\otimes e_i$, \ $ \varphi\in F^2(H_n)$. The noncommutative disk algebra \cite{Po-von} is the norm-closed non-self-adjoint algebra generated by $S_1,\ldots, S_n$ and the identity.
       
Let $\FF_n^+$ be the unital free semigroup on $n$ generators
$g_1,\ldots, g_n$ and the identity $g_0$.  The length of $\alpha\in
\FF_n^+$ is defined by $|\alpha|:=0$ if $\alpha=g_0$  and
$|\alpha|:=k$ if
 $\alpha=g_{i_1}\cdots g_{i_k}$, where $i_1,\ldots, i_k\in \{1,\ldots, n\}$. We set  $e_\alpha:=e_{g_{i_1}}\otimes \cdots \otimes e_{g_{i_k}}$ and  $e_{g_0}=1$, and note that $\{e_\alpha\}_{\alpha\in \FF_n^+}$ is an othonormal basis for $F^2(H_n)$.
 Given a formal power series  $f:= \sum_{\alpha\in \FF_n^+} a_\alpha
Z_\alpha$, \ $a_\alpha\in \CC$, in indeterminates $Z_1,\ldots, Z_n$, we say that $f$ is
a {\it free holomorphic function} in a neighborhood of the origin if there is $\delta>0$ such that  the series
$\sum_{k=0}^\infty \sum_{|\alpha|=k} a_\alpha X_\alpha$ is convergent in norm  for any
$(X_1,\ldots, X_n)\in [B(\cH)^n]_\rho$ and any Hilbert space $\cH$,  where
$$[B(\cH)^n]_\rho:=\{(X_1,\ldots, X_n)\in B(\cH)^n: \
\|X_1X_1^*+\cdots + X_nX_n^*\|^{1/2}\leq \rho\}.
$$

\begin{lemma}  \label{lem1}  A formal power series  $g:= 1+\sum_{ |\alpha|\geq 1} b_\alpha
Z_\alpha$   is  a free holomorphic function in a neighborhood of the origin if and only if there is  a 
free holomorphic function  $f=1+\sum_{ |\gamma|\geq 1} a_\gamma
Z_\gamma$ in a neighborhood of the origin such that $fg=gf=1$. In this case, we have 
\begin{equation*} 
b_\alpha= \sum_{j=1}^{|\alpha|}
\sum_{{\gamma_1\cdots \gamma_j=\alpha }\atop {|\gamma_1|\geq
1,\ldots, |\gamma_j|\geq 1}}  (-1)^ja_{\gamma_1}\cdots a_{\gamma_j}   \quad
\text{ and  } \quad a_\alpha= \sum_{j=1}^{|\alpha|}
\sum_{{\gamma_1\cdots \gamma_j=\alpha }\atop {|\gamma_1|\geq
1,\ldots, |\gamma_j|\geq 1}}  (-1)^jb_{\gamma_1}\cdots b_{\gamma_j}
\end{equation*}
for every  $\alpha\in \FF_n^+$ with $ |\alpha|\geq 1$.
\end{lemma}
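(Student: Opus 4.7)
The argument splits into an algebraic and an analytic step, and since the statement is symmetric in $f$ and $g$, it suffices to prove one implication: if $g$ is free holomorphic in a neighborhood of the origin, then the formal series $f$ given by the second displayed formula is free holomorphic in a (possibly smaller) neighborhood and satisfies $fg = gf = 1$. The converse, along with the companion formula for $b_\alpha$, follows by swapping the roles of $f$ and $g$.

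\emph{Formal step.} Write $h := g - 1$. In the ring of formal noncommutative power series the element $1 + h$ is a two-sided unit with inverse $\sum_{j\geq 0}(-h)^j$. Expanding $h^j$ as a sum over ordered tuples $(\gamma_1,\ldots,\gamma_j)$ of words with each $|\gamma_i|\geq 1$, and collecting those contributions whose concatenation equals a given $\alpha$, yields exactly the stated closed form for $a_\alpha$; the outer sum terminates at $j=|\alpha|$ because every factor has length at least $1$. The identities $fg = gf = 1$ are then automatic from $f = (1+h)^{-1}$, equivalently recorded as the recurrence $a_\alpha + b_\alpha + \sum_{\alpha = \gamma\beta,\ |\gamma|,|\beta|\geq 1} a_\gamma b_\beta = 0$ for $|\alpha|\geq 1$.

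\emph{Analytic step.} The key tool is the noncommutative von Neumann inequality, which for any row contraction $X \in [B(\cH)^n]_r$ and scalars $\{c_\alpha\}_{|\alpha|=k}$ yields
\[
\Bigl\|\sum_{|\alpha|=k} c_\alpha X_\alpha\Bigr\| \;\leq\; r^{k}\,\Bigl\|\sum_{|\alpha|=k} c_\alpha S_\alpha\Bigr\|.
\]
Set $B_k := \bigl\|\sum_{|\alpha|=k} b_\alpha S_\alpha\bigr\|$. Since $g$ is free holomorphic on $[B(\cH)^n]_\delta$, the scalar majorant $M(z) := \sum_{k\geq 1} B_k z^k$ has radius of convergence at least $\delta$; because $M(0) = 0$, one can choose $\delta_0 \in (0,\delta)$ with $M(\delta_0) < 1$, so that $z \mapsto M(z)/(1 - M(z))$ is analytic on $|z| < \delta_0$ with Taylor coefficients $\widetilde{A}_k$ of geometric decay. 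The closed form for $a_\alpha$ and the submultiplicativity of the operator norm on the Fock space give
\[
\Bigl\|\sum_{|\alpha|=k} a_\alpha S_\alpha\Bigr\| \;\leq\; \sum_{j=1}^{k}\,\sum_{\substack{k_1+\cdots+k_j=k\\ k_i\geq 1}} B_{k_1}\cdots B_{k_j} \;=\; \widetilde{A}_k.
\]
A second application of the von Neumann inequality then shows that $\sum_k\bigl\|\sum_{|\alpha|=k} a_\alpha X_\alpha\bigr\|$ is dominated by a convergent geometric series on $[B(\cH)^n]_r$ for all sufficiently small $r > 0$, so $f$ is free holomorphic there.

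The hard part is this last step. The naive Neumann expansion $g(X)^{-1} = \sum_j(-h(X))^j$ converges in the summation index $j$, whereas the definition of a free holomorphic function demands convergence after regrouping by the homogeneous monomial degree $k$. Passing through the commutative scalar majorant $M(z)$ decouples these two summations and is precisely what makes the regrouping by degree rigorous.
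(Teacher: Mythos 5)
Your proposal is correct, and the algebraic half (Neumann inversion in the formal power series ring, yielding the closed formulas and the recurrence $a_\alpha+b_\alpha+\sum_{\gamma\beta=\alpha,\,|\gamma|,|\beta|\geq 1}a_\gamma b_\beta=0$) is the same as the paper's. Where you genuinely diverge is in the analytic step. The paper works inside the noncommutative disk algebra: it inverts $g(rS_1,\ldots,rS_n)$ for a suitable $r$ via the operator Neumann series, identifies the Fourier coefficients of the inverse through $r^{|\alpha|}a_\alpha=P_{\CC 1}S_\alpha^*g(rS)^{-1}(1)$ and the uniqueness of Fourier representations in $\cA_n$, and then invokes the norm convergence of the radially damped Fourier series at a smaller radius $r_0<r$ before applying the von Neumann inequality. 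You instead bound $\|\sum_{|\alpha|=k}a_\alpha S_\alpha\|$ directly from the closed formula by the scalar majorant $M/(1-M)$, using only submultiplicativity of the norm and the identity $\sum_{|\alpha|=k}\sum_{\gamma_1\cdots\gamma_j=\alpha}b_{\gamma_1}\cdots b_{\gamma_j}S_\alpha=\sum_{k_1+\cdots+k_j=k}\prod_i\bigl(\sum_{|\gamma_i|=k_i}b_{\gamma_i}S_{\gamma_i}\bigr)$, and only then apply the von Neumann inequality. Your route is more elementary and self-contained (it does not rely on the structure theory of $\cA_n$, in particular not on uniqueness or summability of Fourier representations), at the cost of being slightly less informative about where $f$ lives (the paper's argument places $g(rS)^{-1}$ explicitly in the disk algebra). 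Two cosmetic points: an element of $[B(\cH)^n]_r$ is a row contraction only after rescaling by $r$, which is what your displayed inequality implicitly does; and the Taylor coefficients $\widetilde A_k$ grow at most geometrically rather than decay when $\delta_0<1$ — what you actually need, and do have, is $\sum_k\widetilde A_k r^k<\infty$ for $r<\delta_0$. Neither affects the validity of the argument.
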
 
\begin{proof}  The case when $g=1$ is trivial. Assume that $g$ is  a non-constant  free holomorphic function in a neighborhood of the origin. According to \cite{Po-holomorphic},   $
\limsup_{k\to\infty} \left( \sum_{|\alpha|=k}
|b_\alpha|^2\right)^{1/2k}<\infty 
$  
and there is $r\in(0,1)$ such that   
 $
 \varphi(rS_1,\ldots, rS_n):=\sum_{p=1}^\infty\sum_{ |\alpha|= p} r^{|\alpha|}b_\alpha S_\alpha
 $  is convergent in the operator norm topology and  $\| \varphi(rS_1,\ldots, rS_n)\|<1$.
Therefore, the operator $g(rS_1,\ldots, rS_n)=I+\varphi(rS_1,\ldots, rS_n)$ is invertible, different   from the identity, and 
its inverse $$g(rS_1,\ldots, rS_n)^{-1}= I- \varphi(rS_1,\ldots, rS_n)+ \varphi(rS_1,\ldots, rS_n)^2-\cdots $$
is  in the noncommutative disk algebra $\cA_n$. Moreover,  $g(rS_1,\ldots, rS_n)^{-1}$
  has  a unique  ``Fourier representation''
$  \sum_{\alpha\in \FF_n^+} a_\alpha
r^{|\alpha|} S_\alpha $ for some constants $a_\alpha\in \CC$. Consequently,
  using the fact that $r^{|\alpha|}a_\alpha=P_{\CC 1} S_\alpha^*
g(rS_1,\ldots, rS_n)^{-1}(1)$, we  deduce that
 $g(rS_1,\ldots, rS_n)^{-1}$ has the Fourier representation 
$$ I+\sum_{m=1}^\infty \sum_{|\alpha|=m}\left(\sum_{j=1}^{|\alpha|}
\sum_{{\gamma_1\cdots \gamma_j=\alpha }\atop {|\gamma_1|\geq 1,\ldots,
 |\gamma_j|\geq 1}} (-1)^jb_{\gamma_1}\cdots b_{\gamma_j} \right) r^{|\alpha|}
  S_\alpha.
$$
Due to the uniqueness of the Fourier representation of the elements
in $\cA_n$,  we have
\begin{equation*}
a_{g_0}=1 \quad \text{ and }\quad a_\alpha= \sum_{j=1}^{|\alpha|}
\sum_{{\gamma_1\cdots \gamma_j=\alpha }\atop {|\gamma_1|\geq
1,\ldots, |\gamma_j|\geq 1}}(-1)^j b_{\gamma_1}\cdots b_{\gamma_j}   \quad
\text{ if } \ |\alpha|\geq 1.
\end{equation*}
Moreover, taking $0<r_0<r$, we have 
$g(r_0S_1,\ldots, r_0S_n)^{-1}=1+\sum_{m=1}^\infty \sum_{|\alpha|=m} a_\alpha r_0^{|\alpha|} S_\alpha
$
where the convergence is  in the operator norm topology.
Due to the noncommutative von Neumann inequality \cite{Po-von}, we deduce that 
  $f:=1+\sum_{ |\gamma|\geq 1} a_\gamma
Z_\gamma$   is  a 
free holomorphic function in  the noncommutative ball $[B(\cH)^n]_{r_0}$  and, taking into account  the relations above, we also have $fg=gf=1$.  The converse can be proved in a similar manner.
The proof is complete.
\end{proof}

Let $\{b_\alpha\}_{\alpha\in \FF_n^+}$ be  a collection of strictly positive  real numbers such that
\begin{equation*}
b_{g_0}=1\quad \text{ and }  \quad
\sup_{\alpha\in \FF_n^+} \frac{b_\alpha}{b_{g_i \alpha}}<\infty,\qquad i\in \{1,\ldots, n\},
\end{equation*}
and let  $g= 1+\sum_{ |\alpha|\geq 1} b_\alpha
Z_\alpha$ be the  associated formal power series.
We  define the {\it weighted left creation  operators}
$W_i:F^2(H_n)\to F^2(H_n)$, $i\in \{1,\ldots, n\}$,  associated with  $g$     by setting $W_i:=S_iD_i$, where
 $S_1,\ldots, S_n$ are the left creation operators on the full
 Fock space $F^2(H_n)$ and the diagonal operators $D_i:F^2(H_n)\to F^2(H_n)$,
 are defined by setting
$$
D_ie_\alpha=\sqrt{\frac{b_\alpha}{b_{g_i \alpha}}} e_\alpha,\qquad
 \alpha\in \FF_n^+,
$$
where  $\{e_\alpha\}_{\alpha\in \FF_n^+}$ is the othonormal basis for $F^2(H_n)$.
If   $\{b_\alpha^\prime\}_{\alpha\in \FF_n^+}$  is another sequence  of positive real  numbers  with similar properties as $\{b_\alpha\}_{\alpha\in \FF_n^+}$ and   $(W_1',\ldots,
W_n')$ are  the associated  weighted left creation operators, then
there exists a unitary operator $U\in B(F^2(H_n))$ such that
$
UW_i=W_i'U,\  i\in\{1,\ldots, n\},
$
if and only if $b_\alpha=b_\alpha^\prime$ for any $\alpha\in \FF^+$. Since the proof is similar to that of Theorem 1.2 from \cite{Po-domains}, we omit it.

\begin{definition}  Let $T=(T_1,\ldots, T_n)\in B(\cH)^n$.
If there is    a Hilbert space $\cE$ and an isometric operator $V:\cH\to F^2(H_n)\otimes \cE$ such that 
$$
VT_i^*=(W_i^*\otimes I_\cE)V,\qquad i\in \{1,\ldots,n\},
$$ 
we say that ${\bf W}=(W_1,\ldots, W_n)$ is a {\it universal model} for $T$ . 
\end{definition}

\begin{theorem} \label{intertwining}
Let $\{b_\alpha\}_{\alpha\in \FF_n^+}$ be  a collection of strictly positive  real numbers such that
\begin{equation*}
b_{g_0}=1\quad \text{ and }  \quad
\sup_{\alpha\in \FF_n^+} \frac{b_\alpha}{b_{g_i \alpha}}<\infty,\qquad i\in \{1,\ldots, n\},
\end{equation*}
and let ${\bf W}=(W_1,\ldots, W_n)$ be the associated $n$-tuple of weighted left creation operators on the full Fock space $F^2(H_n)$. If  $T=(T_1,\ldots, T_n)\in B(\cH)^n$,  then the following statements are equivalent.
\begin{enumerate}
\item[(i)]
There is    a Hilbert space $\cE$ and an operator $V:\cH\to F^2(H_n)\otimes \cE$ such that 
$$
VT_i^*=(W_i^*\otimes I_\cE)V,\qquad i\in \{1,\ldots,n\}.
$$
\item[(ii)] There is a  Hilbert space $\cE$  and  an operator $\Theta_0\in B(\cH, \cE)$, 
$\Theta_0\neq 0$,  such that the series 
$$
\sum_{\alpha\in \FF_n^+} b_\alpha T_\alpha \Theta_0^*\Theta_0 T_\alpha^*
$$
converges in the weak operator topology.
\item[(iii)]
There is a positive operator $D(T)\in B(\cH)$ such that the series 
$$
\sum_{\alpha\in \FF_n^+} b_\alpha T_\alpha D(T) T_\alpha^*
$$
converges in the weak operator topology.
\end{enumerate}
If condition (iii) holds,    then the operator  $\widetilde V:\cH\to F^2(H_n)\otimes \cD$  defined  by setting
$$
\widetilde Vh:=\sum_{\alpha\in \FF_n^+} \sqrt{b_\alpha} e_\alpha\otimes D(T)^{1/2} T_\alpha^*h,\qquad h\in \cH,
$$
where $\cD:=\overline{D(T) \cH}$, satisfies the relation
$
\widetilde VT_i^*=(W_i^*\otimes I_\cD)\widetilde V$,  $i\in \{1,\ldots,n\}.
$
In addition, $\widetilde V$ is an isometry if and only if 
$$
\sum_{\alpha\in \FF_n^+} b_\alpha T_\alpha D(T) T_\alpha^*=I_\cH.
$$
where the convergence is  in the weak operator topology.
\end{theorem}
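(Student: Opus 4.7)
My plan is to prove the equivalences via the cycle (iii) $\Rightarrow$ (i) $\Rightarrow$ (ii) $\Rightarrow$ (iii), with the explicit construction of $\widetilde V$ and its isometry criterion bundled into the first implication. The single computational ingredient used throughout is the action of $W_i^*$ on the orthonormal basis of $F^2(H_n)$: since $W_i = S_i D_i$, one has $W_i^* e_{g_i \beta} = \sqrt{b_\beta / b_{g_i \beta}}\, e_\beta$ and $W_i^* e_\alpha = 0$ whenever $\alpha$ does not begin with $g_i$. Everything is then driven mechanically from this identity together with the semigroup convention $T_\alpha = T_{i_1}\cdots T_{i_k}$ for $\alpha = g_{i_1}\cdots g_{i_k}$, so that $T_{g_i \beta} = T_i T_\beta$.

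For (iii) $\Rightarrow$ (i) together with the description of $\widetilde V$, I would first verify convergence by computing
$$
\|\widetilde V h\|^2 = \sum_{\alpha \in \FF_n^+} b_\alpha \, \|D(T)^{1/2} T_\alpha^* h\|^2 = \left\langle \biggl(\sum_{\alpha \in \FF_n^+} b_\alpha T_\alpha D(T) T_\alpha^* \biggr) h,\, h \right\rangle,
$$
which is finite by assumption (iii). This simultaneously shows $\widetilde V^* \widetilde V = \sum_\alpha b_\alpha T_\alpha D(T) T_\alpha^*$, so that $\widetilde V$ is an isometry if and only if this sum equals $I_\cH$. The intertwining $\widetilde V T_i^* = (W_i^* \otimes I_\cD)\widetilde V$ is then a direct calculation: applying the action of $W_i^*$ to each basis vector and reindexing $\alpha = g_i \beta$ yields
$$
(W_i^* \otimes I_\cD) \widetilde V h = \sum_{\beta \in \FF_n^+} \sqrt{b_\beta}\, e_\beta \otimes D(T)^{1/2} T_\beta^* T_i^* h = \widetilde V T_i^* h.
$$

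For (i) $\Rightarrow$ (ii), I would expand $V h = \sum_\alpha e_\alpha \otimes \Theta_\alpha h$ with bounded coefficient operators $\Theta_\alpha \in B(\cH, \cE)$ (boundedness from $\|\Theta_\alpha h\| \leq \|Vh\| \leq \|V\|\|h\|$), and set $\Theta_0 := \Theta_{g_0}$. Comparing $e_\alpha$-coefficients on both sides of $V T_i^* h = (W_i^* \otimes I_\cE) V h$ produces the recursion $\Theta_{g_i \alpha} = \sqrt{b_{g_i \alpha}/b_\alpha}\, \Theta_\alpha T_i^*$, which unwinds by induction on $|\alpha|$ to the closed form $\Theta_\alpha = \sqrt{b_\alpha}\, \Theta_0 T_\alpha^*$. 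Then $\|V h\|^2 = \sum_\alpha b_\alpha \langle T_\alpha \Theta_0^* \Theta_0 T_\alpha^* h,\, h\rangle$ yields WOT-convergence, uniformly bounded by $\|V\|^2 I_\cH$, and $\Theta_0 \neq 0$ provided $V \neq 0$ (the trivial case $V = 0$ being implicitly excluded in (i)). Finally, (ii) $\Rightarrow$ (iii) is immediate by setting $D(T) := \Theta_0^* \Theta_0$, which closes the cycle. The only real place where care is needed is checking that the recursion telescopes in the correct direction so as to match the convention $T_\alpha = T_{i_1}\cdots T_{i_k}$; this is the main, though modest, bookkeeping obstacle.
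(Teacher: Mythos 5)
Your proposal is correct and follows essentially the same route as the paper: the implication (i)\,$\Rightarrow$\,(ii) via the coefficient operators $\Theta_{(\alpha)}$ and the recursion $\Theta_{(g_i\beta)}=\sqrt{b_{g_i\beta}/b_\beta}\,\Theta_{(\beta)}T_i^*$ unwinding to $\Theta_{(\alpha)}=\sqrt{b_\alpha}\,\Theta_0T_\alpha^*$, the trivial step (ii)\,$\Rightarrow$\,(iii) with $D(T):=\Theta_0^*\Theta_0$, and the explicit construction of $\widetilde V$ with the identity $\widetilde V^*\widetilde V=\sum_\alpha b_\alpha T_\alpha D(T)T_\alpha^*$ are all exactly the paper's argument, merely arranged in a different cyclic order. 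Your explicit flagging of the trivial case $V=0$ in (i)\,$\Rightarrow$\,(ii) is a minor point of care that the paper's own proof glosses over.
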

\begin{proof}
Assume that condition (i) holds. For each $\alpha\in \FF_n^+$,  define  the bounded  operator 
$\Theta_{(\alpha)}:=uP_{e_\alpha\otimes \cE} V$,  where $u: e_\alpha\otimes \cE\to \cE$ is the unitary operator given by $u(e_\alpha\otimes x):=x$, $x\in \cE$. It is clear that 
$Vh=\sum_{\alpha\in \FF_n^+} e_\alpha\otimes \Theta_{(\alpha)} h$  for all $h\in \cH$ and 
\begin{equation}\label{bound}
\sum_{\alpha\in \FF_n^+} \Theta_{(\alpha)}^*\Theta_{(\alpha)}\leq \|V\|^2 I.
\end{equation}
For each $i\in \{1,\ldots, n\}$, we have
$$
VT_i^*h=\sum_{\beta\in \FF_n^+} e_\beta\otimes \Theta_{(\beta)} T_i^*h, \qquad h\in \cH,
$$
and, due to the definition of the weighted shift $W_i$, we have  
\begin{equation*}
\begin{split}
(W_i^*\otimes I_\cE)Vh&= (W_i^*\otimes I_\cE)\left(\sum_{\beta\in \FF_n^+} e_{g_i\beta} \otimes \Theta_{(g_i\beta)}h\right)
=\sum_{\beta\in \FF_n^+}  \sqrt{\frac{b_\beta}{b_{g_i \beta}}} e_\beta\otimes \Theta_{(g_i\beta)}h.
\end{split}
\end{equation*}
Since $VT_i^*=(W_i^*\otimes I_\cE)V$, we deduce that
\begin{equation}\label{TT}
\frac{1}{\sqrt{b_\beta}} \Theta_{(\beta)} T_i^*h=\frac{1}{\sqrt{b_{g_i\beta}}} \Theta_{(g_i\beta)}
\end{equation}
for any $\beta\in \FF_n^+$ and any $i\in \{1,\ldots, n\}$.
Let $\gamma=g_{i_1}\cdots g_{i_k}\in \FF_n^+$. Using relation \eqref{TT} repeatedly, we obtain 
\begin{equation*}
\begin{split}
\frac{1}{\sqrt{b_{g_{i_2}\cdots g_{i_k}}}} \Theta_{(g_{i_2}\cdots g_{i_k})} T_{i_1}^*h
&=\frac{1}{\sqrt{b_{g_{i_1}\cdots g_{i_k}}}} \Theta_{(g_{i_1}\cdots g_{i_k})}\\
\frac{1}{\sqrt{b_{g_{i_3}\cdots g_{i_k}}}} \Theta_{(g_{i_3}\cdots g_{i_k})} T_{i_2}^*T_{i_1}^*h
&=\frac{1}{\sqrt{b_{g_{i_2}\cdots g_{i_k}}}} \Theta_{(g_{i_2}\cdots g_{i_k})}T_{i_1}^*h\\
&\  \ \vdots\\
\frac{1}{\sqrt{b_{g_0}}} \Theta_{(g_0)} T_{i_k}^*\cdots T_{i_1}^*h &=\frac{1}{\sqrt{b_{g_k}}} \Theta_{(g_k)}T_{i_{k-1}}^*\cdots T_{i_1}^*h.
\end{split}
\end{equation*}
Using these relations and taking into account that $b_{g_0}=1$, we deduce that
$$
\Theta_{(g_0)} T_{i_k}^*\cdots T_{i_1}^*h=\frac{1}{\sqrt{b_{g_{i_1}\cdots g_{i_k}}}} \Theta_{(g_{i_1}\cdots g_{i_k})}
$$
for any $\gamma=g_{i_1}\cdots g_{i_k}\in \FF_n^+$. Hence, we have
$
\Theta_{(\gamma)}=\sqrt{b_\gamma} \Theta_0 T_\gamma^*,\  \gamma\in \FF_n^+,
$
where $\Theta_0:=\Theta_{(g_0)}$, and relation \eqref{bound} shows that
the series 
$
\sum_{\alpha\in \FF_n^+} b_\alpha T_\alpha \Theta_0^*\Theta_0 T_\alpha^*
$
converges in the weak operator topology. Therefore, item (ii) holds.
If item (ii) holds, then  relation (iii) follows if we take  $D(T):=\Theta_0^* \Theta_0$.

Now, assume the item (iii) holds .
Set $\cD:=\overline{D\cH}$ and define 
the operator  $\widetilde V:\cH\to F^2(H_n)\otimes \cD$   by  relation
$$
 \widetilde Vh:=\sum_{\alpha\in \FF_n^+} \sqrt{b_\alpha} e_\alpha\otimes D(T)T_\alpha^*h,\qquad h\in \cH.
$$
Note that  $\widetilde V$ is a bounded operator and
 $
\widetilde VT_i^*=(W_i^*\otimes I_\cD)\widetilde V,\  i\in \{1,\ldots,n\}.
$
Indeed, we have 
$$
\left< \widetilde VT_i^*h, e_\beta\otimes h'\right>=\sqrt{b_\beta}\left<DT_\beta^* T_i^*h, h'\right>,\qquad h,h'\in \cH, \beta\in \FF_n^+,
$$
and 
\begin{equation*}
\begin{split}
\left< (W_i^*\otimes I_\cD)\widetilde Vh, e_\beta\otimes h'\right>&= \left< \widetilde Vh, (W_i e_\beta\otimes h'\right>\\
&=
\left<\sqrt{b_{g_i\beta}}e_{g_i\beta}\otimes DT_{g_i \beta}^*h, \frac{\sqrt{b_\beta}}{\sqrt{b_{g_i\beta}}}
e_{g_i\beta}\otimes h'\right>
=\sqrt{b_\beta}\left<DT_{g_i\beta}h, h'\right>
\end{split}
\end{equation*}
for any $h,h'\in \cH$, $\beta\in \FF_n^+$, which proves our assertion.
Consequently, item (i) holds. On the other hand, since 
$\widetilde V \widetilde V^*=
\sum_{\alpha\in \FF_n^+} b_\alpha T_\alpha D(T) T_\alpha^*,
$
it is clear that the last part of the theorem follows.
 The proof is completed. 
\end{proof}

The questions that  we would like to answer are the following. Under what conditions on the weights  $\{b_\alpha\}_{\alpha\in \FF_n^+}$,  is the associated $n$-tuple 
${\bf W}=(W_1,\ldots, W_n)$   of weighted left creation operators on the full Fock space $F^2(H_n)$   a
{\it universal model } for the $n$-tuples  of operators in a noncommutative set  $\cD(\cH)\subset B(\cH)^n$ which contains a ball $[B(\cH)^n]_\epsilon$  for some $\epsilon>0$ $?$  What is the maximal noncommutative set $\cD(\cH)$ so that each $n$-tuple $T\in \cD(\cH)$ has ${\bf W}$ as universal model $?$

According to Theorem \ref{intertwining}, we need to impose  appropriate conditions on  $\{b_\alpha\}_{\alpha\in \FF_n^+}$ so that, for each $T\in [B(\cH)^n]_\epsilon$,  there is a positive operator $D(T)\in B(\cH)$ such that
$$
\sum_{\alpha\in \FF_n^+} b_\alpha T_\alpha D(T) T_\alpha^*=I_\cH,
$$
where the convergence is  in the weak operator topology.
Note that in the scalar case when $\cH=\CC$ and  $T=\lambda:=(\lambda_1,\ldots, \lambda_n)\in [\CC^n]_\epsilon$, we should have
\begin{equation}\label{scalar}
\left(\sum_{\alpha\in \FF_n^+} b_\alpha |\lambda_\alpha|^2\right)D(\lambda)=1.
\end{equation}
Consequently, the series  $\sum_{\alpha\in \FF_n^+} b_\alpha |\lambda_\alpha|^2$ needs to be convergent for any $\lambda \in [\CC^n]_\epsilon$ and $D(\lambda)$ must be  its inverse.
This leads us to impose on the coefficients $\{b_\alpha\}_{\alpha\in \FF_n^+}$ the condition that 
$g:=\sum_{k=0}^\infty\sum_{|\alpha|=k} b_\alpha Z_\alpha$  be a free holomorphic function on  $[B(\cH)^n]_\epsilon$. According to   Lemma \ref{lem1}, the later condition implies  that  $g^{-1}=1+\sum_{ |\gamma|\geq 1} a_\gamma
Z_\gamma$  is also a free holomorphic function in a neighborhood of the origin.
In order to match the scalar  relation  \eqref{scalar} in the noncommutative setting, it seems reasonable to define 
$D(T):=\sum_{p=0}^\infty\sum_{ |\alpha|= p} a_\alpha T_\alpha T_\alpha^*$ as long as the series is convergent in the weak operator topology,  $D(T)\geq 0$,  and 
$$
\sum_{\alpha\in \FF_n^+} b_\alpha T_\alpha D(T)T_\alpha^*=I_\cH,
$$
where the convergence is  in the weak operator topology.   We  prove later (see Proposition \ref{lemm2}) that  all the  conditions above, concerning the operator  $D(T)$,  hold true for any $T\in [B(\cH)^n]_\epsilon$ and some   
$\epsilon>0$.
On the other hand, since Theorem \ref{intertwining} holds when $T$ is equal to the $n$-tuple ${\bf W}=(W_1,\ldots, W_n)$, one expects  to require that the above conditions on the existence of $D(T)$ be satisfied when  $T$ is replaced by ${\bf W}$.

Summing up these considerations,  we are led to  consider the following conditions on the weights  $\{b_\alpha\}_{\alpha\in \FF_n^+}$ and on the associated formal power series  $g:= 1+\sum_{ |\alpha|\geq 1} b_\alpha
Z_\alpha$.
\begin{enumerate}
\item[(a)] The collection of weights $\{b_\alpha\}_{\alpha\in \FF_n^+}$  consists  of strictly positive  real numbers such that
\begin{equation*}
b_{g_0}=1\quad \text{ and }  \quad
\sup_{\alpha\in \FF_n^+} \frac{b_\alpha}{b_{g_i \alpha}}<\infty,\qquad i\in \{1,\ldots, n\}.
\end{equation*}
\item[(b)] The formal power series $g:= 1+\sum_{ |\alpha|\geq 1} b_\alpha
Z_\alpha$  is  a free holomorphic function in a neighborhood of the origin, which is  equivalent  to the relation
$
\limsup_{k\to\infty} \left( \sum_{|\alpha|=k}
b_\alpha^2\right)^{1/2k}<\infty. 
$    

\item[(c)]  The condition 
\begin{equation} \label{ubound}
\sup_{k\in \NN}\left\|\sum_{p=0}^k\sum_{ |\alpha|= p} a_\alpha W_\alpha W_\alpha^*\right\|<\infty
\end{equation}
holds, where  $g^{-1}=1+\sum_{|\alpha|\geq 1} a_\alpha Z_\alpha $ is the inverse of $g$ and ${\bf W}=(W_1,\ldots, W_n)$ is the $n$-tuple of   the  weighted left creation operators associated with $\{b_\alpha\}_{\alpha\in \FF_n^+}$.
\end{enumerate}

 Note that if $g^{-1}$ is a noncommutative polynomial the condition (c)   is automatically satisfied.
 In what follows, we need several definitions.
\begin{definition}
A free holomorphic function  $g= 1+\sum_{ |\alpha|\geq 1} b_\alpha
Z_\alpha$ satisfying the conditions $(a), (b)$, and $(c)$ is  said to be  {\it admissible} for  operator model theory, and the $n$-tuple ${\bf W}=(W_1,\ldots, W_n)$ of weighted left creation operators is called universal model  associated with $g$.
\end{definition}

\begin{definition}
 Let  $g= 1+\sum_{ |\alpha|\geq 1} b_\alpha
Z_\alpha$ be  a free holomorphic function in a neighborhood of the origin, with $b_\alpha>0$, if $|\alpha|\geq 1$, and with   inverse  $g^{-1}=1+\sum_{ |\gamma|\geq 1} a_\gamma
Z_\gamma$.
The noncommutative   set  $\cD_{g^{-1}}(\cH)$ is defined as the set  of all $n$-tuples  of operators
$X=(X_1,\ldots, X_n) \in B(\cH)^n$ satisfying the following conditions.
\begin{enumerate}
\item[(i)]  $\Delta_{g^{-1}}(X,X^*)\geq 0$, where
$
\Delta_{g^{-1}}(X,X^*):= \sum_{p=0}^\infty\sum_{ |\alpha|= p} a_\alpha  X_\alpha X_\alpha^*
$
is convergent in the   weak operator topology. 
\item[(ii)]  
$\sum_{\alpha \in \FF_n^+} b_\alpha X_\alpha \Delta_{g^{-1}}(X,X^*) X_\alpha^*\leq I$, where the convergence is in the weak operator topology.
\end{enumerate}
\end{definition}

\begin{definition} Let  $X\in \cD_{g^{-1}}(\cH)$.
\begin{enumerate}
\item[(i)]  $X$  is said to be a {\it Cuntz tuple} in 
$\cD_{g^{-1}}(\cH)$ if  $\Delta_{g^{-1}}(X,X^*)=0$.
\item[(ii)]  $X$ is said to be a {\it pure}  element in  
$\cD_{g^{-1}}(\cH)$,
if 
$
\sum_{\alpha \in \FF_n^+} b_\alpha X_\alpha \Delta_{g^{-1}}(X,X^*) X_\alpha^*=I.
$
 \item[(iii)]  
 The {\it pure part} of   $\cD_{g^{-1}}(\cH)$ is defined by setting
$$
\cD^{pure}_{g^{-1}}(\cH):=\{X\in \cD_{g^{-1}}(\cH): X  \text{ is pure}\}.
$$
\end{enumerate}
\end{definition}

\begin{definition}  
We say that $X=(X_1,\ldots, X_n)\in B(\cH)^n$ is  a {\it radial}  $n$-tuple with respect to the set $\cD_{g^{-1}}(\cH)$ if 
  there is  $\delta\in (0,1)$ such  that $rX:=(rX_1,\ldots, rX_n)\in \cD_{g^{-1}}(\cH)$ for every $r\in (\delta, 1)$.  If, in addition, $rX$ is pure for every $r\in (\delta, 1)$, we say that $X$ is  a radially pure $n$-tuple with respect to  $\cD_{g^{-1}}(\cH)$. The noncommutative set $ \cD_{g^{-1}}(\cH)$ is called  {\it radially pure} domain if   any $X\in \cD_{g^{-1}}(\cH)$ is  a radially pure $n$-tuple.
  \end{definition}

Let $g$ be an admissible  free holomorphic function.
In what follows, we will show that
\begin{enumerate}
\item[(i)] the conditions  (a) and (c) are necessary  to guarantee that ${\bf W}=(W_1,\ldots, W_n)$ is a pure $n$-tuple  of bounded operators in the noncommutative set $\cD_{g^{-1}}(F^2(H_n))$;
\item[(ii)] the condition (b) ensures that there is  $\epsilon>0$ such that the noncommutative  ball
$[B(\cH)^n]_\epsilon$ is included in $\cD_{g^{-1}}(\cH)$ for any Hilbert space $\cH$.
\end{enumerate}

We remark that the 
 relation  
$fg=gf=1$, in Lemma \ref{lem1},
implies  
\begin{equation}\label{sum1}
b_\gamma+\sum_{\beta \alpha=\gamma, |\beta|\geq 1}a_\beta b_\alpha=0
\quad \text{ if } \ |\gamma|\geq 1
\end{equation}
and 
 \begin{equation}\label{sum2}
b_\gamma+\sum_{ \alpha\beta=\gamma, |\beta|\geq 1}a_\beta b_\alpha=0
 \quad \text{ if } \ |\gamma|\geq 1.
\end{equation}

\begin{proposition}   \label{W}  Let $g= 1+\sum_{ |\alpha|\geq 1} b_\alpha
Z_\alpha$ be an admissible free holomorphic function with inverse $g^{-1}=1+\sum_{ |\gamma|\geq 1} a_\gamma
Z_\gamma$ and let ${\bf W}=(W_1,\ldots, W_n)$ be  the coresponding  weighted left creation operators associated with $g$.
Then 
$$
\Delta_{g^{-1}}({\bf W},{\bf W}^*):=\sum_{p=0}^\infty\sum_{ |\alpha|= p} a_\alpha  W_\alpha W_\alpha^*
$$
converges in the  strong operator topology and  $\Delta_{g^{-1}}({\bf W},{\bf W}^*)=P_{\CC 1}$, the orthogonal projection of $F^2(H_n)$ onto the constants $\CC 1$. Moreover, we have
$$
\sum_{\alpha \in \FF_n^+} b_\alpha W_\alpha \Delta_{g^{-1}}({\bf W},{\bf W}^*) W_\alpha^*=I,
$$
where the convergence is in the strong operator topology.
\end{proposition}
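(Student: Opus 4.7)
The plan is to reduce both identities to explicit computations in the orthonormal basis $\{e_\beta\}_{\beta\in\FF_n^+}$ of $F^2(H_n)$, exploiting the diagonal structure of the operators $W_\gamma W_\gamma^*$. First I would record the action of the weighted creation operators: iterating $W_i = S_i D_i$ yields $W_\gamma e_\alpha = \sqrt{b_\alpha/b_{\gamma\alpha}}\,e_{\gamma\alpha}$ for every $\gamma\in \FF_n^+$, and hence $W_\gamma^* e_\beta = \sqrt{b_{\beta'}/b_\beta}\,e_{\beta'}$ when $\beta=\gamma\beta'$, while $W_\gamma^* e_\beta=0$ when $\gamma$ is not a left factor of $\beta$. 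Consequently each $W_\gamma W_\gamma^*$ is diagonal, with $\langle W_\gamma W_\gamma^* e_\beta,e_\beta\rangle = b_{\beta'}/b_\beta$ if $\beta=\gamma\beta'$ and $0$ otherwise.

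For the first identity, set $A_k := \sum_{p=0}^{k}\sum_{|\gamma|=p} a_\gamma W_\gamma W_\gamma^*$. By the computation above each $A_k$ is diagonal, with entry at $e_\beta$ equal to $\frac{1}{b_\beta}\sum_{\gamma\beta'=\beta,\,|\gamma|\leq k} a_\gamma b_{\beta'}$. Once $k\geq |\beta|$ every factorization $\beta = \gamma\beta'$ is counted: for $\beta = g_0$ only $\gamma = \beta' = g_0$ contributes and the entry is $1$, while for $|\beta|\geq 1$ the Cauchy-product identity \eqref{sum1}, which is precisely the coefficient of $Z_\beta$ in the relation $g^{-1}g = 1$, forces $\sum_{\gamma\beta'=\beta} a_\gamma b_{\beta'} = 0$, so the entry is $0$. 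Thus $A_k e_\beta \to (P_{\CC 1})e_\beta$ for every basis vector, and combined with the uniform norm bound $\sup_k\|A_k\|<\infty$ from hypothesis $(c)$, a standard density argument promotes this pointwise convergence to SOT-convergence, giving $\Delta_{g^{-1}}({\bf W},{\bf W}^*) = P_{\CC 1}$.

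For the second identity I would compute $W_\alpha P_{\CC 1}W_\alpha^*$ directly as a rank-one operator: since $W_\alpha 1 = b_\alpha^{-1/2}e_\alpha$, one has $W_\alpha P_{\CC 1}W_\alpha^* = (W_\alpha 1)(W_\alpha 1)^* = b_\alpha^{-1}\, e_\alpha e_\alpha^*$, so $b_\alpha W_\alpha P_{\CC 1} W_\alpha^* = e_\alpha e_\alpha^*$ is the rank-one projection onto $\CC e_\alpha$. The partial sums $\sum_{|\alpha|\leq k} b_\alpha W_\alpha P_{\CC 1}W_\alpha^*$ are therefore the orthogonal projections onto $\Span\{e_\beta:|\beta|\leq k\}$, which increase strongly to $I$. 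The main obstacle lies in the first identity: because the $a_\alpha$ may have mixed signs, no positivity or monotonicity is available, and it is precisely hypothesis $(c)$ that supplies the uniform boundedness needed—together with the eventual stationarity of $A_k e_\beta$ on basis vectors—to deliver SOT convergence; no such subtlety arises in the second identity.
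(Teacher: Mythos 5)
Your proof is correct and follows essentially the same route as the paper's: both arguments show that each partial sum $\sum_{|\gamma|\le k}a_\gamma W_\gamma W_\gamma^*$ is a diagonal operator whose entry at $e_\beta$ stabilizes (to $1$ at $\beta=g_0$ and to $0$ for $|\beta|\ge 1$ via the convolution identity \eqref{sum1}), then invoke the uniform bound of hypothesis $(c)$ to upgrade basis-vector convergence to SOT, and finally identify $b_\alpha W_\alpha P_{\CC 1}W_\alpha^*$ as the rank-one projection onto $\CC e_\alpha$ so that the second series telescopes to the identity. No gaps.
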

\begin{proof}  First, note that the operators $W_1,\ldots, W_n$  are bounded if and only if  the condition (a) holds. 
Since 
$W_i e_\alpha=\sqrt{\frac {{b_\alpha}}{ {b_{g_i \alpha}}}}
e_{g_i \alpha}$,  $\alpha\in \FF_n^+$,
we deduce that  
\begin{equation}\label{WbWb}
W_\beta e_\gamma=
\frac {\sqrt{b_\gamma}}{\sqrt{b_{\beta \gamma}}}
e_{\beta \gamma} \quad
\text{ and }\quad
W_\beta^* e_\alpha =\begin{cases}
\frac {\sqrt{b_\gamma}}{\sqrt{b_{\alpha}}}e_\gamma& \text{ if }
\alpha=\beta\gamma \\
0& \text{ otherwise }
\end{cases}
\end{equation}
 for any $\alpha, \beta \in \FF_n^+$.
 Using  relation \eqref{WbWb}, we deduce that
$$
W_\beta W_\beta^* e_\alpha =\begin{cases}
\frac {{b_\gamma}}{{b_{\alpha}}} e_\alpha & \text{ if } \alpha=\beta\gamma\\
0& \text{ otherwise, }
\end{cases}
$$
which implies
$$\left(I+\sum\limits_{1\leq |\beta|\leq k} a_\beta W_\beta W_\beta^*\right)
 e_\alpha= \begin{cases} \left( 1+\sum\limits_{{\beta\gamma=\alpha}\atop { 1\leq |\beta|\leq k}}
\frac{a_\beta b_\gamma}{b_\alpha}\right)  e_\alpha,& \quad\text{ if }  |\alpha|\geq 1 \\
1,&\quad \text{ if } \alpha=g_0.
\end{cases}
$$
  Due to relation \eqref{sum1}, if $ k\geq |\alpha|\geq 1$, we have $1+\sum_{\beta\gamma=\alpha, 1\leq |\beta|\leq k}
\frac{a_\beta b_\gamma}{b_\alpha}=0$.
Consequently,   we deduce that
  $$\lim_{k\to \infty}\left(I+\sum\limits_{1\leq |\beta|\leq k} a_\beta W_\beta W_\beta^*\right)
 e_\alpha=0,\qquad  \alpha \in\FF_n^+.
 $$
  Now, condition (c)   implies 
$\Delta_{g^{-1}}(W,W^*)=P_{\CC 1}$.
To prove the last part of the proposition, note that  
\begin{equation*} 
P_{\CC 1} W_\beta^* e_\alpha =\begin{cases}
\frac {1}{\sqrt{b_{\beta}}}   & \text{ if } \alpha=\beta\\
0& \text{ otherwise}
\end{cases}
\end{equation*}
and the operator  $  b_\beta W_\beta P_{\CC 1} W_\beta^*$ is equal to the orthogonal projection of $F^2(H_n)$ on the subspace $\CC e_\beta$. Now, it is clear that
$$
\sum_{\alpha \in \FF_n^+} b_\alpha W_\alpha \Delta_{g^{-1}}({\bf W},{\bf W}^*) W_\alpha^*=
\sum_{\alpha \in \FF_n^+} b_\alpha W_\alpha  P_{\CC 1} W_\alpha^*
=I,
$$
where the convergence is in the strong operator topology.
 The proof is complete.   
\end{proof}

\begin{proposition}  \label{abb} The condition  \eqref{ubound}    is equivalent to the condition
$$
\sup_{ {N\in \NN},{\alpha\in \FF_n^+}}
\left| \sum_{{0\leq |\beta|\leq N}\atop{\beta\gamma=\alpha}}\frac{a_\beta b_\gamma}{b_\alpha} 
\right|<\infty.
$$
\end{proposition}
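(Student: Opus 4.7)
The plan is to exploit the explicit eigenvalue computation already carried out in the proof of Proposition \ref{W}. Let me set
$$
T_N:=\sum_{p=0}^{N}\sum_{|\alpha|=p} a_\alpha W_\alpha W_\alpha^*=I+\sum_{1\leq|\beta|\leq N}a_\beta W_\beta W_\beta^*,
$$
since $a_{g_0}=1$. The first step is to observe that each $T_N$ is a self-adjoint diagonal operator on $F^2(H_n)$ with respect to the orthonormal basis $\{e_\alpha\}_{\alpha\in\FF_n^+}$. This is immediate from the formula recalled in Proposition \ref{W}:
$$
W_\beta W_\beta^* e_\alpha=\begin{cases}\dfrac{b_\gamma}{b_\alpha}\,e_\alpha & \text{if } \alpha=\beta\gamma,\\ 0 & \text{otherwise.}\end{cases}
$$
Hence $T_N e_\alpha=\lambda_{N,\alpha}\,e_\alpha$ with
$$
\lambda_{N,\alpha}=1+\sum_{{\beta\gamma=\alpha}\atop{1\leq|\beta|\leq N}}\frac{a_\beta b_\gamma}{b_\alpha}=\sum_{{\beta\gamma=\alpha}\atop{0\leq|\beta|\leq N}}\frac{a_\beta b_\gamma}{b_\alpha},
$$
the second equality using $a_{g_0}=1$ and the convention $\beta=g_0$ gives $\gamma=\alpha$ and contribution $1$. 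The formula also holds (trivially) for $\alpha=g_0$, where $\lambda_{N,g_0}=1$.

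The second step is the standard fact that for any self-adjoint diagonal operator $T$ on a separable Hilbert space, with eigenvalues $\{\lambda_\alpha\}$ on an orthonormal basis, one has $\|T\|=\sup_\alpha|\lambda_\alpha|$. Applying this to $T_N$ gives
$$
\left\|\sum_{p=0}^{N}\sum_{|\alpha|=p} a_\alpha W_\alpha W_\alpha^*\right\|=\|T_N\|=\sup_{\alpha\in\FF_n^+}|\lambda_{N,\alpha}|=\sup_{\alpha\in\FF_n^+}\left|\sum_{{0\leq|\beta|\leq N}\atop{\beta\gamma=\alpha}}\frac{a_\beta b_\gamma}{b_\alpha}\right|.
$$

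The third and final step is to take the supremum over $N\in\NN$ on both sides; the two conditions in the statement are then visibly the same quantity. This is essentially a bookkeeping verification rather than a proof with a hard obstacle: the only thing one must be slightly careful about is the treatment of the $\alpha=g_0$ case and the inclusion of the $|\beta|=0$ term, which is why I rewrote $\lambda_{N,\alpha}$ as a single sum over $0\leq|\beta|\leq N$. No real obstacle is expected.
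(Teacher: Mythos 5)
Your proof is correct and follows essentially the same route as the paper: both arguments use the diagonal action $W_\beta W_\beta^* e_\alpha = \frac{b_\gamma}{b_\alpha} e_\alpha$ (for $\alpha=\beta\gamma$) established in Proposition \ref{W} to conclude that each partial sum is a diagonal self-adjoint operator whose norm equals $\sup_{\alpha}\bigl|\sum_{\beta\gamma=\alpha,\,0\leq|\beta|\leq N}\frac{a_\beta b_\gamma}{b_\alpha}\bigr|$, after which taking the supremum over $N$ finishes the argument. Your extra care with the $\beta=g_0$ term and the $\alpha=g_0$ case is a harmless elaboration of what the paper leaves implicit.
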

\begin{proof} According to the proof of Proposition \ref{W}, for each $N\in \NN$, the operator 
$\sum\limits_{0\leq |\beta|\leq N} a_\beta W_\beta W_\beta^*$ is diagonal and
$$
\left\|\sum\limits_{0\leq |\beta|\leq N} a_\beta W_\beta W_\beta^*\right\|
=\sup_{\alpha\in \FF_n^+} 
\left|  \sum\limits_{{\beta\gamma=\alpha}\atop { 0\leq |\beta|\leq N}}
\frac{a_\beta b_\gamma}{b_\alpha}
 \right|.
$$
The proof is complete.
\end{proof}

Following Theorem \ref{intertwining}, we  are led to define the
{\it noncommutative Berezin kernel} associated with an arbitrary  $n$-tuple $T=(T_1,\ldots,
T_n)\in \cD_{g^{-1}}(\cH)$ to be  the operator $K_{g,T}:\cH\to
F^2(H_n)\otimes \overline{\Delta_{g^{-1}} (T,T^*) (\cH)}$  given by
\begin{equation*}
 K_{g,T}h:=\sum_{\alpha\in \FF_n^+} \sqrt{b_\alpha}
e_\alpha\otimes \Delta_{g^{-1}}(T,T^*)^{1/2} T_\alpha^* h,\quad h\in \cH.
\end{equation*}

An important consequence of  Theorem \ref{intertwining} is the following.
\begin{corollary} \label{Berez}
If $g= 1+\sum_{ |\alpha|\geq 1} b_\alpha
Z_\alpha$ is  a free holomorphic function in a neighborhood of the origin, with $b_\alpha>0$, if $|\alpha|\geq 1$, and with   inverse  $g^{-1}=1+\sum_{ |\gamma|\geq 1} a_\gamma
Z_\gamma$,  and  $T=(T_1,\ldots,
T_n)\in \cD_{g^{-1}}(\cH)$ , then
$$
 K_{g,T} T_i^*=(W_i^*\otimes I_\cD)K_{g,T},\qquad i\in \{1,\ldots, n\},
 $$
 where   the noncommutative Berezin kernel  $K_{g,T} $ is a contraction.
\end{corollary}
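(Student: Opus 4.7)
The plan is that this corollary is essentially an immediate unpacking of Theorem \ref{intertwining}, specialized to the choice $D(T) := \Delta_{g^{-1}}(T,T^*)$. The membership condition $T \in \cD_{g^{-1}}(\cH)$ is tailored precisely so that this choice of $D(T)$ satisfies the hypotheses of part (iii) of the theorem: condition (i) in the definition of $\cD_{g^{-1}}(\cH)$ asserts that the defining series for $\Delta_{g^{-1}}(T,T^*)$ converges in the weak operator topology to a positive operator, and condition (ii) asserts that the second series $\sum_\alpha b_\alpha T_\alpha \Delta_{g^{-1}}(T,T^*) T_\alpha^*$ converges weakly to an operator dominated by $I$. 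Hence Theorem \ref{intertwining} is applicable.

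First I would observe that the operator $\widetilde{V}$ manufactured in Theorem \ref{intertwining} out of a positive $D(T)$ is, under the substitution $D(T) = \Delta_{g^{-1}}(T,T^*)$ and $\cD = \overline{\Delta_{g^{-1}}(T,T^*)(\cH)}$, literally identical to the noncommutative Berezin kernel
$$
K_{g,T} h = \sum_{\alpha\in \FF_n^+} \sqrt{b_\alpha}\, e_\alpha \otimes \Delta_{g^{-1}}(T,T^*)^{1/2} T_\alpha^* h.
$$
Therefore the intertwining identity $\widetilde{V} T_i^* = (W_i^* \otimes I_\cD) \widetilde{V}$ proved in Theorem \ref{intertwining} is exactly the desired relation $K_{g,T} T_i^* = (W_i^* \otimes I_\cD) K_{g,T}$ for each $i \in \{1,\ldots,n\}$.

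For the contraction statement, I would compute $K_{g,T} K_{g,T}^*$ directly from the definition. The same computation that appears at the end of the proof of Theorem \ref{intertwining} gives
$$
K_{g,T} K_{g,T}^* = \sum_{\alpha\in \FF_n^+} b_\alpha T_\alpha \Delta_{g^{-1}}(T,T^*) T_\alpha^*,
$$
with weak operator convergence. By condition (ii) in the definition of $\cD_{g^{-1}}(\cH)$, this operator is dominated by $I_\cH$, so $\|K_{g,T}\|^2 = \|K_{g,T} K_{g,T}^*\| \leq 1$, proving that $K_{g,T}$ is a contraction.

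There is no real obstacle here, as the corollary is purely a reformulation of the $D(T) := \Delta_{g^{-1}}(T,T^*)$ case of Theorem \ref{intertwining}; the only point worth verifying is that the hypotheses on the convergence and positivity of $D(T)$ are built into membership in $\cD_{g^{-1}}(\cH)$, which is immediate from the definition.
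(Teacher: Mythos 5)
Your proposal is correct and matches the paper's intent exactly: the paper states this corollary without proof as an immediate consequence of Theorem \ref{intertwining}, obtained by taking $D(T):=\Delta_{g^{-1}}(T,T^*)$, observing that $\widetilde V$ is then literally $K_{g,T}$, and invoking condition (ii) of membership in $\cD_{g^{-1}}(\cH)$ for the contraction bound. The only blemish is that the displayed identity should read $K_{g,T}^*K_{g,T}=\sum_{\alpha}b_\alpha T_\alpha \Delta_{g^{-1}}(T,T^*)T_\alpha^*$ (the product in the other order acts on $F^2(H_n)\otimes\cD$, not on $\cH$), but since $\|K_{g,T}\|^2=\|K_{g,T}^*K_{g,T}\|=\|K_{g,T}K_{g,T}^*\|$ the conclusion is unaffected.
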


\begin{proposition} \label{lemm2} Let  $g= 1+\sum_{ |\alpha|\geq 1} b_\alpha
Z_\alpha$ be  a free holomorphic function in a neighborhood of the origin, with $b_\alpha>0$, if $|\alpha|\geq 1$, and with   inverse  $g^{-1}=1+\sum_{ |\gamma|\geq 1} a_\gamma
Z_\gamma$. If  $X=(X_1,\ldots, X_n)\in B(\cH)^n$ is such that
$$\sum_{p=1}^\infty\sum_{ |\alpha|= p} |a_\alpha|  X_\alpha X_\alpha^* \leq  c I
$$
for some $c\in (0,1)$, then $X\in \cD_{g^{-1}}(\cH)$  is a pure   element.  Moreover,  there exists 
  $\epsilon>0$ such that   $[B(\cH)^n]_\epsilon\subset \cD_{g^{-1}}(\cH)$ and  any $n$-tuple in
  $[B(\cH)^n]_\epsilon$ is  pure and radial in $\cD_{g^{-1}}(\cH)$. 

\end{proposition}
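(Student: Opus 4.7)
The plan is to verify the two defining conditions of $\cD_{g^{-1}}(\cH)$ and the purity condition by exploiting the recursions \eqref{sum1} and \eqref{sum2} that encode $gf=fg=1$. Positivity of $\Delta_{g^{-1}}(X,X^*)$ is straightforward: since $\sum_{|\beta|\geq 1}|a_\beta|X_\beta X_\beta^* \leq cI$, the signed sum $\sum_{|\beta|\geq 1} a_\beta X_\beta X_\beta^*$ converges in SOT (decompose into positive and negative parts, each dominated by $cI$) to an operator of norm at most $c$, giving $(1-c)I \leq \Delta_{g^{-1}}(X,X^*) \leq (1+c)I$, in particular $\Delta_{g^{-1}}(X,X^*)\geq 0$.

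The heart of the argument is the a priori bound $\sum_\alpha b_\alpha \|X_\alpha^* h\|^2 \leq \|h\|^2/(1-c)$ for every $h\in \cH$. Set $\tau_N(h):=\sum_{|\alpha|\leq N}b_\alpha\|X_\alpha^* h\|^2$. Relation \eqref{sum1}, rewritten as $b_\gamma = -\sum_{\gamma=\beta\alpha,\,|\beta|\geq 1} a_\beta b_\alpha$ for $|\gamma|\geq 1$, combined with $\|X_\gamma^* h\|^2=\|X_\alpha^*X_\beta^* h\|^2$ for $\gamma=\beta\alpha$, yields after summation the recursion
\[
\tau_N(h) = \|h\|^2 - \sum_{1\leq |\beta|\leq N} a_\beta\,\tau_{N-|\beta|}(X_\beta^* h).
\]
Induction on $N$: assuming $\tau_k(v)\leq \|v\|^2/(1-c)$ for all $k<N$ and all $v$, the hypothesis $\sum_{|\beta|\geq 1}|a_\beta|\|X_\beta^* h\|^2 \leq c\|h\|^2$ gives $\tau_N(h) \leq \|h\|^2 + \frac{c}{1-c}\|h\|^2 = \|h\|^2/(1-c)$. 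Passing $N\to\infty$ yields $\sum_\alpha b_\alpha X_\alpha X_\alpha^* \leq (1-c)^{-1} I$ in SOT.

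To prove purity, expand $S_N := \sum_{|\alpha|\leq N}b_\alpha\langle \Delta_{g^{-1}}(X,X^*) X_\alpha^* h, X_\alpha^* h\rangle$ as $\tau_N(h)+\sum_{|\alpha|\leq N,\,|\beta|\geq 1}b_\alpha a_\beta\|X_{\alpha\beta}^* h\|^2$. The a priori bound makes the double series in $(\alpha,\beta)$ absolutely convergent, so one may regroup by $\gamma=\alpha\beta$. For $|\gamma|\leq N$, \eqref{sum2} collapses the inner sum to $-b_\gamma$, precisely canceling $\tau_N(h)-\|h\|^2$; the remaining tail, after another permitted swap, equals $-\sum_{|\alpha|>N}b_\alpha\|\Delta_{g^{-1}}(X,X^*)^{1/2}X_\alpha^* h\|^2 = -(S_\infty - S_N)$, where $S_\infty\leq \frac{1+c}{1-c}\|h\|^2<\infty$. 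Hence $S_N = \|h\|^2 - (S_\infty - S_N)$, forcing $S_\infty = \|h\|^2$. Equivalently, $\sum_\alpha b_\alpha X_\alpha\Delta_{g^{-1}}(X,X^*) X_\alpha^* = I$ in SOT, so $X\in\cD^{pure}_{g^{-1}}(\cH)$.

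For the ball containment, Lemma \ref{lem1} ensures $g^{-1}$ is free holomorphic at the origin, so $\limsup_k(\max_{|\alpha|=k}|a_\alpha|)^{1/k}<\infty$. Combined with $\|\sum_{|\alpha|=k}|a_\alpha|X_\alpha X_\alpha^*\|\leq (\max_{|\alpha|=k}|a_\alpha|)\|X\|_{\mathrm{row}}^{2k}$ (deduced from $\sum_{|\alpha|=k}X_\alpha X_\alpha^*\leq\|X\|_{\mathrm{row}}^{2k}I$), one chooses $\epsilon>0$ so small that $\sum_{k\geq 1}(\max_{|\alpha|=k}|a_\alpha|)\epsilon^{2k}<1$; any $X\in [B(\cH)^n]_\epsilon$ then satisfies the hypothesis with some $c\in(0,1)$, and is therefore pure. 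Since $\|rX\|_{\mathrm{row}}\leq\epsilon$ for every $r\in(0,1]$, the same argument applied to $rX$ makes $X$ radially pure. The main delicate point is the apparent circularity of proving $\sum b_\alpha X_\alpha\Delta X_\alpha^*=I$: the formal cancellation via \eqref{sum2} requires absolute convergence of a double series, and the breakthrough is that the a priori $\ell^2$-bound obtained via the inductive use of \eqref{sum1} supplies exactly this absolute convergence.
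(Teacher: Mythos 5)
Your proof is correct. Each step checks out: the recursion $\tau_N(h)=\|h\|^2-\sum_{1\le|\beta|\le N}a_\beta\,\tau_{N-|\beta|}(X_\beta^*h)$ is a faithful translation of \eqref{sum1}, the induction giving $\tau_N(h)\le\|h\|^2/(1-c)$ is sound (the inductive hypothesis is applied to the vectors $X_\beta^*h$ and the hypothesis $\sum|a_\beta|\|X_\beta^*h\|^2\le c\|h\|^2$ closes the loop), and the a priori bound does supply exactly the absolute convergence needed to regroup the double sum by $\gamma=\alpha\beta$ and invoke \eqref{sum2}, yielding $S_N=\|h\|^2-(S_\infty-S_N)$ and hence $S_\infty=\|h\|^2$. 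The ball-containment argument is the same as the paper's up to replacing $\sum_{|\alpha|=k}|a_\alpha|$ by $\max_{|\alpha|=k}|a_\alpha|$.

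The route, however, differs from the paper's in the central step. The paper introduces the completely positive map $\Phi_X(Y)=\sum_{|\alpha|\ge1}a_\alpha X_\alpha YX_\alpha^*$, proves $|\langle\Phi_X^k(Y)h,h\rangle|\le c^k\|Y\|\,\|h\|^2$, and then rearranges the Neumann-type series $\sum_k(-1)^k\Phi_X^k(\Delta_{g^{-1}}(X,X^*))$ using the explicit inversion formula of Lemma \ref{lem1} ($b_\beta$ as an alternating sum over all factorizations $\gamma_1\cdots\gamma_j=\beta$); the identity $\sum_\beta b_\beta X_\beta\Delta X_\beta^*=I$ then falls out by telescoping, since $(-1)^m\langle\Phi_X^m(I)h,h\rangle\to0$. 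You instead use only the first-order convolution identities \eqref{sum1} and \eqref{sum2}, obtaining the purity identity by a direct cancellation of partial sums. The paper's argument is more conceptual (iterated CP maps, geometric decay in $k$) and generalizes readily to the weighted domains $\cD^m_p$; yours is more elementary, avoids the global inversion formula entirely, and produces as a by-product the quantitative bound $\sum_\alpha b_\alpha X_\alpha X_\alpha^*\le(1-c)^{-1}I$, i.e.\ an explicit norm bound on the Berezin kernel before purity is established. Both are complete proofs.
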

\begin{proof}
First, note that 
$$
\Delta_{g^{-1}}(X,X^*):= I+\sum_{p=1}^\infty\sum_{ |\alpha|= p} a_\alpha  X_\alpha X_\alpha^*\geq I-\sum_{p=1}^\infty\sum_{ |\alpha|= p} |a_\alpha|  X_\alpha X_\alpha^*\geq 0.
$$
 Set   $\Phi_{X}(Y):= \sum_{p=1}^\infty\sum_{ |\alpha|= p} a_\alpha X_\alpha YX_\alpha^*$  for $Y\in B(\cH)$ and $\hat\Phi_{X}(Y):= \sum_{p=1}^\infty\sum_{ |\alpha|= p} |a_\alpha| X_\alpha YX_\alpha^*$.   If $Y\geq 0$, then we have
 \begin{equation*}
 \begin{split}
 |\left<\Phi_X^k(Y)h,h\right>|&\leq \left |\left<\hat\Phi_X^k(Y)h,h\right>\right|
 \leq \|Y\| \left |\left<\hat\Phi_X^k(I)h,h\right>\right|\leq c^k\|Y\|\|h\|^2
 \end{split}
 \end{equation*}
 for any $h\in \cH$
 and, hence, $\sum_{k=1}^\infty|\left< \Phi_X^k(Y)h,h\right>|<\infty$.
As a consequence,  for each $h\in \cH$, we can rearrange   the convergent series  of real numbers
$\sum_{k=1}^\infty \left<(-1)^k\Phi_{X}^k(\Delta_{g^{-1}}(X,X^*))h,h\right>$ and show that it is equal to
$$
\sum_{m=1}^\infty
\sum_{|\beta|=m}\left<\left(
\sum_{j=1}^{|\beta|}\sum_{{\gamma_1\cdots
\gamma_j=\beta}\atop{|\gamma_1|\geq 1,\ldots, |\gamma_j|\geq 1}}
(-1)^ja_{\gamma_1}\cdots a_{\gamma_j}\right) X_{\gamma_1\cdots \gamma_j}
\Delta_{g^{-1}}(X,X^*) X_{\gamma_1\cdots \gamma_j}^*h,h\right>.
$$
Now, 
 using Lemma \ref{lem1} and the fact that
 $$
 \left<\Delta_{g^{-1}}(X,X^*) h,h\right>=\|h\|^2+\left<\Phi_X(I)h,h\right>,
 $$
  we deduce that
\begin{equation*}
\begin{split}
&\left< \sum_{\beta\in \FF_n^+} b_\beta X_\beta \Delta_{g^{-1}}(X,X^*)X_\beta^*h,
h\right>
 =
\left<\Delta_{g^{-1}}(X,X^*) h,h\right>+
\sum_{m=1}^\infty \sum_{|\beta|=m}\left<
b_\beta X_\beta \Delta_{g^{-1}}(X,X^*)X_\beta^*h, h\right>\\
& =\left<\Delta_{g^{-1}}(X,X^*) h,h\right>+ \sum_{m=1}^\infty
\sum_{|\beta|=m}\left<\left(
\sum_{j=1}^{|\beta|}\sum_{{\gamma_1\cdots
\gamma_j=\beta}\atop{|\gamma_1|\geq 1,\ldots, |\gamma_j|\geq 1}}
(-1)^ja_{\gamma_1}\cdots a_{\gamma_j}\right) X_{\gamma_1\cdots \gamma_j}
\Delta_{g^{-1}}(X,X^*) X_{\gamma_1\cdots \gamma_j}^*h,h\right>\\
& =\left<\Delta_{g^{-1}}(X,X^*) h,h\right>+
\sum_{k=1}^\infty \left<(-1)^k\Phi_{X}^k(\Delta_{g^{-1}}(X,X^*))h,h\right>\\
& =\|h\|^2 -\lim_{m\to\infty} \left<(-1)^m\Phi_{X}^m(I)h,h\right>
\end{split}
\end{equation*}
for any $h\in \cH$.  Consequently,   since $\lim_{m\to\infty} \left<(-1)^m\Phi_{X}^m(I)h,h\right>=0$,  we obtain
$$
K_{g,X}^* K_{g,X}=\sum_{\beta\in \FF_n^+} b_\beta X_\beta \Delta_{g^{-1}}(X,X^*)X_\beta^*= I.  
$$
 To prove the last part of the proposition,
 let $\rho$ be the radius of convergence of the free holomorphic function $g^{-1}$, i.e.
$
\frac{1}{\rho}:= \limsup_{k\to\infty} \left( \sum_{|\alpha|=k}
|a_\alpha|^2\right)^{1/2k}.
$
Due to Lemma \ref{lem1}, we have $\rho>0$.
If $0<t<\frac{\rho}{\sqrt{n}}$, then the $n$-tuple $(t I_\cH,\ldots, t I_\cH)$ is in the ball $[B(\cH)^n]_\rho$, which  implies 
$
M:=\sum_{k=1}^\infty \sum_{|\alpha|=k} |a_\alpha|t^k<\infty.
$
Let $0<\omega <1$ be such that $\omega M<c<1$ and set $\epsilon:=\sqrt{\omega t}$. Note that, if $X\in [B(\cH)^n]_\epsilon$, then
\begin{equation*} 
 \sum_{p=1}^\infty\sum_{ |\alpha|= p} |a_\alpha|  \|X_\alpha X_\alpha^*\|\leq 
 \sum_{k=1}^\infty \sum_{|\alpha|=k} |a_\alpha| \epsilon^{2k} \leq  \omega \sum_{k=1}^\infty \sum_{|\alpha|=k} |a_\alpha|t^k =\omega M<c.
\end{equation*}
Applying the first part of the proposition, we deduce that
   $[B(\cH)^n]_\epsilon\subset \cD_{g^{-1}}(\cH)$ and  any $n$-tuple in
  $[B(\cH)^n]_\epsilon$ is  pure and radial with respect to $\cD_{g^{-1}}(\cH)$.
The proof is complete.
\end{proof}

\begin{theorem} \label{model}  Let $g$ be an admissible  free holomorphic function and let $T=(T_1,\ldots, T_n)\in B(\cH)^n$. Then  there is a Hilbert spaces $\cD$ such that 
$$
T_i^*=(W_i^*\otimes I_\cD)|_{\cH},\qquad i\in \{1,\ldots, n\},
$$
where $\cH$ is  identified with a coinvariant subspace  for the operators $W_1\otimes I_\cD,\ldots, W_n\otimes I_\cD$, if and only  if $T$  is    a  pure $n$-tuple in the noncommutative set  $\cD_{g^{-1}}(\cH)$.
In this case, we have
 $$
 K_{g,T} T_i^*=(W_i^*\otimes I_\cD)K_{g,T},\qquad i\in \{1,\ldots, n\},
 $$
 and the noncommutative Berezin kernel $K_{g,T}$ is an isometry.
\end{theorem}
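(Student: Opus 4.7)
The plan is to prove the equivalence via the noncommutative Berezin kernel $K_{g,T}$, exploiting Proposition~\ref{W}, which identifies ${\bf W}$ as the prototypical pure element of $\cD_{g^{-1}}(F^2(H_n))$.

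\emph{Sufficiency.} Suppose $T$ is pure in $\cD_{g^{-1}}(\cH)$. I apply Theorem~\ref{intertwining} with $D(T):=\Delta_{g^{-1}}(T,T^*)$; by definition of $\cD_{g^{-1}}(\cH)$ this is a positive operator and the series $\sum_\alpha b_\alpha T_\alpha\Delta_{g^{-1}}(T,T^*)T_\alpha^*$ is WOT-convergent. Theorem~\ref{intertwining} then produces $\widetilde V=K_{g,T}$ satisfying $K_{g,T}T_i^*=(W_i^*\otimes I)K_{g,T}$; its square-norm is the purity sum, which equals $I_\cH$ by hypothesis, so $K_{g,T}$ is isometric and $K_{g,T}(\cH)$ is a coinvariant subspace of $F^2(H_n)\otimes\cD$ with $T_i^*=(W_i^*\otimes I_\cD)|_{K_{g,T}(\cH)}$.

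\emph{Necessity.} Conversely, assume $\cH$ sits inside $\cK:=F^2(H_n)\otimes\cD$ as a coinvariant subspace for $W_i\otimes I_\cD$ with $T_i^*=(W_i^*\otimes I_\cD)|_\cH$. Since $\cH$ is invariant under each $W_i^*\otimes I_\cD$, iteration gives $T_\alpha^*=(W_\alpha^*\otimes I_\cD)|_\cH$ for every $\alpha\in\FF_n^+$. For $h,h'\in\cH$ this yields
\begin{equation*}
\langle T_\alpha T_\alpha^* h,h'\rangle=\langle(W_\alpha W_\alpha^*\otimes I_\cD)h,h'\rangle.
\end{equation*}
By Proposition~\ref{W}, $\sum_\alpha a_\alpha W_\alpha W_\alpha^*=P_{\CC 1}$ with SOT convergence, whence $\sum_\alpha a_\alpha(W_\alpha W_\alpha^*\otimes I_\cD)=P_{\CC 1}\otimes I_\cD$ in SOT on $\cK$. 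Restricting the corresponding sesquilinear forms to $\cH\times\cH$, I conclude that $\sum_\alpha a_\alpha T_\alpha T_\alpha^*$ converges in WOT to the positive compression $\Delta_{g^{-1}}(T,T^*)=P_\cH(P_{\CC 1}\otimes I_\cD)|_\cH$, so $T\in\cD_{g^{-1}}(\cH)$.

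Purity is checked by a similar compression computation. Using that $T_\alpha^*h,T_\alpha^*h'\in\cH$, the inner projection $P_\cH$ hidden inside $\Delta_{g^{-1}}(T,T^*)$ may be dropped to give
\begin{equation*}
\langle T_\alpha\Delta_{g^{-1}}(T,T^*)T_\alpha^* h,h'\rangle=\langle(W_\alpha\otimes I_\cD)(P_{\CC 1}\otimes I_\cD)(W_\alpha^*\otimes I_\cD)h,h'\rangle.
\end{equation*}
Summing with weights $b_\alpha$ and invoking the second identity of Proposition~\ref{W}, namely $\sum_\alpha b_\alpha W_\alpha P_{\CC 1}W_\alpha^*=I$ in SOT, yields $\sum_\alpha b_\alpha T_\alpha\Delta_{g^{-1}}(T,T^*)T_\alpha^*=I_\cH$, so $T$ is pure. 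The in-case assertions about $K_{g,T}$ then follow at once from Corollary~\ref{Berez} and the isometry criterion at the end of Theorem~\ref{intertwining}. The only delicate point is keeping the WOT/SOT bookkeeping straight when passing from operators on $\cK$ to their compressions on the non-reducing subspace $\cH$; this is routine since WOT is coarser than SOT and bounded compressions preserve weak limits.
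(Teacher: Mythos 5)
Your proposal is correct and follows essentially the same route as the paper: the necessity direction compresses the two identities of Proposition~\ref{W} (namely $\Delta_{g^{-1}}({\bf W},{\bf W}^*)=P_{\CC 1}$ and $\sum_\alpha b_\alpha W_\alpha P_{\CC 1}W_\alpha^*=I$) from $F^2(H_n)\otimes\cD$ down to the coinvariant subspace $\cH$, and the sufficiency direction applies Theorem~\ref{intertwining} with $D(T)=\Delta_{g^{-1}}(T,T^*)$ to get the isometric intertwining Berezin kernel. The only cosmetic difference is that you carry out the compression at the level of sesquilinear forms, where the paper writes the same identities directly as $P_\cH(\cdot)|_\cH$.
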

\begin{proof} To prove the direct implication, assume that 
$T_i^*=(W_i^*\otimes I_\cD)|_{\cH}$ for  every $ i\in \{1,\ldots, n\}$. Then we have
$$
\sup_{k\in \NN}\left\|\sum_{p=0}^k\sum_{ |\alpha|= p} a_\alpha T_\alpha T_\alpha^*\right\|\leq
\sup_{k\in \NN}\left\|\sum_{p=0}^k\sum_{ |\alpha|= p} a_\alpha W_\alpha W_\alpha^*\right\|<\infty.
$$
Due to Proposition \ref{W},
$$
\Delta_{g^{-1}}({\bf W},{\bf W}^*):=\sum_{p=0}^\infty\sum_{ |\alpha|= p} a_\alpha  W_\alpha W_\alpha^*
$$
converges in the  strong operator topology and  $\Delta_{g^{-1}}(W,W^*)=P_{\CC 1}$, the orthogonal projection of $F^2(H_n)$ onto the constants $\CC1$.
Consequently,
$$
\Delta_{g^{-1}}(T,T^*):=\sum_{p=0}^\infty\sum_{ |\alpha|= p} a_\alpha  T_\alpha T_\alpha^*=P_\cH\sum_{p=0}^\infty\sum_{ |\alpha|= p} a_\alpha  W_\alpha W_\alpha^*|_\cH
$$
converges in the  strong operator topology to a positive operator. On the other hand, Proposition \ref{W} shows that
$$
\sum_{\alpha \in \FF_n^+} b_\alpha T_\alpha \Delta_{g^{-1}}(T,T^*) T_\alpha^*
=
P_\cH\sum_{\alpha \in \FF_n^+} b_\alpha W_\alpha \Delta_{g^{-1}}({\bf W},{\bf W}^*) W_\alpha^*|_\cH=I_\cH,
$$
where the convergence is in the strong operator topology. Hence, $T$ is a pure $n$-tuple in 
$\cD_{g^{-1}}(\cH)$.
 
 Conversely, assume that $T$  is    a  pure $n$-tuple in the noncommutative set  $\cD_{g^{-1}}(\cH)$.
 In this case,  $\Delta_{g^{-1}}(T,T^*)\geq 0$ and  $\sum_{\alpha \in \FF_n^+} b_\alpha T_\alpha \Delta_{g^{-1}}(T,T^*) T_\alpha^*=I_\cH$, where the convergence is in the weak operator topology. Applying Theorem
 \ref{intertwining}, we deduce that the Berezin kernel  $K_{g,T}$ is an isometry and 
 $
 K_{g,T} T_i^*=(W_i^*\otimes I_\cD)K_{g,T}$,  $i\in \{1,\ldots, n\}$,
 where $\cD:=\overline{\Delta_{g^{-1}}(T,T^*)\cH}$.
Identifying $\cH$ with $K_{g,T}\cH$, we complete the proof.
\end{proof}
We refer the reader to \cite{Pa-book}, \cite{Pi-book} for basic facts concerning completely contractive (resp. positive) maps.
\begin{theorem}\label{pure}
If   $g$ is an admissible  free holomorphic function and $T\in \overline{\cD^{pure}_{g^{-1}}(\cH)}$, then there is a unital completely contractive linear map
$$
\Psi_T:\cS:=\overline{span}\{W_\alpha W_\beta^*: \ \alpha,\beta\in \FF_n^+\}\to B(\cH),
$$
such that $\Psi_T (W_\alpha W_\beta^*)=T_\alpha T_\beta^*$  for any $\alpha,\beta\in \FF_n^+$.
If $T\in B(\cH)^n$ is a radially pure  $n$-tuple with respect to $\cD_{g^{-1}}(\cH)$, then
$$
\Psi_T(A)=\lim_{r\to 1}  K_{g,rT}^*(A\otimes I) K_{g,rT},\qquad A\in \cS,
$$
where the limit is in the operator norm.
\end{theorem}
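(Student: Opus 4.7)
The plan is to produce $\Psi_T$ first in the case where $T$ is itself pure, via compression by the noncommutative Berezin kernel, then extend by continuity to the norm closure, and finally identify the radial compression formula.

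First, assume $T\in\cD^{pure}_{g^{-1}}(\cH)$. Theorem \ref{model} asserts that the Berezin kernel $K_{g,T}\colon\cH\to F^2(H_n)\otimes\cD$ is an isometry satisfying $K_{g,T}T_i^*=(W_i^*\otimes I_\cD)K_{g,T}$ for every $i$. Iterating this intertwining gives $K_{g,T}T_\beta^*=(W_\beta^*\otimes I_\cD)K_{g,T}$ for every $\beta\in\FF_n^+$, and taking adjoints yields $T_\alpha K_{g,T}^*=K_{g,T}^*(W_\alpha\otimes I_\cD)$. Combining these,
\[
T_\alpha T_\beta^* \;=\; K_{g,T}^*(W_\alpha W_\beta^*\otimes I_\cD)K_{g,T}, \qquad \alpha,\beta\in\FF_n^+.
\]
Hence I would define $\Psi_T(A):=K_{g,T}^*(A\otimes I_\cD)K_{g,T}$ for $A\in\cS$. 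Since $K_{g,T}^*K_{g,T}=I_\cH$ the map is unital, and since it is a compression by an isometry of the ampliation $A\mapsto A\otimes I_\cD$, it is completely contractive; the displayed identity shows $\Psi_T(W_\alpha W_\beta^*)=T_\alpha T_\beta^*$.

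Next, suppose only that $T\in\overline{\cD^{pure}_{g^{-1}}(\cH)}$, and pick $T^{(m)}\in\cD^{pure}_{g^{-1}}(\cH)$ with $T^{(m)}\to T$ in norm. For any matrix-valued noncommutative $*$-polynomial $P$ in the $W$'s, the word maps $X\mapsto X_\alpha X_\beta^*$ are norm continuous, so $P(T^{(m)},T^{(m)*})\to P(T,T^*)$ in norm at every matrix level. Since each $\Psi_{T^{(m)}}$ from the previous step is completely contractive, $\|P(T^{(m)},T^{(m)*})\|\le\|P({\bf W},{\bf W}^*)\|$ entrywise, and passing to the limit transfers this bound to $P(T,T^*)$. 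Thus the assignment $W_\alpha W_\beta^*\mapsto T_\alpha T_\beta^*$ is a completely contractive unital map from the norm-dense $*$-polynomial subalgebra of $\cS$ into $B(\cH)$, and extends by continuity to a unital completely contractive linear $\Psi_T\colon\cS\to B(\cH)$.

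For the radial formula, let $T$ be radially pure with respect to $\cD_{g^{-1}}(\cH)$, so $rT\in\cD^{pure}_{g^{-1}}(\cH)$ for $r\in(\delta,1)$. The first step applied to $rT$ gives $\Psi_{rT}(A)=K_{g,rT}^*(A\otimes I)K_{g,rT}$ for $A\in\cS$. On monomials,
\[
\Psi_{rT}(W_\alpha W_\beta^*) \;=\; r^{|\alpha|+|\beta|}\,T_\alpha T_\beta^* \;\xrightarrow[r\to 1]{}\; T_\alpha T_\beta^* \;=\; \Psi_T(W_\alpha W_\beta^*)
\]
in norm, so by linearity the convergence holds on the dense polynomial subspace. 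Since the family $\{\Psi_{rT}\}_{r\in(\delta,1)}$ together with $\Psi_T$ is uniformly completely contractive (all have cb-norm $1$), a standard three-$\varepsilon$ argument upgrades pointwise convergence on this dense subspace to norm convergence on every $A\in\cS$.

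The main obstacle is Step~2: one must show that the monomial formula actually extends consistently to a completely contractive map on the entire operator system $\cS$, not just on the $*$-polynomial subalgebra. This is precisely where the uniform complete contractivity of the approximating maps $\Psi_{T^{(m)}}$ is needed, so that the cb-norm estimate survives the norm limit at each matrix level.
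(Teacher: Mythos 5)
Your proposal is correct and follows essentially the same route as the paper: represent $T_\alpha T_\beta^*$ as $K_{g,T}^*(W_\alpha W_\beta^*\otimes I)K_{g,T}$ for pure $T$ via Theorem \ref{model}, deduce the (matricial) von Neumann inequality, pass to the norm closure to extend $\Psi_T$ to $\cS$, and then use uniform complete contractivity of $\{\Psi_{rT}\}$ together with convergence on the dense span of the $W_\alpha W_\beta^*$ and a three-$\varepsilon$ argument for the radial limit. The only cosmetic difference is that you make the compression formula and its unital complete contractivity explicit for pure tuples before taking limits, whereas the paper works directly with the resulting norm inequalities.
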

\begin{proof}
Fix  $T\in \overline{\cD^{pure}_{g^{-1}}(\cH)}$ and let $\{T^{(p)}\}_{p\in \NN}\subset \cD^{pure}_{g^{-1}}(\cH)$ be a sequence such that $\|T-T^{(p)}\|\to 0$ as $p\to \infty$. Due to Theorem \ref{model}, we have
$$
T^{(p)}_\alpha T^{(p)*}_\beta=K_{g,T^{(p)}}^*(W_\alpha W_\beta^*\otimes I_\cH)K_{g,T^{(p)}},\qquad \alpha,\beta\in \FF_n^+.
$$
Since  $K_{g,T^{(p)}}$ is an isometry, we deduce that
$$
\left\|\sum_{|\alpha|,|\beta|\leq m}d_{\alpha,\beta} T^{(p)}_\alpha T^{(p)*}_\beta\right\|
\leq 
\left\|\sum_{|\alpha|,|\beta|\leq m}d_{\alpha,\beta}  W_\alpha W_\beta^*\right\|,\qquad m\in \NN, d_{\alpha,\beta}\in \CC,
$$
which implies 
\begin{equation}
\label{vN}
\left\|\sum_{|\alpha|,|\beta|\leq m}d_{\alpha,\beta} T_\alpha T^*_\beta\right\|
\leq 
\left\|\sum_{|\alpha|,|\beta|\leq m}d_{\alpha,\beta}  W_\alpha W_\beta^*\right\|,\qquad m\in \NN, d_{\alpha,\beta}\in \CC.
\end{equation}
Let $A\in \cS$ and consider a sequence $p_s({\bf W},{\bf W}^*):=\sum_{|\alpha|,|\beta|\leq m_s}d^{(s)}_{\alpha,\beta}  W_\alpha W_\beta^*$ such that $\|A-p_s({\bf W},{\bf W}^*)\|\to 0$  as $s\to \infty$.
Due to relation \eqref{vN},  the operator $\Phi_T(A):=\lim_{s\to \infty} p_s(T,T^*)$ is well-defined and 
$$
\|\Phi_T(A)\|=\lim_{s\to \infty} \|p_s(T,T^*)\|\leq \lim_{s\to \infty} \|p_s({\bf W},{\bf W}^*))\|=\|A\|.
$$
Similarly, one can prove that
$
\|[\Phi_T(A_{ij})]_{q\times q}\|\leq  \|[A_{ij}]_{q\times q}\|
$
for any $q\times q$ matrix $[A_{ij}]_{q\times q}$.

Now, let $T\in B(\cH)^n$  be  a radially pure  $n$-tuple with respect to $\cD_{g^{-1}}(\cH)$.  Then there is $\delta\in (0,1)$ such that $rT\in  \cD^{pure}_{g^{-1}}(\cH)$ for $r\in  (\delta,1)$. Using the first part of the proof, we deduce that
$\Psi_{rT}(A):=\lim_{s\to \infty} p_s(rT,rT^*)$ exists in norm, 
$$
\|\Psi_{rT}(A)-p_s(rT,rT^*)\|\leq \|A-p_s({\bf W},{\bf W}^*)\|,\qquad r\in (\delta,1),
$$
and 
$$
p_s(rT,rT^*)=K_{g, rT}^*(A\otimes I_\cH) K_{g, rT},\qquad r\in (\delta,1).
$$
Due to the fact that    $\|A-p_s(W,W^*)\|\to 0$,    $\|\Psi_T(A)-p_s(T,T^*)\|\to 0$, and $\|\Psi_{rT}(A)-p_s(rT,rT^*)\|\to 0$ uniformly with respect to  $r\in (\delta, 1)$,  it is clear that relation
\begin{equation*}
\begin{split}
\|\Psi_T(A)-K_{g, rT}^*(A\otimes I_\cH) K_{g, rT}\|&\leq 
\|\Psi_T(A)-p_s(T,T^*)\|\\
\qquad &+\|p_s(T,T^*)-p_s(rT,rT^*)\|+\|p_s(rT,rT^*)-\Psi_{rT}(A)\|
\end{split}
\end{equation*}
can be used to complete the proof.
\end{proof}
We remark  the map $\Psi_T$ in Theorem \ref{pure} is also completely positive  due to the fact that $\cS$ is an operator system.

We  introduce the {\it noncommutative domain algebra} $\cA(g)$ as the norm-closed non-self-adjoint algebra generated by $W_1,\ldots, W_n$ and the the identity. The following result provides a von Neumann \cite{vN} type inequality for the noncommutative domain $\overline{\cD^{pure}_{g^{-1}}(\cH)}$.

\begin{corollary}  \label{vN-disc}
If  $g$ is an admissible  free holomorphic function  and $T=(T_1,\ldots, T_n)\in \overline{\cD^{pure}_{g^{-1}}(\cH)}$, then
\begin{equation*}
\left\|\sum_{|\alpha|,|\beta|\leq m}d_{\alpha,\beta} T_\alpha T^*_\beta\right\|
\leq 
\left\|\sum_{|\alpha|,|\beta|\leq m}d_{\alpha,\beta}  W_\alpha W_\beta^*\right\|,\qquad m\in \NN, d_{\alpha,\beta}\in \CC.
\end{equation*}
 Moreover, there is a unital completely contractive homomorphism 
$$
\Psi_T:\cA(g)\to B(\cH),\quad \Psi_T(p(W)):=p(T),
$$
for any polynomial $p$  in $n$ noncommutative indeterminates.
\end{corollary}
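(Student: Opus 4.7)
The plan is to derive both statements directly from Theorem \ref{pure}, exploiting the fact that $\cA(g)$ sits inside the operator system $\cS = \overline{\text{span}}\{W_\alpha W_\beta^* : \alpha,\beta\in \FF_n^+\}$.

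First, I would observe that the inequality is essentially a reformulation of the contractivity part of Theorem \ref{pure}. Given a noncommutative polynomial $P = \sum_{|\alpha|,|\beta|\leq m} d_{\alpha,\beta} W_\alpha W_\beta^*$, which lies in $\cS$, the map $\Psi_T$ produced in Theorem \ref{pure} sends $P$ to $\sum_{|\alpha|,|\beta|\leq m} d_{\alpha,\beta} T_\alpha T_\beta^*$, and since $\Psi_T$ is contractive on $\cS$ the inequality follows immediately. In fact, an inspection of the proof of Theorem \ref{pure} shows that this inequality was the key estimate used to construct $\Psi_T$ in the first place.

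Next, to build the homomorphism, I would note that taking $\beta = g_0$ gives $W_\alpha = W_\alpha W_{g_0}^* \in \cS$, so every monomial in $W_1,\ldots,W_n$ lies in $\cS$. Consequently the norm-closed algebra $\cA(g)$ generated by $W_1,\ldots,W_n$ and the identity is contained in $\cS$, and the restriction $\Psi_T|_{\cA(g)}$ is a unital completely contractive linear map sending each polynomial $p({\bf W})$ to $p(T)$. It remains to verify multiplicativity. On polynomials $p({\bf W}), q({\bf W})$ in the generators, multiplicativity is automatic since polynomial evaluation is a homomorphism:
$$
\Psi_T\bigl(p({\bf W})q({\bf W})\bigr) = (pq)(T) = p(T)\,q(T) = \Psi_T(p({\bf W}))\,\Psi_T(q({\bf W})).
$$
Given arbitrary $A,B \in \cA(g)$, approximate them in norm by sequences of polynomials $p_s({\bf W}) \to A$ and $q_s({\bf W}) \to B$. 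Then $p_s({\bf W}) q_s({\bf W}) \to AB$ in norm, and by the norm continuity of $\Psi_T$ together with the joint norm continuity of multiplication in $B(\cH)$, one passes to the limit in the identity $\Psi_T(p_s({\bf W}) q_s({\bf W})) = \Psi_T(p_s({\bf W}))\,\Psi_T(q_s({\bf W}))$ to conclude $\Psi_T(AB) = \Psi_T(A)\Psi_T(B)$.

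There is no substantive obstacle here: the heavy lifting, namely the von Neumann type estimate and its extension to the operator system $\cS$, is already done in Theorem \ref{pure}. The corollary is just the observation that the restriction to the subalgebra $\cA(g)$ inherits multiplicativity from the polynomial case by a density and continuity argument, and the inequality is literally the case $A = \sum d_{\alpha,\beta} W_\alpha W_\beta^* \in \cS$ of the contractivity of $\Psi_T$.
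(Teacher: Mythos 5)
Your proposal is correct and follows essentially the same route the paper intends: the displayed inequality is precisely relation \eqref{vN} established in the proof of Theorem \ref{pure} (equivalently, the contractivity of $\Psi_T$ on the operator system $\cS$), and the homomorphism is obtained by restricting $\Psi_T$ to $\cA(g)\subset\cS$, checking multiplicativity on polynomials, and extending by norm continuity. No gaps.
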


\section{Noncommutative Hardy algebras and functional calculus}

In this section, we introduce the noncommutative Hardy algebras $F^\infty(g)$ associated with formal power series $g$ with strictly positive coefficients. We present some of their basic properties  and provide a $w^*$-continuous $F^\infty(g)$-functional calculus for pure (resp. completely non-coisometric) $n$-tuples of operators in the noncommutative domain $\cD_{g^{-1}}(\cH)$ associated with an admissible free holomorphic function $g$.

 Let $g= 1+\sum_{ |\alpha|\geq 1} b_\alpha
Z_\alpha$ be a formal power series such  that  the coefficients $\{b_\alpha\}_{\alpha\in \FF_n^+}$  are strictly positive  real numbers such that
 $\sup_{\alpha\in \FF_n^+} \frac{b_\alpha}{b_{ g_i\alpha}}<\infty$,
$ i\in \{1,\ldots, n\},
$ 
and let ${\bf W}=(W_1,\ldots, W_n)$ be the associated weighted left creation operators.
Let $\varphi({\bf W})=\sum\limits_{\beta\in \FF_n^+} c_\beta
W_\beta $ be a formal series such that,  
for each $\gamma\in \FF_n^+$,   
$
\sum_{\beta\in \FF_n^+}|c_\beta|^2 \frac{b_\gamma}{b_{\beta\gamma}}
 <\infty$
 and 
 $
 \sup_{p\in\cP, \|p\|\leq 1} \left\|\sum\limits_{\beta\in \FF_n^+}
c_\beta W_\beta (p)\right\|<\infty,
$
where $\cP$  is the linear span of $\{e_\alpha\}_{\alpha\in \FF_n^+}$.
In this case,  there is a unique bounded linear operator acting on the Fock space $F^2(H_n)$, which
we denote by $\varphi({\bf W})$, such that
$
\varphi({\bf W})p=\sum\limits_{\beta\in \FF_n^+} c_\beta
W_\beta (p)\quad \text{ for any } \ p\in \cP.
$
The set of all operators $\varphi({\bf W})\in B(F^2(H_n))$
satisfying the above-mentioned properties is denoted by
$F^\infty(g)$.  
One can prove that $F^\infty(g)$  is a Banach algebra, which we
call noncommutative Hardy algebra associated with  the formal power series $g$.

Let $g= 1+\sum_{ |\alpha|\geq 1} b_\alpha
Z_\alpha$ be a formal power series such  that   the coefficients $\{b_\alpha\}_{\alpha\in \FF_n^+}$  are strictly positive  real numbers   and 
 $\sup_{\alpha\in \FF_n^+} \frac{b_\alpha}{b_{ \alpha g_i}}<\infty$,
$i\in \{1,\ldots, n\}.
$ 
Define the {\it weighted right creation operators}
$\Lambda_i:F^2(H_n)\to F^2(H_n)$ by setting
$\Lambda_i:= R_i G_i$, $i\in\{1,\ldots, n\}$,  where $R_1,\ldots, R_n$ are
 the right creation operators on the full Fock space $F^2(H_n)$ and
 each  diagonal operator $G_i$  is defined by
$$
G_ie_\alpha=\sqrt{\frac{b_\alpha}{b_{ \alpha g_i}}} e_\alpha,\quad
 \alpha\in \FF_n^+.
$$
In this case, we have
\begin{equation}\label{WbWb-r}
\Lambda_\beta e_\gamma=
\frac {\sqrt{b_\gamma}}{\sqrt{b_{ \gamma \tilde\beta}}}
e_{ \gamma \tilde \beta} \quad
\text{ and }\quad
\Lambda_\beta^* e_\alpha =\begin{cases}
\frac {\sqrt{b_\gamma}}{\sqrt{b_{\alpha}}}e_\gamma& \text{ if }
\alpha=\gamma \tilde \beta \\
0& \text{ otherwise }
\end{cases}
\end{equation}
 for any $\alpha, \beta \in \FF_n^+$, where $\widetilde\beta:=g_{i_p}\cdots g_{i_1}$  is the reverse of  $\beta=g_{i_1}\cdots g_{i_p}\in \FF_n^+$. We use the notation ${\bf \Lambda}:=(\Lambda_1,\ldots, \Lambda_n)$. 
 If we assume that $\tilde{g}:=1+\sum_{ |\gamma|\geq 1} b_{\widetilde\gamma} Z_\gamma$ is an admissible  free holomorphic function, one can use relation \eqref{sum2} to show
that \begin{equation*} \Delta_{\widetilde g^{-1}}({\bf \Lambda},{\bf \Lambda}^*) =P_{\CC1}
  \quad \text{ and } \quad 
\sum_{\beta\in \FF_n^+}b_{\tilde\beta} \Lambda_\beta
\Delta_{\widetilde g^{-1}}({\bf \Lambda},{\bf \Lambda}^*)   \Lambda_\beta^* =I,
\end{equation*}
where the convergence is in the strong operator topology. the proof is similar to  that of Proposition \ref{W}.
As a consequence,  one can easily see that
$(\Lambda_1,\ldots,
\Lambda_n)\in \cD_{{\widetilde g}^{-1}}(F^2(H_n))$ and
$
U^* \Lambda_i U=W_i^{(\widetilde g)}$,
for $i\in\{1,\ldots,n\}$,
where $(W_1^{(\widetilde g)},\ldots, W_n^{(\widetilde g)})$ is the universal model associated with $\widetilde g$ and 
$U\in B(F^2(H_n))$ is the unitary operator
defined by $U e_\alpha:= e_{\tilde \alpha}$, $\alpha\in \FF_n^+$.

 Using the weighted right creation
operators ${\bf \Lambda}=(\Lambda_1,\ldots, \Lambda_n)$
 associated with the formal power series $g$ with the properties mentioned above, we  can define    the corresponding
     noncommutative  Hardy algebra $R^\infty(g)$.
Indeed, if $g({\bf \Lambda})=\sum\limits_{\beta\in \FF_n^+} c_{\tilde\beta
}\Lambda_\beta $ is a formal sum  such that,
for each $\gamma\in \FF_n^+$,   
$
\sum_{\beta\in \FF_n^+}|c_\beta|^2 \frac{b_\gamma}{b_{\gamma\beta}}
 <\infty$
and 
$
\sup_{p\in\cP, \|p\|\leq 1} \left\|\sum\limits_{\beta\in \FF_n^+}
 c_{\tilde\beta} \Lambda_\beta (p)\right\|<\infty,
$
then there is a unique bounded operator on $F^2(H_n)$, which we
 denote by $g({\bf \Lambda})$, such that
$
g({\bf \Lambda})p=\sum\limits_{\beta\in \FF_n^+}
c_{\tilde\beta} \Lambda_\beta (p)$ for any  $ p\in \cP.
$
The set of all operators $g({\bf \Lambda})\in
B(F^2(H_n))$
 satisfying the above-mentioned properties is denoted by $R^\infty(g)$. Note that $R^\infty(g)$ is  a Banach algebra.

\begin{proposition}\label{tilde-f2}    Let $g= 1+\sum_{ |\alpha|\geq 1} b_\alpha
Z_\alpha$ be a formal power series such  that   the coefficients
 $\{b_\alpha\}_{\alpha\in \FF_n^+}$  are strictly positive  real numbers and such that
$$
\sup_{\alpha\in \FF_n^+} \frac{b_\alpha}{b_{g_i \alpha}}<\infty, \qquad  \sup_{\alpha\in \FF_n^+} \frac{b_\alpha}{b_{ \alpha g_i}}<\infty,
\qquad i\in \{1,\ldots, n\},
$$
and let ~${\bf W}=(W_1,\ldots, W_n)$  $($resp.
${\bf \Lambda}:=(\Lambda_1,\ldots, \Lambda_n))$ be the  associated weighted left
(resp. right) creation operators.  Then  
\begin{enumerate}
\item[(i)] $F^\infty(g)=\{ \Lambda_1,\ldots, \Lambda_n\}^\prime =R^\infty(g)^\prime$, where $'$ stands for the commutant,
\item[(ii)] $F^\infty(g)''=F^\infty(g)$ and
$R^\infty(g)''=R^\infty(g)$.
\end{enumerate}
\end{proposition}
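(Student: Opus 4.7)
The plan is to establish the central identity $F^\infty(g)=\{\Lambda_1,\ldots,\Lambda_n\}'$ by direct computation and then deduce both parts from this and its symmetric counterpart $R^\infty(g)=\{W_1,\ldots,W_n\}'$. The starting observation is the termwise commutation $W_i\Lambda_j=\Lambda_j W_i$ for all $i,j$, verified by applying both sides to a basis vector $e_\alpha$ and checking via the explicit formulas for $W_i$ and $\Lambda_j$ that each equals $\tfrac{\sqrt{b_\alpha}}{\sqrt{b_{g_i\alpha g_j}}}\,e_{g_i\alpha g_j}$; induction then gives $W_\beta\Lambda_\gamma=\Lambda_\gamma W_\beta$ for all $\beta,\gamma\in\FF_n^+$. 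The inclusion $F^\infty(g)\subseteq\{\Lambda_i\}'$ follows because for $\varphi({\bf W})=\sum c_\beta W_\beta$ and $p\in\cP$ we have $\varphi({\bf W})\Lambda_i p=\sum c_\beta W_\beta \Lambda_i p=\Lambda_i\sum c_\beta W_\beta p=\Lambda_i\varphi({\bf W})p$, and density of $\cP$ in $F^2(H_n)$ together with boundedness of $\varphi({\bf W})$ and $\Lambda_i$ extends the identity to all of $F^2(H_n)$.

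For the reverse inclusion $\{\Lambda_i\}'\subseteq F^\infty(g)$, I take $A\in B(F^2(H_n))$ with $A\Lambda_i=\Lambda_i A$, set $\varphi:=A(1)$ with Fourier expansion $\varphi=\sum_\mu\hat c_\mu e_\mu$, and define $c_\mu:=\sqrt{b_\mu}\,\hat c_\mu$. The key elementary identity $\Lambda_{\widetilde\gamma}(1)=\tfrac{1}{\sqrt{b_\gamma}}\,e_\gamma$, read off from the formula for $\Lambda_\beta$, together with the commutation $A\Lambda_{\widetilde\gamma}=\Lambda_{\widetilde\gamma}A$ (from commuting with each $\Lambda_i$) gives
\begin{equation*}
Ae_\gamma=\sqrt{b_\gamma}\,A\Lambda_{\widetilde\gamma}(1)=\sqrt{b_\gamma}\,\Lambda_{\widetilde\gamma}\varphi=\sum_\mu c_\mu\,\frac{\sqrt{b_\gamma}}{\sqrt{b_{\mu\gamma}}}\,e_{\mu\gamma},
\end{equation*}
which coincides with $\sum_\mu c_\mu W_\mu e_\gamma$. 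By linearity, $Ap=\sum_\mu c_\mu W_\mu p$ on $\cP$. The two defining conditions of $F^\infty(g)$ drop out of the boundedness of $A$: orthonormality of the $\{e_{\mu\gamma}\}_{\mu\in\FF_n^+}$ (for fixed $\gamma$) gives $\sum_\mu|c_\mu|^2\,\tfrac{b_\gamma}{b_{\mu\gamma}}=\|Ae_\gamma\|^2\le\|A\|^2$, and $\|\sum c_\mu W_\mu p\|=\|Ap\|\le\|A\|\|p\|$, so $A=\varphi({\bf W})\in F^\infty(g)$.

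The symmetric identity $R^\infty(g)=\{W_1,\ldots,W_n\}'$ is obtained by repeating this argument with the roles of left and right weighted shifts interchanged (equivalently, by conjugating with the unitary $U\colon e_\alpha\mapsto e_{\widetilde\alpha}$ noted in the excerpt, which intertwines $\Lambda_i$ with the left universal model associated with $\widetilde g$). To finish (i), the inclusion $R^\infty(g)'\subseteq\{\Lambda_i\}'=F^\infty(g)$ is immediate since $\Lambda_i\in R^\infty(g)$. For the reverse, given $\varphi({\bf W})\in F^\infty(g)$ and $\psi({\bf\Lambda})\in R^\infty(g)=\{W_i\}'$, approximate $\varphi({\bf W})$ in SOT by the Ces\`aro polynomials $\varphi_N({\bf W}):=\sum_{s\le N}\bigl(1-\tfrac{s}{N+1}\bigr)\sum_{|\alpha|=s}c_\alpha W_\alpha$ (whose SOT-convergence to $\varphi({\bf W})$ is part of the Section~2 development of the Hardy algebra). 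Each $\varphi_N({\bf W})$ is a polynomial in $W_1,\ldots,W_n$, hence commutes with $\psi({\bf\Lambda})\in\{W_i\}'$; taking SOT-limits yields $\varphi({\bf W})\psi({\bf\Lambda})=\psi({\bf\Lambda})\varphi({\bf W})$, so $F^\infty(g)\subseteq R^\infty(g)'$.

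Part (ii) is then immediate from (i) via the general identity $X'''=X'$: since $F^\infty(g)=R^\infty(g)'$ is already a commutant, $F^\infty(g)''=R^\infty(g)'''=R^\infty(g)'=F^\infty(g)$, and similarly $R^\infty(g)''=R^\infty(g)$. The main technical obstacle is precisely the commutation step in the second equality of (i), where one must move from the termwise commutation $W_\beta\Lambda_\gamma=\Lambda_\gamma W_\beta$ to commutation of the formal-series operators, which are only defined by bounded extension off $\cP$; the Ces\`aro approximation from Section~2 is the cleanest tool, but a direct approach using the expansion $\psi({\bf\Lambda})p=\sum_\gamma d_{\widetilde\gamma}\Lambda_\gamma p$ on polynomials, combined with the established commutation of $\varphi({\bf W})$ with each $\Lambda_\gamma$, also works.
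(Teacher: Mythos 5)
Your proof is correct and follows essentially the same route as the paper: the inclusion $F^\infty(g)\subseteq\{\Lambda_i\}'$ from termwise commutation, and the reverse inclusion via $Ae_\gamma=\sqrt{b_\gamma}\,A\Lambda_{\widetilde\gamma}(1)=\sqrt{b_\gamma}\,\Lambda_{\widetilde\gamma}A(1)=\sum_\mu c_\mu W_\mu e_\gamma$ together with boundedness of $A$, exactly as in the paper, with the symmetric statement $R^\infty(g)=\{W_i\}'$ obtained the same way. The remaining steps (the identification $\{\Lambda_i\}'=R^\infty(g)'$ and part (ii) via $X'''=X'$) are dismissed by the paper as straightforward, and your completion of them — either by Ces\`aro approximation (legitimate here, since Proposition \ref{homo} and the Fourier representation are available without circularity, though it is formally a forward reference to Theorem \ref{analytic}) or by the direct termwise commutation you sketch at the end — is sound.
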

\begin{proof}   First, note that the conditions  in the hypothesis are necessary and sufficient  to ensure that 
the operators  $W_i$ and $ \Lambda_i$ are bounded.
Since
$W_i\Lambda_j=\Lambda_j W_i$ for any
$i,j\in\{1,\ldots,n\}$, it is clear that $F^\infty(g)\subset\{ \Lambda_1,\ldots, \Lambda_n\}^\prime$. To prove the reverse inclusion, let $A\in
\{ \Lambda_1,\ldots, \Lambda_n\}^\prime$. Since $A(1)\in F^2(H_n)$, we have
$A(1)=\sum_{\beta\in \FF_n^+} c_{
\beta}\frac{1}{\sqrt{b_{ \beta}}} e_{ \beta}$ for some
coefficients $\{c_\beta\}_{\FF_n^+}\subset \CC$ with $\sum_{\beta\in\FF_n^+}
|c_\beta|^2 \frac{1}{b_\beta}<\infty$. On the other hand, since $
A\Lambda_i=\Lambda_iA$ for $i\in\{1,\ldots,n\}$, relations \eqref{WbWb}
and \eqref{WbWb-r} imply
\begin{equation*}
\begin{split}
Ae_\alpha &=\sqrt{b_\alpha}A\Lambda_{\tilde\alpha}(1)=\sqrt{b_\alpha}\Lambda_{\tilde\alpha}
A(1)
=\sum_{\beta\in \FF_n^+} c_{ \beta}
\frac{\sqrt{b_\alpha}}{\sqrt{b_{ \beta\alpha}}} e_{
 \beta\alpha}=\sum_{\beta\in \FF_n^+} c_{ \beta} W_\beta(e_\alpha).
\end{split}
\end{equation*}
Therefore,  for each $\alpha\in \FF_n^+$,   
$
\sum_{\beta\in \FF_n^+}|c_\beta|^2 \frac{b_\alpha}{b_{\beta\alpha}}
 <\infty$
and  $A(q)=\sum_{\beta\in \FF_n} c_{ \beta}
W_\beta(q)$ for any polynomial $q\in \cP$. Since $A$ is a bounded operator, we must have  $A\in F^\infty(g)$, which shows that $F^\infty(g)=\{ \Lambda_1,\ldots, \Lambda_n\}^\prime$. Similarly, one can show that $R^\infty(g)=\{ W_1,\ldots, W_n\}^\prime$.
The rest of the proof  is straightforward.
\end{proof}
 
\begin{remark} Each $\varphi\in F^\infty(g)$ has a unique   Fourier representation $\sum_{\alpha\in \FF_n^+} c_\alpha W_\alpha$.
\end{remark}

If $g= 1+\sum_{ |\alpha|\geq 1} b_\alpha
Z_\alpha$  is  a formal power series  satisfying the conditions of Proposition \ref{tilde-f2},
   one can  prove the following properties for the noncommutative Hardy algebra $F^\infty(g)$.   Since the proof  is similar to the corresponding result obtained by  Davidson and Pitts \cite{DP2}, in the particular case when $g^{-1}=1-Z_1-\cdots -Z_n$,  and the corresponding result from  \cite{Po-domains}, we omit it.
 
\begin{enumerate}
\item[(i)]The Hardy algebra $F^\infty(g)$ is inverse closed.
\item[(ii)] The only normal elements in $F^\infty(g)$ are the
scalars.
\item[(iii)] Every element  $A\in F^\infty(g)$
has
 its spectrum
$\sigma (A)\neq \{0\}$ and it is  injective.
\item[(iv)] The algebra $F^\infty(g)$  contains no non-trivial
idempotents.
\item[(v)] If  $A\in F^\infty(g)$, $n\geq 2$, then
$\sigma (A)=\sigma_e (A)$.
\end{enumerate}

Let  $\Gamma:\TT\to B(F^2(H_n))$, where $\TT:=\{z\in \CC:\  |z|=1\}$,  be the strongly continuous unitary representation  of the torus 
defined by
$
\Gamma(e^{i\theta})f:=\sum_{\alpha\in \FF_n^+} e^{i |\alpha| \theta} c_\alpha e_\alpha
$
for any $f=\sum_{\alpha\in \FF_n^+}  c_\alpha e_\alpha$ in $F^2(H_n)$. We have the orthogonal decomposition
$F^2(H_n)=\bigoplus_{p\in \ZZ} \cE_p$, where the spectral subspace $\cE_p$ is the image of the orthogonal projection
$Q_p\in B(F^2(H_n))$ defined by
$
Q_p:=\frac{1}{2\pi} \int_0^{2\pi} e^{-ip\theta} \Gamma(e^{i\theta}) d\theta,
$
where the integral is defined as a weak integral and the integrant is a continuous function in the strong operator topology. We note that $Q_p=0$ and $\cE_p=\{0\}$ whenever $p<0$. The spectral subspaces of $\Gamma$ are
$$
\cE_p=\{f\in F^2(H_n):\ \Gamma(e^{i\theta})f=e^{ip \theta } f\}=\text{\rm span}\{ e_\alpha:\ \alpha\in \FF_n^+, |\alpha|=p\}
$$
for $p\in \NN\cup\{0\}$.
Let $T\in B(F^2(H_n)\otimes \cK)$ and $s\in \ZZ$. The $s$-homogeneous part of $T$ is defined as the operator $T_s\in B(F^2(H_n)\otimes \cK)$ given by
\begin{equation}
\label{Ts}
T_s:=\frac{1}{2\pi} \int_0^{2\pi} e^{-is\theta}( \Gamma(e^{i\theta})\otimes I_\cK)T( \Gamma(e^{i\theta})\otimes I_\cK)^* d\theta.
\end{equation}
Hence $\|T_s\|\leq \|T\|$.
Note that $(T^*)_s=(T_{-s})^*$ and   $ \Gamma(e^{i\theta}) ^*|_{\cE_p}=e^{-ip\theta} I_{\cE_p}$ for every $p\in \ZZ$. On the other hand, for every $f\in \cE_p\otimes  \cK$,
$$
T_sf =\frac{1}{2\pi} \int_0^{2\pi} e^{-i(s+p)\theta}(\Gamma(e^{i\theta})\otimes I_\cK)T fd\theta
=(Q_{s+p}\otimes I_\cK)Tf.
$$
Consequently, $T_s (\cE_p\otimes \cK)\subset  \cE_{s+p}\otimes \cK$ for all $s,p\in \ZZ$.
An operator $A\in B( F^2(H_n)\otimes \cK)$ is said to be {\it homogeneous} of degree $s\in \ZZ$ if
$A( \cE_p\otimes \cK)\subset  \cE_{s+p}\otimes \cK$ for every $p\in \ZZ$.

We recall, from \cite{K}, that if $X$ is a Banach space, $\varphi:\TT\to X$ is  a continuous function , and $\kappa_N$ is a summability kernel, then
$$
\varphi(0)=\lim_{N\to\infty} \frac{1}{2\pi} \int_0^{2\pi} \kappa_N(e^{i\theta})\varphi(e^{i\theta}) d\theta.
$$

\begin{proposition}  \label{homo} If  $T\in B( F^2(H_n)\otimes \cK)$ and $\{T_s\}_{s\in \ZZ}$ are the homogeneous parts of $T$, then
$$
Tf= \lim_{N\to\infty}\sum_{s\in \ZZ, |s|\leq N} \left(1-\frac{|s|}{N+1}\right) T_s f
$$
for any $f\in F^2(H_n)\otimes  \cK$  and
$$
\left\|\sum_{s\in \ZZ, |s|\leq N} \left(1-\frac{|s|}{N+1}\right) T_s \right\|\leq \|T\|,\qquad N\in \NN.
$$
\end{proposition}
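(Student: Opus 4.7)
The plan is to recognize the expression $\sum_{|s|\leq N}(1-|s|/(N+1))T_s$ as a Fejér (Cesàro) mean of an operator-valued Fourier series coming from the torus action $\Gamma$, and then invoke both the Katznelson lemma cited just above the proposition (for pointwise convergence) and positivity of the Fejér kernel (for the norm bound).

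First I would set $U_\theta := \Gamma(e^{i\theta})\otimes I_\cK$ and define $F:\TT\to B(F^2(H_n)\otimes\cK)$ by $F(e^{i\theta}):=U_\theta T U_\theta^*$. Since $\Gamma$ is strongly continuous, $\theta\mapsto U_\theta f$ is norm continuous for every $f$, and hence so is $\theta\mapsto F(e^{i\theta})f = U_\theta T U_\theta^* f$ (a product of a norm-continuous vector-valued map with uniformly bounded unitaries). Thus $F$ is a continuous function $\TT\to B(F^2(H_n)\otimes\cK)$ in the strong operator topology. By the very definition \eqref{Ts}, the operator $T_s$ is the $s$-th Fourier coefficient of $F$, namely
$$
T_s=\frac{1}{2\pi}\int_0^{2\pi} e^{-is\theta} F(e^{i\theta})\,d\theta,
$$
interpreted as a weak integral of a strongly continuous integrand.

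Next I would recognize that the Fejér kernel is $\kappa_N(e^{i\theta})=\sum_{|s|\leq N}\bigl(1-\tfrac{|s|}{N+1}\bigr)e^{is\theta}$, so that by Fubini,
$$
\sum_{|s|\leq N}\left(1-\frac{|s|}{N+1}\right)T_s=\frac{1}{2\pi}\int_0^{2\pi} \kappa_N(e^{i\theta}) F(e^{i\theta})\,d\theta.
$$
For the pointwise statement, fix $f\in F^2(H_n)\otimes\cK$ and consider the continuous function $\varphi:\TT\to F^2(H_n)\otimes\cK$, $\varphi(e^{i\theta}):=F(e^{i\theta})f$. Since $\varphi(1)=U_0 T U_0^* f = Tf$, applying the cited Katznelson result (Fejér summation of a continuous Banach-space-valued function at the point $\theta=0$ with the summability kernel $\kappa_N$) gives
$$
Tf=\varphi(1)=\lim_{N\to\infty}\frac{1}{2\pi}\int_0^{2\pi}\kappa_N(e^{i\theta})\varphi(e^{i\theta})\,d\theta=\lim_{N\to\infty}\sum_{|s|\leq N}\left(1-\frac{|s|}{N+1}\right)T_s f.
$$

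For the norm bound, I would use that $\kappa_N\geq 0$ and $\frac{1}{2\pi}\int_0^{2\pi}\kappa_N(e^{i\theta})\,d\theta=1$, together with the elementary identity $\|F(e^{i\theta})\|=\|U_\theta T U_\theta^*\|=\|T\|$ (the $U_\theta$ are unitary). Taking any $u,v$ with $\|u\|=\|v\|=1$,
$$
\left|\left\langle\sum_{|s|\leq N}\left(1-\tfrac{|s|}{N+1}\right)T_s u,v\right\rangle\right|\leq\frac{1}{2\pi}\int_0^{2\pi}\kappa_N(e^{i\theta})|\langle F(e^{i\theta})u,v\rangle|\,d\theta\leq\|T\|,
$$
which gives the desired estimate. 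There is no serious obstacle here; the only point to handle carefully is the interchange of the integral and the finite sum (trivial) and the justification that $F$ is strongly continuous so that all weak/strong integrals make sense and Katznelson's lemma applies to the continuous vector-valued map $\theta\mapsto F(e^{i\theta})f$.
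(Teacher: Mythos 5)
Your proposal is correct and follows essentially the same route as the paper: both identify the Cesàro means with the Fejér-kernel integral of $\theta\mapsto(\Gamma(e^{i\theta})\otimes I_\cK)T(\Gamma(e^{i\theta})\otimes I_\cK)^*$, apply the cited Katznelson summability result at $\theta=0$ for the pointwise limit, and use positivity and normalization of the Fejér kernel together with unitarity of $\Gamma(e^{i\theta})\otimes I_\cK$ for the norm bound. The only cosmetic difference is that you estimate via $|\langle\cdot\,u,v\rangle|$ while the paper bounds $\|\sigma_N(f)\|$ directly; the substance is identical.
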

\begin{proof} Let $f\in  F^2(H_n)\otimes \cK$  and $\psi:\RR\to F^2(H_n)\otimes  \cK$  be the function defined by
$$
\psi(\theta):=(  \Gamma(e^{i\theta}) \otimes I_\cK)T( \Gamma(e^{i\theta})\otimes I_\cK)^* )f.
$$
Considering the Fej\' er kernel 
$\kappa_N(e^{i\theta}):=\sum_{s\in \ZZ, |s|\leq N} \left(1-\frac{|s|}{N+1}\right)  e^{is\theta}$ and using the result preceding the proposition, we deduce that
\begin{equation*}
\begin{split}
Tf&=\psi(0)=\lim_{N\to\infty} \frac{1}{2\pi} \int_0^{2\pi}  \kappa_N(e^{i\theta})\psi(\theta)d\theta\\
&=\lim_{N\to \infty} \sum_{s\in \ZZ, |s|\leq N}\left(1-\frac{|s|}{N+1}\right)  \frac{1}{2\pi} \int_0^{2\pi} 
e^{-is\theta}( \Gamma(e^{i\theta}) \otimes I_\cK)T( \Gamma(e^{i\theta})\otimes I_\cK)^* f d\theta\\
&=\lim_{N\to \infty} \sum_{s\in \ZZ, |s|\leq N}\left(1-\frac{|s|}{N+1}\right)T_sf
\end{split}
\end{equation*}
for any $f\in  F^2(H_n)\otimes \cK$.
Due to  relation \eqref{Ts}, we obtain
$$
s_k(f):=\sum_{s=-k}^k T_sf= \frac{1}{2\pi} \int_0^{2\pi} \left(\sum_{s=-k}^k e^{-is\theta}\right) ( \Gamma(e^{i\theta}) \otimes I_\cK)T( \Gamma(e^{i\theta})\otimes I_\cK)^*fd\theta
$$
and
\begin{equation*}
\begin{split}
\sigma_N(f)&:=\frac{1}{N}(s_0(f)+\cdots +s_{N-1}(f))
=\sum_{s\in \ZZ, |s|\leq N} \left(1-\frac{|s|}{N+1}\right) T_s f\\
&=\frac{1}{2\pi} \int_0^{2\pi} \kappa_N(e^{i\theta}) (  \Gamma(e^{i\theta})\otimes I_\cK )T( \Gamma(e^{i\theta})\otimes I_\cK)^*fd\theta
\end{split}
\end{equation*}
for any $f\in  F^2(H_n)\otimes \cK$,
where  $\kappa_N(e^{i\theta})$ is the Fej\' er kernel. Hence, we deduce that
$$
\left\|\sum_{s\in \ZZ, |s|\leq N} \left(1-\frac{|s|}{N+1}\right) T_s f \right\|\leq \frac{1}{2\pi} \int_0^{2\pi} \kappa_N(e^{i\theta}) \left\|(  \Gamma(e^{i\theta}) \otimes I_\cK)T( \Gamma(e^{i\theta})\otimes I_\cK)^*f\right\|d\theta
\leq 
\|T\|\|f\|
$$
for any $N\in \NN$ and $f\in  F^2(H_n)\otimes \cK$. This completes the proof.
\end{proof}

We remark that, as in the proof of Proposition  \ref{tilde-f2}, one can show that any operator in the commutant 
$\{ \Lambda_1\otimes I_\cK,\ldots,  \Lambda_n\otimes I_\cK\}'$ has a unique Fourier representation 
$\sum_{\alpha\in \FF_n^+}  W_\alpha\otimes C_{(\alpha)}$, where  $\{C_{(\alpha)}\}\in B(\cK)$.

\begin{theorem} \label{analytic} Let $\{b_\alpha\}_{\alpha\in \FF_n^+}$ be  a collection of strictly positive  real numbers such that
$$
\sup_{\alpha\in \FF_n^+} \frac{b_\alpha}{b_{g_i \alpha}}<\infty,\quad \text{ and }\quad  \sup_{\alpha\in \FF_n^+} \frac{b_\alpha}{b_{ \alpha g_i}}<\infty,
\qquad i\in \{1,\ldots, n\},
$$
and let ~$(W_1,\ldots, W_n)$  $($resp.
$(\Lambda_1,\ldots, \Lambda_n)$ be the  associated weighted left
(resp. right) creation operators.
Let $A\in B( F^2(H_n)\otimes \cK)$ be such that 
$
A( \Lambda_i\otimes I_\cK)=( \Lambda_i\otimes I_\cK)A$,  $i\in \{1,\ldots, n\}$.
If $A$ has the Fourier representation $\sum_{\alpha\in \FF_n^+}  W_\alpha\otimes C_{(\alpha)}$, then
$$
A= \text{\rm SOT-}\lim_{N\to\infty}\sum_{ |\alpha|\leq N} \left(1-\frac{|\alpha|}{N+1}\right) ( W_\alpha\otimes C_{(\alpha)}), 
$$
$$
\left\|\sum_{\alpha\in \FF_n^+, |\alpha|=k}  W_\alpha\otimes C_{(\alpha)}\right\|\leq \|A\|,\qquad k\in \NN,
$$
   and
$$
\left\|\sum_{ |\alpha|\leq N} \left(1-\frac{|\alpha|}{N+1}\right) ( W_\alpha\otimes C_{(\alpha)})  \right\|\leq \|A\|,\qquad N\in \NN.
$$

\end{theorem}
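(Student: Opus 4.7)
The strategy is to apply Proposition \ref{homo} to $A$ after identifying each homogeneous part $A_s$ with the $s$-th block $\sum_{|\alpha|=s} W_\alpha\otimes C_{(\alpha)}$ of the Fourier representation. Once that identification is in place, all three conclusions of the theorem translate directly from the Fej\'er-type bounds already proved in Proposition \ref{homo}.

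First I would record how the torus representation $\Gamma$ interacts with the operators at hand. Since $W_\alpha$ sends $\cE_p$ into $\cE_{p+|\alpha|}$ by \eqref{WbWb}, and $\Lambda_i$ sends $\cE_p$ into $\cE_{p+1}$ by \eqref{WbWb-r}, a direct computation on the orthonormal basis gives
\[
(\Gamma(e^{i\theta})\otimes I_\cK)(W_\alpha\otimes C_{(\alpha)})(\Gamma(e^{i\theta})\otimes I_\cK)^*=e^{i|\alpha|\theta}(W_\alpha\otimes C_{(\alpha)})
\]
and $(\Gamma(e^{i\theta})\otimes I_\cK)(\Lambda_i\otimes I_\cK)(\Gamma(e^{i\theta})\otimes I_\cK)^*=e^{i\theta}(\Lambda_i\otimes I_\cK)$. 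Using the second identity together with $A(\Lambda_i\otimes I_\cK)=(\Lambda_i\otimes I_\cK)A$, a short manipulation of the weak integral defining $A_s$ in \eqref{Ts} shows $A_s(\Lambda_i\otimes I_\cK)=(\Lambda_i\otimes I_\cK)A_s$ for every $s\in \ZZ$, so $A_s$ again lies in $\{\Lambda_1\otimes I_\cK,\ldots,\Lambda_n\otimes I_\cK\}'$.

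By the remark preceding the theorem, $A_s$ therefore admits a unique Fourier representation $\sum_\beta W_\beta\otimes C'_{(\beta)}$. To pin down the coefficients I would evaluate $A_s$ on the vacuum $1\otimes k$: since the Fourier coefficients of $A$ are determined by $A(1\otimes k)=\sum_\alpha \tfrac{1}{\sqrt{b_\alpha}}\, e_\alpha\otimes C_{(\alpha)}k$, applying the integral formula \eqref{Ts} and using homogeneity isolates only the terms with $|\alpha|=s$, yielding $A_s(1\otimes k)=\sum_{|\alpha|=s}\tfrac{1}{\sqrt{b_\alpha}}\,e_\alpha\otimes C_{(\alpha)}k$. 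Uniqueness of the Fourier representation of $A_s$ then forces $A_s=\sum_{|\alpha|=s}W_\alpha\otimes C_{(\alpha)}$ for $s\geq 0$ and $A_s=0$ for $s<0$.

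With that identification, the three conclusions are immediate from Proposition \ref{homo}: the bound $\|A_s\|\leq\|A\|$ gives the second inequality, the Fej\'er estimate gives the third, and the SOT-convergence statement becomes
\[
Af=\lim_{N\to\infty}\sum_{s=0}^{N}\Bigl(1-\tfrac{s}{N+1}\Bigr)A_sf=\lim_{N\to\infty}\sum_{|\alpha|\leq N}\Bigl(1-\tfrac{|\alpha|}{N+1}\Bigr)(W_\alpha\otimes C_{(\alpha)})f.
\]
The main obstacle is the middle paragraph: proving that $A_s$ remains in the commutant of the weighted right creation operators and then using uniqueness of the Fourier expansion to match its coefficients with those of $A$. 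Once this bookkeeping is settled, the theorem is a transparent consequence of the Ces\`aro convergence established for the torus action.
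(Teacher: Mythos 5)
Your proposal is correct and follows essentially the same route as the paper: identify the $s$-homogeneous part $A_s$ (defined via the gauge action $\Gamma$) with the degree-$s$ block $\sum_{|\alpha|=s}W_\alpha\otimes C_{(\alpha)}$ of the Fourier series, then invoke the Ces\`aro/Fej\'er bounds of Proposition \ref{homo}. The only (harmless) difference is in the bookkeeping for that identification: the paper computes $A_sf=(Q_{s+p}\otimes I_\cK)Af$ directly on each basis vector $f=e_\beta\otimes h$, whereas you first check that $A_s$ stays in the commutant of the $\Lambda_i\otimes I_\cK$ and then pin down its Fourier coefficients by evaluating on the vacuum.
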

\begin{proof}
First, we prove that 
 the $s$-homogeneous part of $A$ is 
$
A_s=\sum_{|\alpha|=s}  W_\alpha\otimes C_{(\alpha)},\ \text{if } \ s\geq 0,
$
and $A_s=0$ if $s<0$.
 Let $f\in \cE_p\otimes  \cK$, $p\in \ZZ$, and note that
 \begin{equation*}
 \begin{split}
 A_s f&=\frac{1}{2\pi} \int_0^{2\pi} e^{-i(s+p)}( \Gamma(e^{i\theta})\otimes I_\cK)Af d\theta
 =( Q_{s+p}\otimes I_\cK)Af\\
 &= ( Q_{s+p}\otimes I_\cK)\left(\sum_{k=0}^\infty \sum_{|\alpha|=k} ( W_\alpha\otimes C_{(\alpha)}) f\right).
 \end{split}
 \end{equation*}
If $f= e_\beta\otimes h$ with $|\beta|=p\geq 0$ and $s\geq0$, then 
$$
A_s( e_\alpha\otimes h)=(Q_{s+p}\otimes I_\cK)\left(\sum_{k=0}^\infty \sum_{|\alpha|=k} ( W_\alpha \times C_{(\alpha)}) f\right)=\sum_{|\alpha|=s}  W_\alpha e_\beta \otimes C_{(\alpha)}h.
$$
If $s<0$, then  $s+p<k+p$ for any $k\geq 0$ and, consequently, $A_s( e_\alpha\otimes h)=0$.  
Now, using Proposition  \ref{homo}, we complete the proof. \end{proof}

\begin{theorem}  \label{closure}  Let  $g= 1+\sum_{ |\alpha|\geq 1} b_\alpha
Z_\alpha$ be  a free holomorphic function  with $b_\alpha>0$, if $|\alpha|\geq 1$, and 
  such that
$$
\sup_{\alpha\in \FF_n^+} \frac{b_\alpha}{b_{g_i \alpha}}<\infty,\quad \text{ and }\quad  \sup_{\alpha\in \FF_n^+} \frac{b_\alpha}{b_{ \alpha g_i}}<\infty,
\qquad i\in \{1,\ldots, n\},
$$
and let ~${\bf W}=(W_1,\ldots, W_n)$   be the  associated weighted left
  creation operators.  Then   the noncommutative  Hardy space $F^\infty(g)$  satisfies the relation
  $$
  F^\infty(g)=\overline{\cP({\bf W})}^{SOT}=\overline{\cP({\bf W})}^{WOT}=\overline{\cP({\bf W})}^{w*},
  $$
where $\cP({\bf W})$ stands for the algebra of all polynomials in $W_1,\ldots, W_n$ and the identity.
Moreover,  $F^\infty(g)$ is the sequential SOT-(resp. WOT-, w*-) closure of   $\cP({\bf W})$.
\end{theorem}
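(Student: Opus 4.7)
The plan is to leverage Proposition \ref{tilde-f2} together with Theorem \ref{analytic}. By Proposition \ref{tilde-f2}, $F^\infty(g) = \{\Lambda_1,\dots,\Lambda_n\}'$, and a commutant in $B(F^2(H_n))$ is automatically closed in the $w^*$-topology (since for each $i$, the map $A\mapsto A\Lambda_i - \Lambda_i A$ is $w^*$-continuous on $B(F^2(H_n))$); hence $F^\infty(g)$ is $w^*$-closed, and therefore also WOT- and SOT-closed. Since each $W_i$ commutes with each $\Lambda_j$, we have $\cP(\mathbf W)\subseteq F^\infty(g)$, which gives the easy inclusions
$$\overline{\cP(\mathbf W)}^{SOT}\subseteq \overline{\cP(\mathbf W)}^{WOT}\subseteq \overline{\cP(\mathbf W)}^{w*}\subseteq F^\infty(g).$$

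For the reverse inclusion, I would fix $\varphi(\mathbf W)=\sum_{\alpha\in\FF_n^+} c_\alpha W_\alpha \in F^\infty(g)$. Since $\varphi(\mathbf W)$ commutes with every $\Lambda_i$, Theorem \ref{analytic} applied with $\cK=\CC$ yields
$$\varphi(\mathbf W) = \text{SOT-}\lim_{N\to\infty}\sigma_N, \qquad \sigma_N:=\sum_{|\alpha|\leq N}\left(1-\frac{|\alpha|}{N+1}\right)c_\alpha W_\alpha,$$
with the uniform bound $\|\sigma_N\|\leq \|\varphi(\mathbf W)\|$ for every $N\in\NN$. Each $\sigma_N$ is a polynomial in $W_1,\dots,W_n$ (and the identity), so $\sigma_N\in\cP(\mathbf W)$. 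This already places $\varphi(\mathbf W)$ in the sequential SOT-closure of $\cP(\mathbf W)$, hence in the sequential WOT-closure as well, since SOT-convergence implies WOT-convergence.

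To upgrade this to the sequential $w^*$-closure, I would invoke the standard fact that on norm-bounded subsets of $B(F^2(H_n))$ the WOT and the $w^*$-topology coincide. The sequence $\{\sigma_N\}$ is norm-bounded by $\|\varphi(\mathbf W)\|$ and converges WOT to $\varphi(\mathbf W)$, so it also converges in the $w^*$-topology. Consequently
$$F^\infty(g)\subseteq \overline{\cP(\mathbf W)}^{\text{seq-}SOT}\subseteq \overline{\cP(\mathbf W)}^{\text{seq-}WOT}\subseteq \overline{\cP(\mathbf W)}^{\text{seq-}w*}\subseteq \overline{\cP(\mathbf W)}^{w*}\subseteq F^\infty(g),$$
forcing equality at every stage and giving both assertions of the theorem simultaneously.

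The main substantive input is Theorem \ref{analytic}, which guarantees both the explicit Cesàro approximants and the norm control $\|\sigma_N\|\leq \|\varphi(\mathbf W)\|$; once that is in hand the rest is topological bookkeeping. The only subtlety worth being careful about is the passage from WOT to $w^*$, where the uniform bound on $\{\sigma_N\}$ is what makes the two topologies agree on the sequence under consideration.
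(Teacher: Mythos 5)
Your proposal is correct and follows essentially the same route as the paper: Proposition \ref{tilde-f2} gives that $F^\infty(g)$ is a commutant and hence $w^*$-, WOT-, and SOT-closed, while Theorem \ref{analytic} supplies the uniformly norm-bounded Ces\`aro polynomials $\sigma_N$ converging SOT to any $\varphi({\bf W})\in F^\infty(g)$, and the bounded-set coincidence of WOT and $w^*$ finishes the argument. The only difference is that you spell out the topological bookkeeping more explicitly than the paper does; the substance is identical.
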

\begin{proof}  Due to Proposition \ref{tilde-f2},   the noncommutative Hardy algebra $F^\infty(g)$ is WOT-(resp. SOT-, w*-) closed. On the other hand, Theorem \ref{analytic} implies 
$F^\infty(g)\subset \overline{\cP({\bf W})}^{SOT}$ and $F^\infty(g)\subset \overline{\cP({\bf W})}^{w*}$.
Combining these results, we obtain
$$
F^\infty(g)\subset \overline{\cP({\bf W})}^{SOT}\subset \overline{\cP({\bf W})}^{WOT}\subset F^\infty(g)
\quad \text{
and  }\quad 
F^\infty(g)\subset \overline{\cP({\bf W})}^{w*}\subset F^\infty(g),
$$
which implies
$$
  F^\infty(g)=\overline{\cP({\bf W})}^{SOT}=\overline{\cP({\bf W})}^{WOT}=\overline{\cP({\bf W})}^{w*}.
  $$
Moreover,
according to Theorem \ref{analytic}, each element in $F^\infty(g)$ is the  SOT-(resp. WOT-, w*-) limit of a sequence of polynomials in $\cP({\bf W})$. Now, using the results above, we conclude that $F^\infty(g)$ is the sequential SOT-(resp. WOT-, w*-) closure of all polynomials  $\cP({\bf W})$. The proof is complete.
\end{proof}

An operator  $A\in B( F^2(H_n)\otimes \cK)$ is called {\it multi-analytic} with respect to  $ \Lambda_1\otimes I_\cK,\ldots,  \Lambda_n\otimes I_\cK$  if 
$$
A( \Lambda_i\otimes I_\cK)=( \Lambda_i\otimes I_\cK) A,\qquad i\in\{1,\ldots, n\}.
$$
An extension of Theorem \ref{closure} is the following 
\begin{proposition}  \label{multi} Under the hypothesis of Theorem \ref{closure}, the set of all  multi-analytic operators with respect to $ \Lambda_1\otimes I_\cK,\ldots,  \Lambda_n\otimes I_\cK$ coincides with
$ F^\infty(g)\bar{\otimes}_{min}B(\cK)$, the WOT-closure of the spatial tensor product $ F^\infty(g){\otimes}_{min}B(\cK)$. Moreover,
$$
 F^\infty(g)\bar{\otimes}_{min}B(\cK)=\overline{  \cP({\bf W}){\otimes}_{min}B(\cK)}^{SOT}=\overline{ \cP({\bf W}){\otimes}_{min}B(\cK)}^{w*}
$$
and each element in  $  F^\infty(g)\bar{\otimes}_{min}B(\cK)$ is the   sequential SOT-(resp. WOT-, w*-) closure of operator-valued  polynomials  of the form
$\sum_{|\alpha|\leq m}  W_\alpha\otimes C_{(\alpha)}$, where  $C_{(\alpha)}\in B(\cK)$.
\end{proposition}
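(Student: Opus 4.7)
The plan is to reduce Proposition \ref{multi} to the scalar-valued case, which is Theorem \ref{closure}, by way of the vector-valued Fourier-representation result in Theorem \ref{analytic}. The skeleton has four moves: (a) show every multi-analytic operator admits a Fej\'er-type SOT-approximation by operator-valued polynomials in $\cP({\bf W})\otimes_{\min}B(\cK)$; (b) use this approximation and WOT-closedness of the commutant to identify the multi-analytic operators with $F^\infty(g)\bar\otimes_{\min}B(\cK)$; (c) verify that the SOT-, WOT-, and $w^*$-closures of $\cP({\bf W})\otimes_{\min}B(\cK)$ coincide; (d) extract sequentiality from the explicit Ces\`aro formula.

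For step (a), let $A\in B(F^2(H_n)\otimes\cK)$ be multi-analytic. Since $A$ commutes with each $\Lambda_i\otimes I_\cK$, Theorem \ref{analytic} provides a Fourier representation $A=\sum_{\alpha\in\FF_n^+}W_\alpha\otimes C_{(\alpha)}$ with $C_{(\alpha)}\in B(\cK)$, and exhibits the explicit Ces\`aro means
\[
\sigma_N(A):=\sum_{|\alpha|\leq N}\Bigl(1-\tfrac{|\alpha|}{N+1}\Bigr)\bigl(W_\alpha\otimes C_{(\alpha)}\bigr)
\]
satisfying $\|\sigma_N(A)\|\leq\|A\|$ and $\sigma_N(A)\to A$ in SOT. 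Each $\sigma_N(A)$ is a finite sum lying in the algebraic tensor product $\cP({\bf W})\otimes B(\cK)\subset \cP({\bf W})\otimes_{\min}B(\cK)$, which gives a bounded SOT-approximation by polynomials.

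For step (b), operator-valued polynomials $\sum_{|\alpha|\leq m}W_\alpha\otimes C_{(\alpha)}$ are trivially multi-analytic, so $\cP({\bf W})\otimes_{\min}B(\cK)\subset\{\Lambda_i\otimes I_\cK\}'$; taking WOT-closures and using that the commutant is WOT-closed yields $F^\infty(g)\bar\otimes_{\min}B(\cK)\subset\{\Lambda_i\otimes I_\cK\}'$. Conversely, step (a) shows that every multi-analytic $A$ is an SOT-limit, hence a WOT-limit, of members of $\cP({\bf W})\otimes_{\min}B(\cK)\subset F^\infty(g)\otimes_{\min}B(\cK)$, so it lies in $F^\infty(g)\bar\otimes_{\min}B(\cK)$.

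For step (c), combining the inclusions just obtained with Theorem \ref{closure} applied slotwise, I would argue
\[
F^\infty(g)\bar\otimes_{\min}B(\cK)\subset\overline{\cP({\bf W})\otimes_{\min}B(\cK)}^{SOT}\subset\overline{\cP({\bf W})\otimes_{\min}B(\cK)}^{WOT}\subset F^\infty(g)\bar\otimes_{\min}B(\cK),
\]
so SOT- and WOT-closures agree. To get the $w^*$-closure into this loop, I would appeal to Krein--\v Smulian: the bounded ball of $F^\infty(g)\bar\otimes_{\min}B(\cK)$ is $w^*$-closed (as $F^\infty(g)$ is a dual algebra, cf.\ Proposition \ref{tilde-f2}), and the bounded SOT-approximants $\sigma_N(A)$ of step (a) also converge $w^*$, giving the remaining inclusion. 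For (d), sequentiality is free since $\{\sigma_N(A)\}_{N\in\NN}$ is a single bounded sequence of polynomials converging to $A$ in each of the three topologies simultaneously.

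The main obstacle is not the polynomial approximation itself, which Theorem \ref{analytic} hands us, but the bookkeeping in step (c): verifying that the $w^*$-topology on $F^\infty(g)\bar\otimes_{\min}B(\cK)$ behaves well enough that the bounded Ces\`aro sequence $\sigma_N(A)$ actually converges $w^*$ to $A$ and that the three closures of a non-self-adjoint convex set in fact coincide. Once Krein--\v Smulian is invoked and boundedness $\|\sigma_N(A)\|\leq\|A\|$ is in hand, the argument closes cleanly.
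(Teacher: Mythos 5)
Your proposal follows essentially the same route as the paper: both rest on Theorem \ref{analytic} for the bounded Ces\`aro approximants $\sigma_N(A)$, on the WOT-closedness of the commutant $M:=\{\Lambda_1\otimes I_\cK,\ldots,\Lambda_n\otimes I_\cK\}'$, and on the chain of inclusions among the SOT-, WOT-, and $w^*$-closures of $\cP({\bf W})\otimes_{min}B(\cK)$, with sequentiality coming from the single bounded sequence $\sigma_N(A)$. One inference in your step (b) is a non sequitur as written: from $\cP({\bf W})\otimes_{min}B(\cK)\subset M$ and WOT-closedness of $M$ you only get $\overline{\cP({\bf W})\otimes_{min}B(\cK)}^{WOT}\subset M$, not $F^\infty(g)\bar\otimes_{min}B(\cK)\subset M$, and the latter is exactly what the first inclusion of your step (c) chain needs before Theorem \ref{analytic} can even be applied to a general element of $F^\infty(g)\bar\otimes_{min}B(\cK)$. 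The repair is the one the paper uses: since $F^\infty(g)=\{\Lambda_1,\ldots,\Lambda_n\}'$ by Proposition \ref{tilde-f2}, every element of $F^\infty(g)\otimes_{min}B(\cK)$ already commutes with each $\Lambda_i\otimes I_\cK$, and then the WOT-closedness of $M$ gives $F^\infty(g)\bar\otimes_{min}B(\cK)\subset M$; with that line inserted your argument closes correctly.
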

\begin{proof}  Let $M$ be the set of all  multi-analytic operators with respect to $ \Lambda_1\otimes I_\cK,\ldots,  \Lambda_n\otimes I_\cK$ and note that   $M$ is WOT-(resp. SOT-, w*-) closed.   As in the proof of Theorem  \ref{closure}, one can use Theorem \ref{analytic} to prove that
$$
M=\overline{  \cP({\bf W}){\otimes}_{min}B(\cK)}^{SOT}=\overline{  \cP({\bf W}){\otimes}_{min}B(\cK)}^{WOT}=\overline{ \cP({\bf W}){\otimes}_{min}B(\cK)}^{w*}
$$
and that each element in  $M$ is the    sequential SOT-(resp. WOT-, w*-) limit  of operator-valued  polynomials  of the form
$\sum_{|\alpha|\leq m}  W_\alpha\otimes C_{(\alpha)}$, where  $C_{(\alpha)}\in B(\cK)$.
On the other hand, we have
$$
M=\overline{  \cP({\bf W}){\otimes}_{min}B(\cK)}^{WOT}\subset \overline{ F^\infty(g){\otimes}_{min}B(\cK)}^{WOT}=
F^\infty(g)\bar{\otimes}_{min} B(\cK).
$$
Since $F^\infty(g)=\{ \Lambda_1,\ldots, \Lambda_n\}^\prime$,  each element in  $ F^\infty(g)\otimes_{min}B(\cK)$ commutes with $ \Lambda_i\otimes I_\cK$ for $i\in \{1,\ldots, n\}$.
Hence $F^\infty(g)\bar{\otimes}_{min} B(\cK)\subset M$, which completes the proof.
\end{proof}
We remark that  a similar result holds for the Hardy algebra $R^\infty(g)$.
In what follows, we prove the existence of a $w^*$-continuous $F^\infty(g)$-functional calculus for the pure elements in the noncommutative domain $\cD_{g^{-1}}(\cH)$.
\begin{theorem}  \label{pure2} Let $g$ be an admissible free holomorphic function such that 
$
\sup_{\alpha\in \FF_n^+} \frac{b_\alpha}{b_{\alpha g_i}}<\infty$  for  any $i\in \{1,\ldots, n\}$,
and let ${\bf W}=(W_1,\ldots, W_n)$ be the universal model of the noncommutative domain $\cD_{g^{-1}}$.
If $T=(T_1,\ldots, T_n)\in \cD_{g^{-1}}(\cH)$ is a pure $n$-tuple of operators, then the map
$\Psi_T:F^\infty(g)\to B(\cH)$ defined by
$$
\Psi_T(\varphi({\bf W})):=K_{g,T}^*(\varphi({\bf W})\otimes I_\cH) K_{g,T}, \qquad \varphi({\bf W})\in F^\infty(g),
$$
has the following  properties:
\begin{enumerate}
\item[(i)]  $\Psi_T$ is a sequentially WOT-continuous (resp. SOT-continuous) map;

\item[(ii)] $\Psi_T$ is a unital completely contractive  homomorphism;
\item[(iii)] $\Psi_T$ is a $w^*$-continuous linear map such that
$$
\Psi_T\left(\sum_{|\alpha|\leq m} d_\alpha W_\alpha \right)=\sum_{|\alpha|\leq m} d_\alpha T_\alpha
$$
for any $m\in \NN$, $d_\alpha\in \CC$.
\end{enumerate}
If,  in addition,  ${\bf W}=(W_1,\ldots, W_n)$ is radially pure with respect to $\cD_{g^{-1}}$ and $\varphi({\bf W})\in F^\infty(g)$ has the Fourier representation $\sum_{\alpha\in \FF_n^+} c_\alpha W_\alpha$, then
   $\varphi_r(T):=\sum_{k=0}^\infty \sum_{|\alpha|=k} r^{|\alpha|} c_\alpha T_\alpha$ converges in the operator norm and 
  $$\Psi_T(\varphi({\bf W}))=\text{\rm SOT-}\lim_{r\to 1} \varphi_r(T).
  $$
\end{theorem}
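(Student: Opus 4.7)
The plan is to reduce everything to the coinvariant-subspace structure provided by the noncommutative Berezin kernel in Theorem \ref{model}, and then transfer properties from $\cP({\bf W})$ to $F^\infty(g)$ using the sequential SOT-density established in Theorem \ref{closure}. Since $T$ is pure, $K_{g,T}:\cH\to F^2(H_n)\otimes\cD$ is an isometry intertwining $T_i^*$ and $W_i^*\otimes I_\cD$, so $K_{g,T}\cH$ is coinvariant for every $W_i\otimes I_\cD$. Iterating the intertwining and using $K_{g,T}^*K_{g,T}=I_\cH$ gives $K_{g,T}^*(W_\alpha\otimes I_\cD)K_{g,T}=T_\alpha$ for all $\alpha$, which at once yields $\Psi_T(I)=I$ and the polynomial identity in (iii). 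Complete contractivity is immediate since $K_{g,T}$ is an isometry. For multiplicativity in (ii), I would invoke the standard identity $P_\cM AB P_\cM=P_\cM A P_\cM B P_\cM$ valid whenever $\cM$ is coinvariant for both $A$ and $B$; it remains to verify that $K_{g,T}\cH$ is coinvariant for $\varphi({\bf W})\otimes I_\cD$ for every $\varphi({\bf W})\in F^\infty(g)$. This follows because coinvariance is preserved under SOT-limits of bounded nets, and Theorem \ref{closure} provides a sequence in $\cP({\bf W})$ converging SOT to $\varphi({\bf W})$ with uniform norm control.

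For the continuity statements in (i) and the $w^*$-continuity in (iii), the key observation is that the three maps $A\mapsto A\otimes I_\cD$, $B\mapsto K_{g,T}^*B$ and $B\mapsto BK_{g,T}$ are each WOT-, SOT- and $w^*$-continuous on bounded sets (the $w^*$-continuity on all of $B(F^2(H_n))$ being seen by dualizing against the trace class), so their composition $\Psi_T$ is sequentially WOT- and SOT-continuous and $w^*$-continuous.

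For the radial statement, the radial purity of ${\bf W}$ ensures that $r{\bf W}\in\cD^{pure}_{g^{-1}}(F^2(H_n))$ for $r\in(\delta,1)$, so parts (i)--(iii) applied with $T$ replaced by $r{\bf W}$ give a completely contractive unital homomorphism $\Psi_{r{\bf W}}:F^\infty(g)\to B(F^2(H_n))$ which sends $W_\alpha$ to $r^{|\alpha|}W_\alpha$. Setting $\varphi_r({\bf W}):=\Psi_{r{\bf W}}(\varphi({\bf W}))$, the sequential SOT-continuity of $\Psi_{r{\bf W}}$, together with the SOT-closedness of $F^\infty(g)$ from Proposition \ref{tilde-f2}, places $\varphi_r({\bf W})$ in $F^\infty(g)$ with Fourier expansion $\sum_\alpha r^{|\alpha|}c_\alpha W_\alpha$ and norm bounded by $\|\varphi({\bf W})\|$. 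A direct computation on the basis $\{e_\beta\}$ gives
\[
\|(\varphi_r({\bf W})-\varphi({\bf W}))e_\beta\|^2=\sum_\alpha (1-r^{|\alpha|})^2|c_\alpha|^2\,\frac{b_\beta}{b_{\alpha\beta}},
\]
so dominated convergence (the summand being dominated by the summable sequence $|c_\alpha|^2 b_\beta/b_{\alpha\beta}$ coming from $\varphi({\bf W})\in F^\infty(g)$) and the uniform norm bound yield $\varphi_r({\bf W})\to\varphi({\bf W})$ in SOT as $r\to 1^-$. On the other hand, by complete contractivity of $\Psi_T$ and the norm estimate for homogeneous parts in Theorem \ref{analytic}, $\|\sum_{|\alpha|=k}c_\alpha T_\alpha\|\le\|\varphi({\bf W})\|$, so $\varphi_r(T)=\sum_k r^k\sum_{|\alpha|=k}c_\alpha T_\alpha$ converges absolutely in the operator norm for $r<1$.

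The last step is to identify $\Psi_T(\varphi_r({\bf W}))$ with $\varphi_r(T)$; after that, sequential SOT-continuity of $\Psi_T$ along any sequence $r_n\to 1^-$ delivers
\[
\Psi_T(\varphi({\bf W}))=\text{SOT-}\lim_{n\to\infty}\Psi_T(\varphi_{r_n}({\bf W}))=\text{SOT-}\lim_{n\to\infty}\varphi_{r_n}(T).
\]
For the identification, I would apply $\Psi_T$ to the Fej\'er partial sums $\sigma_N=\sum_{|\alpha|\le N}(1-|\alpha|/(N+1))r^{|\alpha|}c_\alpha W_\alpha$, which are polynomials converging SOT to $\varphi_r({\bf W})$ with norms bounded by $\|\varphi_r({\bf W})\|$ (Theorem \ref{analytic}); by the polynomial identity in (iii), $\Psi_T(\sigma_N)$ is the corresponding Fej\'er sum in $T$, which converges in norm to $\varphi_r(T)$ by the absolute-convergence estimate above. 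This two-stage limit argument is the main obstacle: one must carefully coordinate the Fej\'er approximation (to move $\Psi_T$ inside the series at fixed $r$) with the radial limit $r\to 1^-$, and the uniform control $\|\varphi_r({\bf W})\|\le\|\varphi({\bf W})\|$ is what makes both limits compatible.
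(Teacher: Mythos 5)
Your argument is correct and follows the same overall architecture as the paper's proof: compression through the isometric Berezin kernel $K_{g,T}$, transfer from $\cP({\bf W})$ to $F^\infty(g)$ via the Fej\'er approximants of Theorem \ref{analytic}, and a radial approximation for the last claim. Where you differ is in three sub-arguments, each of which is a legitimate (and in places more explicit) replacement for what the paper does. For multiplicativity you use the coinvariance identity $P_\cM ABP_\cM=P_\cM AP_\cM BP_\cM$ together with the observation that coinvariance of $K_{g,T}\cH$ passes from polynomials to all of $F^\infty(g)$ under bounded SOT-limits (since $P_{\cM^\perp}(q_k({\bf W})^*\otimes I)P_\cM=0$ survives the WOT-limit of adjoints); the paper simply asserts that the homomorphism property extends by WOT-density, so your route actually fills in the detail. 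For the crucial bound $\|\varphi_r({\bf W})\|\le\|\varphi({\bf W})\|$ you bootstrap: since $r{\bf W}$ is pure, parts (i)--(iii) applied to $T=r{\bf W}$ give $\varphi_r({\bf W})=\Psi_{r{\bf W}}(\varphi({\bf W}))=K_{g,r{\bf W}}^*(\varphi({\bf W})\otimes I)K_{g,r{\bf W}}$, whence the bound; the paper instead verifies the intertwining $K_{g,r{\bf W}}\varphi_r({\bf W})^*=(\varphi({\bf W})^*\otimes I)K_{g,r{\bf W}}$ by a direct matrix-coefficient computation, which amounts to the same identity but does not presuppose the functional calculus for $r{\bf W}$. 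Finally, you prove $\varphi_r({\bf W})\to\varphi({\bf W})$ in SOT by dominated convergence of $\sum_\alpha(1-r^{|\alpha|})^2|c_\alpha|^2 b_\beta/b_{\alpha\beta}$ on basis vectors (the domination being exactly the defining condition for membership in $F^\infty(g)$), where the paper invokes a generic approximation argument; your version is the more self-contained of the two. The two-stage Fej\'er/radial limit at the end is coordinated correctly, with the uniform bounds $\|\sigma_N\|\le\|\varphi_r({\bf W})\|\le\|\varphi({\bf W})\|$ doing the work in both treatments.
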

\begin{proof} 
Let $\{q_k({\bf W})\}_k\subset \cP({\bf W})$ be a sequence such that 
WOT-$\lim_{k\to\infty} q_k({\bf W})=\varphi({\bf W})$.
Since the WOT and $w^*$-topologies concide on bounded set, we have 
$w^*$-$\lim_{k\to\infty} q_k({\bf W})=\varphi({\bf W})$ which implies 
$w^*$-$\lim_{k\to\infty} (q_k({\bf W})\otimes I_\cH)=\varphi({\bf W})\otimes I_\cH$.
Hence,  WOT-$\lim_{k\to\infty} (q_k({\bf W})\otimes I_\cH)=\varphi({\bf W})\otimes I_\cH$ and 
WOT-$\lim_{k\to\infty}\Psi_T(q_k({\bf W}))=\Psi_T(\varphi({\bf W}))$.
Similarly, if  SOT-$\lim_{k\to\infty} q_k({\bf W})=\varphi({\bf W})$, then 
SOT-$\lim_{k\to\infty} (q_k({\bf W})\otimes I_\cH)=\varphi({\bf W})\otimes I_\cH$ which implies that
SOT-$\lim_{k\to\infty}\Psi_T(q_k({\bf W}))=\Psi_T(\varphi({\bf W}))$.
According  to   Theorem \ref{model}, the Berezin kernel  $K_{g,T}$ is an isometry and 
 $
 K_{g,T} T_i^*=(W_i^*\otimes I_\cD)K_{g,T},\  i\in \{1,\ldots, n\}.
 $
Consequently, 
$$
\Psi_T\left(\sum_{|\alpha|\leq m} d_\alpha W_\alpha \right)=\sum_{|\alpha|\leq m} d_\alpha T_\alpha
$$
for any $m\in \NN$, $d_\alpha\in \CC$.  Since $\Psi_T$ is a homomorphism on the algebra $\cP({\bf W})$ and  $F^\infty(g)=\overline{\cP({\bf W})}^{WOT}$, one can easily show that $\Psi_T$ is a homomorphism on 
$F^\infty(g)$.
On the other hand, since
$$
[\Psi_T(\varphi_{ij})]_{p\times p}
=\left(\oplus_1^p K_{g,T}^*\right)[\varphi_{ij}(W)\otimes I_\cH]_{p\times p}\left(\oplus_1^p K_{g,T}\right),\qquad p\in \NN,
$$
and $K_{g,T}$ is an isometry, it is clear that $\Psi_T$ is completely contractive. This proves part (ii).
To prove part (iii), note that if $\{\varphi_\iota({\bf W})\}_{\iota}$ is a net in $F^\infty(g)$ such that 
$w^*$-$\lim_\iota \varphi_\iota({\bf W})=\varphi({\bf W})\in F^\infty(g)$, then 
$w^*$-$\lim_\iota \varphi_\iota({\bf W})\otimes I_\cH=\varphi({\bf W})\otimes I_\cH$, which, due to the definition of $\Psi_T$,  implies
$w^*$-$\lim_\iota \Psi_T(\varphi_\iota({\bf W}))=\Psi_T(\varphi({\bf W}))$. Therefore $\Psi_T$ is $w^*$-continuous.

To prove the last part of the theorem, suppose that ${\bf W}=(W_1,\ldots, W_n)$ is radially pure with respect to $\cD_{g^{-1}}$ and $\varphi({\bf W})\in F^\infty(g)$ has the Fourier representation $\sum_{\alpha\in \FF_n^+} c_\alpha W_\alpha$. According to Theorem \ref{analytic}, we have 
$
\left\|\sum_{ |\alpha|=k} c_\alpha  W_\alpha\right\|\leq \|\varphi(W)\|.
$
If $r\in [0,1)$, then we have
$$
\left\|\sum_{k=0}^\infty\sum_{|\alpha|=k} c_\alpha r^{|\alpha|}W_\alpha\right\|
\leq \sum_{k=0}^\infty r^k\left\|\sum_{ |\alpha|=k} c_\alpha  W_\alpha\right\|
\leq \frac{1}{1-r}\|\varphi({\bf W})\|.
$$
This  shows that  $\varphi_r({\bf W}):=\sum_{k=0}^\infty \sum_{|\alpha|=k} r^{|\alpha|} c_\alpha W_\alpha$ converges in norm.  
Since ${\bf W}=(W_1,\ldots, W_n)$ is radially pure with respect to $\cD_{g^{-1}}$, there is $\delta\in(0,1)$ such that $r{\bf W}\in \cD_{g^{-1}}(F^2(H_n))$  is pure for all $r\in (\delta,1)$.
This implies that  $rT\in \cD_{g^{-1}}(\cH)$ is pure as well.
Since  $T$ is pure, Theorem \ref{pure} implies
$
\left\|\sum_{ |\alpha|=k} c_\alpha  T_\alpha\right\| 
\leq \left\|\sum_{ |\alpha|=k} c_\alpha  W_\alpha\right\|.
$
Combining this relation   with the inequalities above, we deduce that
 $
 \varphi_r(T):=\sum_{k=0}^\infty \sum_{|\alpha|=k} r^{|\alpha|} c_\alpha T_\alpha
 $
 converges in the operator norm. Moreover, due to Theorem \ref{pure}, we have
 \begin{equation}
 \label{fir}
 \varphi_r(T)=K_{g,T}^*(\varphi_r({\bf W})\otimes I_\cH)K_{g,T},\qquad r\in(\delta,1),
 \end{equation}
which implies $\|\varphi_r(T)\|\leq \|\varphi_r({\bf W})\|$.
The next step is to prove that $\|\varphi_r({\bf W})\|\leq \|\varphi({\bf W})\|$  for $r\in (\delta,1)$ and 
$\text{\rm SOT-}\lim_{r\to 1} \varphi_r({\bf W})=\varphi({\bf W})$.
To this end,  note that 
$$
K_{g,r{\bf W}}p_r({\bf W})^*=[p({\bf W})^*\otimes
I_{F^2(H_n)}]K_{g,r{\bf W}}
$$
for any $r\in(\delta,1)$ and any polynomial $p({\bf W})\in \cP({\bf W})$.
  Let  $\gamma, \sigma, \epsilon\in \FF_n^+$ and set
$p({\bf W}):=\sum\limits_{\beta\in \FF_n^+, |\beta|\leq
|\gamma|} c_\beta W_\beta$. Since $W_\beta^* e_\gamma =0$ for any
$\beta\in \FF_n^+$ with $|\beta|>|\gamma|$,  we have
$
\varphi_r({\bf W})^* e_\alpha =p_r({\bf W})^* e_\alpha$
for any $\alpha\in \FF_n^+$ with $|\alpha|\leq |\gamma|$ and any
$r\in [0,1]$.
Consequently, straightforward  calculations
reveal that
\begin{equation*}
\begin{split}
\left<K_{f,r{\bf W}}\varphi_r({\bf W}))^*e_\gamma,
e_\sigma\otimes e_\epsilon\right> 
&=\left<K_{f,r{\bf W}}p_r({\bf W})^*e_\gamma,
e_\sigma\otimes e_\epsilon\right>\\
&=\left<[(p({\bf W})^*\otimes
I_{F^2(H_n)})]K_{g,r{\bf W}}e_\gamma,
e_\sigma\otimes e_\epsilon\right>\\
&=\sum_{\beta\in \FF_n^+} r^{|\beta|}
\sqrt{b_\beta}\left<p({\bf W})^* e_\beta,e_\sigma\right>
\left< W_\beta^*e_\gamma,  \Delta_{g^{-1},r{\bf W}} e_\epsilon\right>\\
&=\sum_{\beta\in \FF_n^+} r^{|\beta|}
\sqrt{b_\beta}\left<\varphi({\bf W})^* e_\beta,e_\sigma\right>
\left< W_\beta^*e_\gamma,  \Delta_{g^{-1},r{\bf W}} e_\epsilon\right>\\
&= \left<[\varphi({\bf W})^*\otimes I_{F^2(H_n)}] K_{g,r{\bf W}}
e_\gamma, e_\sigma\otimes e_\epsilon\right>
\end{split}
\end{equation*}
for any $r\in (\delta,1)$ and $\gamma, \sigma,\epsilon\in \FF_n^+$.
Consequently, since $\varphi_r({\bf W})$ and  $\varphi({\bf W})$  are
bounded operators, we deduce that
$$
K_{g,r{\bf W}}\varphi_r({\bf W})^*=[\varphi({\bf W})^*\otimes
I_{F^2(H_n)}]K_{g,r{\bf W}}.
$$
Since $K_{g,r{\bf W}}$ is an isometry, we deduce that  $\|\varphi_r({\bf W})\|\leq \|\varphi({\bf W})\|$  for $r\in (\delta,1)$.
Now,  using that fact that  $\lim_{r\to 1} \varphi_r({\bf W})e_\alpha=\varphi({\bf W})e_\alpha$ for all $\alpha\in \FF_n^+$, an approximation argument shows that 
$\text{\rm SOT-}\lim_{r\to 1} \varphi_r({\bf W})=\varphi({\bf W})$.
Since $Y\mapsto Y\otimes I$ is SOT-continuous  on bounded sets, we can pass to the limit in relation \eqref{fir} and deduce that
$
\text{\rm SOT-}\lim_{r\to 1} \varphi_r(T)=K_{g,T}^*(\varphi({\bf W})\otimes I_\cH)K_{g,T}.
$
This completes the proof.
\end{proof}

Our next goal is to extend the $w^*$-continuous $F^\infty(g)$-functional calculus calculus to completely
 non-coisometric $n$-tuples in 
$\cD_{g^{-1}}(\cH)$.  We define the domain ${\cD}^{cnc}_{g^{-1}}(\cH)$   of all $n$-tuples $T\in {\cD}_{g^{-1}}(\cH)$ which are completely non-coisometric and note that ${\cD}^{pure}_{g^{-1}}(\cH)\subset {\cD}^{cnc}_{g^{-1}}(\cH)$.

\begin{definition}   An $n$-tuple $T=(T_1,\ldots, T_n)\in \cD_{g^{-1}}(\cH)$ is said to be  {\it completely non-coisometric} with respect to $ \cD_{g^{-1}}(\cH)$ if 
there is no nonzero joint invariant  subspace  under $T_1^*,\ldots, T_n^*$ such that 
$
\Delta_{g^{-1}}(T,T^*)|_{\cH_0}=0.
$
\end{definition}
We remark that, in the single variable case of one contraction, i.e. $\|T\|\leq 1$, we recover the well-known definition of a completely non-coisometric  contraction.

\begin{proposition} \label{cnc} 
If $T=(T_1,\ldots, T_n)\in \cD_{g^{-1}}(\cH)$, then the following statements are equivalent.
\begin{enumerate}
  \item[(i)] There is no $h\in \cH$, $h\neq 0$    such that 
$
\Delta_{g^{-1}}(T,T^*)T_\alpha^*h=0$ for any $\alpha\in \FF_n^+.
$
\item[(ii)] $T$ is completely non-coisometric with respect  to $\cD_{g^{-1}}(\cH)$.
\item[(iii)] The noncommutative Berezin kernel  $K_{g,T}$ is a one-to-one operator.
\end{enumerate}
\end{proposition}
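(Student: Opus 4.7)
The plan is to observe that the three statements are linked by a single object, namely the subspace
$$
\cN := \{h\in\cH : \Delta_{g^{-1}}(T,T^*)T_\alpha^* h = 0 \text{ for all } \alpha\in\FF_n^+\},
$$
and to show that $\cN=\{0\}$ is equivalent to each of (i), (ii), (iii) in turn.

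First I would dispense with (i)$\iff$(iii). Using the defining formula for the Berezin kernel together with the orthonormality of $\{e_\alpha\}$, we have
$$
\|K_{g,T}h\|^2 = \sum_{\alpha\in\FF_n^+} b_\alpha \bigl\|\Delta_{g^{-1}}(T,T^*)^{1/2}T_\alpha^* h\bigr\|^2,
$$
so $K_{g,T}h=0$ if and only if $\Delta_{g^{-1}}(T,T^*)^{1/2}T_\alpha^* h=0$ for every $\alpha$, equivalently $\Delta_{g^{-1}}(T,T^*)T_\alpha^* h=0$ for every $\alpha$. Hence $\ker K_{g,T}=\cN$, which gives (i)$\iff$(iii) immediately.

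Next I would prove (i)$\iff$(ii) by verifying that $\cN$ is precisely the largest closed joint $T_i^*$-invariant subspace on which $\Delta_{g^{-1}}(T,T^*)$ vanishes. The key observations are:
\textbf{(a)} $\cN$ is closed, because for each fixed $\alpha$ the operator $\Delta_{g^{-1}}(T,T^*)T_\alpha^*$ is bounded and $\cN$ is an intersection of kernels.
\textbf{(b)} $\cN$ is invariant under each $T_i^*$: if $h\in\cN$ then $\Delta_{g^{-1}}(T,T^*)T_\alpha^* T_i^* h = \Delta_{g^{-1}}(T,T^*)T_{\alpha g_i}^* h = 0$ for every $\alpha$.
\textbf{(c)} Taking $\alpha=g_0$ in the defining condition shows $\Delta_{g^{-1}}(T,T^*)|_\cN=0$ (since $\Delta_{g^{-1}}(T,T^*)h=0$ and $\Delta_{g^{-1}}(T,T^*)\geq 0$).

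With these facts in hand, assume (ii). Then since $\cN$ itself is a closed joint $T_i^*$-invariant subspace on which $\Delta_{g^{-1}}(T,T^*)$ vanishes, the completely non-coisometric hypothesis forces $\cN=\{0\}$, which is precisely (i). Conversely, assume (i) and let $\cH_0$ be any nonzero closed joint $T_i^*$-invariant subspace with $\Delta_{g^{-1}}(T,T^*)|_{\cH_0}=0$. For any $h\in\cH_0$ and $\alpha\in\FF_n^+$ we have $T_\alpha^* h\in\cH_0$, so $\Delta_{g^{-1}}(T,T^*)T_\alpha^* h=0$. Hence $\cH_0\subset\cN=\{0\}$, contradicting $\cH_0\neq\{0\}$. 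Thus (ii) holds, completing the cycle.

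There is no real obstacle here; the proof is essentially a bookkeeping exercise once one identifies the subspace $\cN$ as the common hinge. The only point requiring mild care is verifying that $\cN$ is indeed $T_i^*$-invariant, which rests on the semigroup identity $T_\alpha^* T_i^* = T_{\alpha g_i}^*$.
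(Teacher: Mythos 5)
Your proof is correct and follows essentially the same route as the paper: identify the common subspace $\cN=\bigcap_{\alpha}\ker\Delta_{g^{-1}}(T,T^*)T_\alpha^*$, show it equals $\ker K_{g,T}$ via the norm identity and the fact that $\ker A=\ker A^{1/2}$ for $A\geq 0$, and show it is the largest closed jointly $T_i^*$-invariant subspace annihilated by $\Delta_{g^{-1}}(T,T^*)$. The only blemish is the index in step (b): with the convention $T_{\alpha}=T_{i_1}\cdots T_{i_k}$ one has $T_\alpha^*T_i^*=T_{g_i\alpha}^*$ rather than $T_{\alpha g_i}^*$, but since the condition defining $\cN$ quantifies over all words this does not affect the argument.
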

\begin{proof} 
Note that item (ii) is equivalent to the condition
$
 \bigcap_{\alpha\in \FF_n^+} \ker \Delta_{g^{-1}}(T,T^*)T_\alpha^*=\{0\},
$
which shows that  (i) is equivalent to (ii).  Recall that  the noncommutative Berezin kernel associated with 
 $T\in \cD_{g^{-1}}(\cH)$ is  the operator $K_{g,T}:\cH\to
F^2(H_n)\otimes \overline{\Delta_{g^{-1}} (T,T^*) (\cH)}$  given by
\begin{equation*}
 K_{g,T}h=\sum_{\alpha\in \FF_n^+} \sqrt{b_\alpha}
e_\alpha\otimes \Delta_{g^{-1}}(T,T^*)^{1/2} T_\alpha^* h,\quad h\in \cH.
\end{equation*}
Consequently, if $h\in \cH$, then
$$\left< K_{g,T}^*K_{g,T}h,h\right >=    \sum_{\alpha\in \FF_n^+}b_\alpha \left<T_\alpha\Delta_{g^{-1}}(T,T^*)T_\alpha^*h, h\right>=0
$$ 
if and only if $\Delta_{g^{-1}}(T,T^*)^{1/2}T_\alpha^*h=0$ for any $\alpha\in \FF_n^+$, if and only if
$h\in \bigcap_{\alpha\in \FF_n^+} \ker \Delta_{g^{-1}}(T,T^*)^{1/2}T_\alpha^*$.
Since 
$$ \bigcap_{\alpha\in \FF_n^+} \ker \Delta_{g^{-1}}(T,T^*)T_\alpha^*=\bigcap_{\alpha\in \FF_n^+} \ker \Delta_{g^{-1}}(T,T^*)^{1/2}T_\alpha^*,
 $$
 it is clear that (iii) is equivalent to (i). The proof is complete.
\end{proof}

\begin{corollary}  Any  $n$-tuple $T\in \cD_{g^{-1}}(\cH)$ which is either pure or has the defect operator
 $\Delta_{g^{-1}}(T,T^*)$ injective is completely non-coisometric with respect to  $\cD_{g^{-1}}(\cH)$.
\end{corollary}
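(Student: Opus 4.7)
The plan is to deduce both implications directly from the equivalent characterizations in Proposition \ref{cnc}. I would use characterization (i), namely that $T$ is completely non-coisometric if and only if the only $h\in\cH$ satisfying $\Delta_{g^{-1}}(T,T^*)T_\alpha^*h=0$ for every $\alpha\in \FF_n^+$ is $h=0$.

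For the pure case, I would argue two ways, either of which is short. The conceptual way: if $T$ is pure, then by Theorem \ref{model} the noncommutative Berezin kernel $K_{g,T}$ is an isometry, hence injective, so characterization (iii) of Proposition \ref{cnc} immediately gives that $T$ is completely non-coisometric. Alternatively, starting from the purity identity
\[
\sum_{\alpha\in \FF_n^+} b_\alpha T_\alpha \Delta_{g^{-1}}(T,T^*) T_\alpha^* = I_\cH,
\]
if $\Delta_{g^{-1}}(T,T^*)T_\alpha^*h=0$ for all $\alpha$, then $\Delta_{g^{-1}}(T,T^*)^{1/2}T_\alpha^*h=0$ for all $\alpha$ and therefore
\[
\|h\|^2 = \sum_{\alpha\in \FF_n^+} b_\alpha \bigl\|\Delta_{g^{-1}}(T,T^*)^{1/2}T_\alpha^*h\bigr\|^2 = 0,
\]
so $h=0$, verifying condition (i) of Proposition \ref{cnc}.

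For the injective defect case, suppose $\Delta_{g^{-1}}(T,T^*)$ is injective and $h\in\cH$ satisfies $\Delta_{g^{-1}}(T,T^*)T_\alpha^*h=0$ for every $\alpha\in \FF_n^+$. Taking $\alpha=g_0$ gives $\Delta_{g^{-1}}(T,T^*)h=0$, which by injectivity forces $h=0$. Thus condition (i) of Proposition \ref{cnc} is satisfied and $T$ is completely non-coisometric.

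There is essentially no obstacle: both parts are one-line arguments once Proposition \ref{cnc} is in hand. The only tiny subtlety is making sure to use $\Delta_{g^{-1}}(T,T^*)^{1/2}$ rather than $\Delta_{g^{-1}}(T,T^*)$ when passing to the sum of squared norms, but this is harmless since $\ker \Delta_{g^{-1}}(T,T^*)=\ker \Delta_{g^{-1}}(T,T^*)^{1/2}$, as already noted in the proof of Proposition \ref{cnc}.
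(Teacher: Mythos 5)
Your proof is correct and is exactly the intended argument: the paper states this corollary without proof precisely because it follows immediately from Proposition \ref{cnc} in the way you describe (purity forces the Berezin kernel to be an isometry, hence injective, giving condition (iii); injectivity of the defect operator kills any $h$ with $\Delta_{g^{-1}}(T,T^*)h=0$, giving condition (i)). Both branches and the remark about $\ker \Delta_{g^{-1}}(T,T^*)=\ker \Delta_{g^{-1}}(T,T^*)^{1/2}$ are sound.
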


Recall that $A\in B(\cH)$ is a trace class operator if $\|A\|_1:=\text{\rm trace} (A^*A)^{1/2}<\infty$. We denote by $\cT(\cH)$ the ideal of trace class operators in $B(\cH)$. It is well-known that $\cT(\cH)$ is a Banach space with respect to the trace norm $\|\cdot \|_1$ and the dual space  $\cT(\cH)^*$ is isometrically isomorphic to $B(\cH)$.

\begin{lemma} \label{dual} Let  $g= 1+\sum_{ |\alpha|\geq 1} b_\alpha
Z_\alpha$ be  a free holomorphic function  with $b_\alpha>0$, if $|\alpha|\geq 1$, and 
  such that
$$
\sup_{\alpha\in \FF_n^+} \frac{b_\alpha}{b_{g_i \alpha}}<\infty\quad \text{ and }\quad  \sup_{\alpha\in \FF_n^+} \frac{b_\alpha}{b_{ \alpha g_i}}<\infty,
\qquad i\in \{1,\ldots, n\},
$$
and let $F^\infty(g)$ be the associated noncommutative Hardy algebra. If $\Phi:F^\infty(g)\to B(\cH)$ is a linear map, then the following are equivalent:
\begin{enumerate}
\item[(i)]  $\Phi: (F^\infty(g), w^*)\to (B(\cH), w^*)$ is continuous;
\item[(ii)]  $\Phi: (F^\infty(g), w^*)\to (B(\cH), WOT)$  is sequential continuous.
\end{enumerate}

\end{lemma}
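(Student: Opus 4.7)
The implication (i)$\Rightarrow$(ii) is immediate: the $w^*$-topology on $B(\cH)$ is finer than the WOT (every WOT-continuous functional $T\mapsto\langle T\xi,\eta\rangle$ is the trace pairing with $\eta\otimes\xi^*\in\cT(\cH)$), so a $w^*$-$w^*$ continuous map is in particular $w^*$-WOT continuous, hence sequentially so. The substance is (ii)$\Rightarrow$(i). My plan proceeds in four steps, leveraging the separability of $F^2(H_n)$ and the fact that on bounded subsets of $B(\cH)$ the WOT and $w^*$-topology coincide.

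First, I would show that $\Phi$ is norm continuous via the closed graph theorem. If $a_k\to a$ in norm and $\Phi(a_k)\to b$ in norm, then $a_k\to a$ in $w^*$, so by (ii) $\Phi(a_k)\to\Phi(a)$ in WOT; since norm convergence implies WOT convergence, $b=\Phi(a)$, and boundedness follows. Next, since $H_n$ is finite dimensional, $F^2(H_n)$ and hence $\cT(F^2(H_n))$ are separable, and by Theorem~\ref{closure} the algebra $F^\infty(g)$ is $w^*$-closed in $B(F^2(H_n))$. Thus its predual is a separable quotient of $\cT(F^2(H_n))$, which makes the closed unit ball $B_1$ of $F^\infty(g)$ $w^*$-metrizable; on $B_1$, sequential $w^*$-continuity coincides with $w^*$-continuity.

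Combining these two facts, (ii) gives that $\Phi$ is $w^*$-WOT continuous on every ball $rB_1$. Since $\Phi$ is bounded, it maps $rB_1$ into a bounded subset of $B(\cH)$; on such bounded subsets the WOT and the $w^*$-topology agree (a finite rank approximation in trace norm, uniform on norm-balls, shows that every ultraweak functional is a uniform limit of WOT functionals on any such ball). Consequently $\Phi$ is $w^*$-$w^*$ continuous on every bounded subset of $F^\infty(g)$.

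Finally, for each $S\in\cT(\cH)$ consider the bounded linear functional $\psi_S(a):=\operatorname{tr}(\Phi(a)S)$, satisfying $|\psi_S(a)|\leq\|\Phi\|\|S\|_1\|a\|$. By the previous step $\psi_S$ is $w^*$-continuous on each ball $rB_1$, so its kernel is convex and $w^*$-closed on every ball; the Krein--Smulian theorem then gives that $\ker\psi_S$ is $w^*$-closed in $F^\infty(g)$, hence $\psi_S$ is $w^*$-continuous on all of $F^\infty(g)$. Since $S$ was arbitrary, $\Phi$ is $w^*$-$w^*$ continuous. The principal obstacle is this final upgrade from ball-continuity to global continuity, which is exactly the content of Krein--Smulian (equivalently, the Banach--Dieudonn\'e theorem); the separability of the predual, which makes the ball metrizable and converts the sequential hypothesis in (ii) into genuine ball-continuity, is the other essential ingredient.
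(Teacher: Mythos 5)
Your argument is correct and follows the same essential route as the paper: both hinge on the observation that $F^\infty(g)$, being $w^*$-closed in $B(F^2(H_n))$, is the dual of the separable Banach space $\cT(F^2(H_n))/{}^{\perp}F^\infty(g)$. The only difference is that the paper then simply cites Proposition 20.3 of Conway's \emph{A Course in Operator Theory}, whereas you prove that proposition's content in full (closed graph theorem for boundedness, $w^*$-metrizability of the unit ball, agreement of WOT and the $w^*$-topology on bounded sets, and Krein--Smulian to pass from ball-continuity to global $w^*$-continuity), all of which is sound.
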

\begin{proof} First, note that   Theorem \ref{closure}  ensures that   $F^\infty(g)$ is   a $w^*$-closed  subspace in $B(F^2(H_n))$. 
Since  $\cT(F^2(H_n))^*=B(F^2(H_n))$,  the duality theory for Banach spaces shows that
$$
F^\infty(g)=\left(\cT(F^2(H_n))/_{{}^\perp F^\infty(g)}\right)^*,
$$
where 
$${}^\perp F^\infty(g):=\left\{ T\in \cT(F^2(H_n)):\ \text{\rm trace}(TF)=0, \text{ for any } F\in F^\infty(g)\right\}
$$
is the pre-annihilator    of  $F^\infty(g)$ in  $\cT(F^2(H_n))$.
Since $\cT(F^2(H_n))$ is separable, so is  the closed subspace ${}^\perp F^\infty(g)$.     Now, it is clear that
the quotient  $\cT(F^2(H_n))/_{{}^\perp F^\infty(g)}$ is a separable Banach space and, consequently,  the noncommutative Hardy algebra $F^\infty(g)$ is the dual of a separable Banach space.
Applying  Proposition  20.3 from \cite{Con} in our setting we complete the proof.
\end{proof}

In what follows, we provide a $w^*$-continuous $F^\infty(g)$-functional 
calculus for the completely non-coisometric $n$-tuples in $\cD_{g^{-1}}(\cH)$.

\begin{theorem} \label{cnc2}  Let $g= 1+\sum_{ |\alpha|\geq 1} b_\alpha
Z_\alpha$ be an admissible free holomorphic function with the property that
$
\sup_{\alpha\in \FF_n^+} \frac{b_\alpha}{b_{ \alpha g_i}}<\infty$ for any $ i\in \{1,\ldots, n\},
$
and let ${\bf W}=(W_1,\ldots, W_n)$ be the universal model of the noncommutative domain $\cD_{g^{-1}}$.
Suppose that  $T=(T_1,\ldots, T_n)\in \cD^{cnc}_{g^{-1}}(\cH)$.   
 \begin{enumerate}
\item[(i)] If  $T\in \overline{\cD^{pure}_{g^{-1}}(\cH)}$, then the completely contractive  linear map 
$$
\Phi:\cA(g)\to B(\cH),\qquad \Phi(p({\bf W})):=p(T),
$$ 
where $p$ is any polynomial in $n$  noncommutative indeterminates,
 has a unique extension to a $w^*$-continuous  homomorphism
$\Psi_T:F^\infty(g)\to B(\cH)$ 
 which is  completely  contractive and  sequentially WOT-continuous (resp. SOT-continuous).
 Moreover,  if $\varphi({\bf W})\in F^\infty(g)$ has the Fourier representation $\sum_{\alpha\in \FF_n^+} c_\alpha W_\alpha$, then
 $$
 \Psi_T(\varphi({\bf W}))=\text{\rm SOT-} \lim_{N\to\infty}\sum_{s\in \NN, s\leq N} \left(1-\frac{s}{N+1}\right)\sum_{|\alpha |=s} c_\alpha T_\alpha.
 $$
\item[(ii)]
In particular, if  $T$ and ${\bf W}$ are radially pure with respect to $\cD_{g^{-1}}$ and $\varphi({\bf W})\in F^\infty(g)$ has the Fourier representation $\sum_{\alpha\in \FF_n^+} c_\alpha W_\alpha$, then
   $\varphi_r(T):=\sum_{k=0}^\infty \sum_{|\alpha|=k} r^{|\alpha | } c_\alpha T_\alpha$ converges in the operator norm and 
 \begin{equation*}
 \begin{split}
 \Psi_T(\varphi({\bf W}))&=\text{\rm SOT-}\lim_{r\to 1} \varphi_r(T)
 =\text{\rm SOT-}\lim_{r\to 1}K_{rT}^*[\varphi({\bf W})\otimes I_\cH] K_{rT}.
 \end{split}
 \end{equation*}
 \end{enumerate}
\end{theorem}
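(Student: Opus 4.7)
The plan is to extend the polynomial map $p({\bf W})\mapsto p(T)$ from $\cA(g)$ to all of $F^\infty(g)$ by passing to SOT-limits of Cesàro partial sums of the Fourier expansion, using the noncommutative Berezin kernel $K_{g,T}$ as the bridge. Fix $\varphi({\bf W})\in F^\infty(g)$ with Fourier expansion $\sum c_\alpha W_\alpha$ and set $\sigma_N({\bf W}):=\sum_{s\le N}(1-\tfrac{s}{N+1})\sum_{|\alpha|=s}c_\alpha W_\alpha$. By Theorem~\ref{analytic}, $\sigma_N({\bf W})\to\varphi({\bf W})$ in SOT with $\|\sigma_N({\bf W})\|\le\|\varphi({\bf W})\|$; since $T\in\overline{\cD^{pure}_{g^{-1}}(\cH)}$, the matricial von Neumann inequality of Corollary~\ref{vN-disc} transfers this bound to $\sigma_N(T)$ at every matrix level. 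Iterating Corollary~\ref{Berez} yields $p(T)K_{g,T}^*=K_{g,T}^*(p({\bf W})\otimes I_\cH)$ for every polynomial $p$; specializing to $p=\sigma_N$ gives, for each $\xi\in F^2(H_n)\otimes\cD$,
$$\sigma_N(T)K_{g,T}^*\xi \;=\; K_{g,T}^*\bigl(\sigma_N({\bf W})\otimes I_\cH\bigr)\xi \;\longrightarrow\; K_{g,T}^*\bigl(\varphi({\bf W})\otimes I_\cH\bigr)\xi \quad (N\to\infty)$$
in norm. The cnc hypothesis now enters crucially: Proposition~\ref{cnc} gives $\ker K_{g,T}=\{0\}$, so $\operatorname{range}(K_{g,T}^*)$ is dense in $\cH$. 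A standard approximation argument using the uniform bound $\|\sigma_N(T)\|\le\|\varphi({\bf W})\|$ spreads the SOT-convergence from this dense subspace to all of $\cH$; set $\Psi_T(\varphi({\bf W})):=\text{\rm SOT-}\lim_N\sigma_N(T)$, which satisfies
$$\Psi_T(\varphi({\bf W}))\,K_{g,T}^* \;=\; K_{g,T}^*\bigl(\varphi({\bf W})\otimes I_\cH\bigr).$$

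Linearity is immediate, and multiplicativity is forced by the intertwining: both $\Psi_T(\varphi\psi)$ and $\Psi_T(\varphi)\Psi_T(\psi)$ intertwine $K_{g,T}^*$ with $\varphi\psi\otimes I_\cH$, so they agree on the dense range of $K_{g,T}^*$ and hence on all of $\cH$. Applying the same density argument entrywise to matrix Cesàro sums, and invoking Corollary~\ref{vN-disc} at the matrix level, yields complete contractivity. For sequential WOT-continuity, a WOT-convergent sequence $\varphi_k\to\varphi$ in $F^\infty(g)$ is norm-bounded by Banach--Steinhaus, and the adjoint intertwining $K_{g,T}\Psi_T(\varphi_k)^*h=(\varphi_k^*\otimes I_\cH)K_{g,T}h$ combined with injectivity of $K_{g,T}$ promotes WOT-convergence on the right to weak convergence of $\Psi_T(\varphi_k)^*h$. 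Sequential SOT-continuity uses instead $\Psi_T(\varphi_k)K_{g,T}^*\xi=K_{g,T}^*(\varphi_k\otimes I_\cH)\xi$, on which SOT-convergence of $\varphi_k$ transfers directly. Lemma~\ref{dual} then lifts sequential WOT-continuity to $w^*$-continuity, and uniqueness of the extension follows from the $w^*$-density of polynomials (Theorem~\ref{closure}).

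For part (ii), radial purity of $T$ furnishes $\delta\in(0,1)$ with $rT\in\cD^{pure}_{g^{-1}}(\cH)$ for $r\in(\delta,1)$. The estimate $\|\sum_{|\alpha|=k}c_\alpha T_\alpha\|\le\|\varphi({\bf W})\|$ (from Theorem~\ref{analytic} together with Corollary~\ref{vN-disc}) gives norm convergence of $\varphi_r(T)=\sum_k r^k\sum_{|\alpha|=k}c_\alpha T_\alpha$, and a dominated-convergence comparison between Cesàro and Abel partial sums, combined with Theorem~\ref{pure2} applied to the pure tuple $rT$, identifies $\varphi_r(T)$ with $\Psi_{rT}(\varphi({\bf W}))=K_{g,rT}^*(\varphi({\bf W})\otimes I_\cH)K_{g,rT}$; this also supplies the uniform bound $\|\varphi_r(T)\|\le\|\varphi({\bf W})\|$ via the fact that $K_{g,rT}$ is an isometry. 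To pass to $r\to 1^-$, admissibility condition (c) provides the uniform partial-sum control needed to conclude $\Delta_{g^{-1}}(rT,(rT)^*)\to\Delta_{g^{-1}}(T,T^*)$ strongly by an Abel-type argument, whence $K_{g,rT}h\to K_{g,T}h$ strongly for each $h\in\cH$. Combining this pointwise convergence with the intertwining from part (i) and the uniform norm bound yields $\text{\rm SOT-}\lim_{r\to 1}\varphi_r(T)=\Psi_T(\varphi({\bf W}))$.

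The main obstacle is obtaining strong (rather than merely weak) operator convergence when $T$ is cnc but not pure: the Berezin kernel $K_{g,T}$ is then injective but generally not bounded below, so one cannot invert it to transfer SOT-convergence from $F^2(H_n)\otimes\cD$ to $\cH$. The resolution is to use the intertwining in the form $\Psi_T(\varphi)K_{g,T}^*=K_{g,T}^*(\varphi\otimes I_\cH)$, placing $K_{g,T}$ on the source side; density of $\operatorname{range}(K_{g,T}^*)$ together with uniform boundedness of the approximating sequence then suffices to propagate SOT-convergence across all of $\cH$ without any lower bound on $K_{g,T}$. The same observation underpins the radial limit in part (ii).
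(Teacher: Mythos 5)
Your part (i) is correct and follows essentially the same route as the paper: Ces\`aro sums from Theorem \ref{analytic}, the von Neumann inequality of Corollary \ref{vN-disc} (valid because $T\in \overline{\cD^{pure}_{g^{-1}}(\cH)}$) for the uniform bound, the intertwining $p(T)K_{g,T}^*=K_{g,T}^*(p({\bf W})\otimes I_\cH)$, density of $\operatorname{range}K_{g,T}^*$ from the cnc hypothesis via Proposition \ref{cnc}, and Lemma \ref{dual} to upgrade sequential continuity to $w^*$-continuity. Your multiplicativity argument (both sides intertwine $K_{g,T}^*$ with $\varphi\psi\otimes I_\cH$ and agree on a dense set) is a slightly cleaner variant of the paper's density argument.

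Part (ii), however, has a genuine gap in the passage to the radial limit. You propose to show $K_{g,rT}h\to K_{g,T}h$ strongly and then let $r\to 1$ in $\varphi_r(T)=K_{g,rT}^*(\varphi({\bf W})\otimes I_\cH)K_{g,rT}$. This fails for two reasons. First, even if the strong convergence of the kernels were established (itself nontrivial: the series defining $\Delta_{g^{-1}}(T,T^*)$ converges only in WOT, the $a_\alpha$ have mixed signs, and the sum defining $K_{g,T}$ is infinite, so an ``Abel-type argument'' needs real work), taking adjoints destroys it: SOT-convergence of a bounded family does not pass to $K_{g,rT}^*$, so at best you get WOT-convergence of $\varphi_r(T)$. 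Second, and more seriously, the limit you would obtain is $K_{g,T}^*(\varphi({\bf W})\otimes I_\cH)K_{g,T}$, which equals $\Psi_T(\varphi({\bf W}))$ only when $K_{g,T}$ is an isometry, i.e. when $T$ itself is pure; a radially pure cnc tuple need not be pure, so $K_{g,T}^*K_{g,T}\neq I$ in general and this identification is false. The correct mechanism --- which your closing remark gestures at but your part (ii) does not actually use --- is to keep the \emph{fixed} kernel on the source side: from $T_iK_{g,T}^*=K_{g,T}^*(W_i\otimes I_\cH)$ and the norm convergence of the series one gets $\varphi_r(T)K_{g,T}^*=K_{g,T}^*(\varphi_r({\bf W})\otimes I_\cH)$; one then proves $\text{\rm SOT-}\lim_{r\to 1}\varphi_r({\bf W})=\varphi({\bf W})$ with $\|\varphi_r({\bf W})\|\leq\|\varphi({\bf W})\|$ (this is where the radial purity of ${\bf W}$ enters, via $K_{g,r{\bf W}}\varphi_r({\bf W})^*=[\varphi({\bf W})^*\otimes I]K_{g,r{\bf W}}$ and the fact that $K_{g,r{\bf W}}$ is an isometry), and finally uses density of $\operatorname{range}K_{g,T}^*$ together with the uniform bound $\|\varphi_r(T)\|\leq\|\varphi({\bf W})\|$ to spread the SOT-convergence to all of $\cH$ and identify the limit with the operator $A$ constructed in part (i).
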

\begin{proof}
Let $\varphi({\bf W})\in F^\infty(g)$ and let $\{p_k({\bf W})\}_{k=1}^\infty\subset \cP({\bf W})$ be a sequence of polynomials such that
SOT-$\lim_{k\to\infty}p_k({\bf W})=\varphi({\bf W})$.  Since $T\in \cD_{g^{-1}}(\cH)$,  Corollary \ref{Berez} shows that
\begin{equation}
\label{pKKp}
p_k(T)K_{g,T}^*x=K_{g,T}^*(p_k({\bf W})\otimes I_\cH)x,\qquad x\in F^2(H_n)\otimes \cH.
\end{equation}
Using the fact that the map $Y\mapsto Y\otimes I$ is SOT-continuous  on bounded sets, we can define the operator  $A:\text{\rm range} K_{g,T}^*\to \cH$  by setting
\begin{equation}\label{A}
A(K_{g,T}^*x):=\lim_{k\to\infty}p_k(T)K_{g,T}^*x=K_{g,T}^*(\varphi({\bf W})\otimes I_\cH)x, \qquad x\in F^2(H_n)\otimes \cH.
\end{equation}
Note that $A$ does not depend on the choice of the sequence  $\{p_k({\bf W})\}_{k=1}^\infty\subset \cP(W)$.
Consequently, according to Theorem \ref{analytic},  we can take 
$$
p_k({\bf W}):=\sum_{s\in \NN, s\leq k} \left(1-\frac{s}{N+1}\right)\sum_{|\alpha |=s} c_\alpha W_\alpha,
$$
in which case
$\|p_k({\bf W})\|\leq \|\varphi({\bf W})\|$.
Suppose that  $T\in \overline{\cD^{pure}_{g^{-1}}(\cH)}$ and let $\{T^{(m)}\}_{m\in \NN}\subset \cD^{pure}_{g^{-1}}(\cH)$ be a sequence such that $\|T-T^{(m)}\|\to 0$ as $m\to \infty$.
Due to Theorem \ref{pure}, we have $\|p_k(T^{(m)})\|\leq \|p_k({\bf W})\|\leq \|\varphi({\bf W})\|$ and 
\begin{equation*}
\begin{split}
\|A(K_{g,T}^*x)\|&\leq \sup_k \|p_k(T)\|\|K_{g,T}^*x\|
\leq \sup_{k, m}  \|p_k(T^{(m)})\|\|K_{g,T}^*x\|
\leq  \|\varphi({\bf W})\| \|K_{g,T}^*x\|
\end{split}
\end{equation*}
for any $x\in F^2(H_n)\otimes \cH.$
Since $T$ is completely non-coisometric, Proposition  \ref{cnc} shows that the noncommutative Berezin kernel $K_{g,T}$ is one-to-one, thus $\text{\rm range}\, K_{g,T}^*$ is dense in $\cH$. Consequently, $A$ can be uniquely be extended to a bounded linear operator on $\cH$, which is also denoted by $A$, such that
$\|Ah\|\leq \|\varphi({\bf W})\|\|h\|$ for $h\in \cH$.

In what follows, we show that $Ah=\lim_{k\to\infty}p_k(T)h$ for any $h\in \cH$. To show that the later limit exists, let $\{y_n\}_{n=1}^\infty\subset \text{\rm range} K_{g,T}^*$ be such that $\|y_n-h\|\to 0$ as $n\to \infty$.
Fix $\epsilon>0$ and let $N\in \NN$ be such that$ \|y_N-h\|<\frac{\epsilon}{2\|\varphi({\bf W})\|}$.
Using the fact that $\|p_k(T^{(m)})\|\leq \|p_k({\bf W})\|\leq \|\varphi({\bf W})\|$ for any $m,k\in \NN$, we deduce that
\begin{equation*}
\begin{split}
\|Ah-p_k(T^{(m)})h\| 
&\leq  \|A\|\|h-y_N\|+\|p_k(T^{(m)})\|\|y_N-h\|+\|Ay_N-p_k(T^{(m)})y_N\|\\
&\leq  2\|\varphi({\bf W})\|\|y_N-h\| +\|Ay_N-p_k(T^{(m)})y_N\|\\
&\leq \epsilon +\|Ay_N-p_k(T^{(m)})y_N\|.
\end{split}
\end{equation*}
Taking $m\to \infty$, we obtain
$$
\|Ah-p_k(T)h\|\leq \epsilon +\|Ay_N-p_k(T)y_N\|.
$$
Due to  relation \eqref{pKKp} and taking $k\to \infty$, we get
$\limsup_{k\to\infty} \|Ah-p_k(T)h\|\leq \epsilon$ for any $\epsilon>0$. Hence, 
$\lim_{k\to\infty} \|Ah-p_k(T)h\|=0$, which proves our assertion.

Now, we define the map $\Psi_T: F^\infty(g)\to B(\cH)$  by setting $\psi_T(\varphi({\bf W}))=\varphi(T):=A$.    
Let $\{\varphi_\iota({\bf W})\}_\iota\subset F^\infty(g)$ be a bounded net such that
$\varphi_\iota({\bf W})\to \varphi({\bf W})\in F^\infty(g)$  in the strong operator topology.
Then $\varphi_\iota({\bf W})\otimes I_\cH\to \varphi({\bf W})\otimes I_\cH$ in the same topology.
According to relation \eqref{A}, we have
$$
\varphi_\iota(T)  K_{g,T}^*x=K_{g,T}^*(\varphi_\iota({\bf W})\otimes I_\cH)x\to    K_{g,T}^*(\varphi({\bf W})\otimes I_\cH)x=\varphi(T)K_{g,T}^*x
$$
for any $x\in F^2(H_n)\otimes \cH$. 
Since $\|\varphi_\iota(T)\|\leq\|\varphi_\iota({\bf W})\|$, the net $\{\varphi_\iota(T)\}_\iota$ is also bounded. Since  $\text{\rm range} K_{g,T}^*$ is dense in $\cH$, a standard approximation argument, as above, shows that 
$\text{\rm SOT-}\lim_\iota\varphi_\iota(T)= \varphi(T)$. In a similar manner, one can prove  a similar result when SOT is replaced by WOT or $w^*$-topology. 
Consequently, since   the map $\Psi_T:F^\infty(g)\to B(\cH)$ is sequentially $w^*$-continuous, we can use Lemma \ref{dual}  to  conclude  that $\Psi_T$ is $w^*$-continuous.

 Now, we prove that $\Psi_T$  is  completely  contractive homomorphism.
 Let $[\varphi_{ij}({\bf W})]_{q}\in M_{q\times q}(\CC)\otimes F^\infty(g)$. 
 According to Theorem \ref{analytic}, there a sequence  of matrices $[p_k^{ij}({\bf W})]_{q}\in M_{q\times q}(\CC)\otimes\cP({\bf W})$ such that 
$[p_k^{ij}({\bf W})]_{q}\to  [\varphi_{ij}({\bf W})]_{q}$ strongly as $k\to \infty$ and
$\|[p_k^{ij}({\bf W})]_{q}\|\leq\| [\varphi^{ij}({\bf W})]_{q}\|$. 
Let $h=\oplus_{i=1}^q h_i\in \oplus_{i=1}^qF^2(H_n)$ and  let $\{T^{(m)}\}_{m\in \NN}\subset \cD^{pure}_{g^{-1}}(\cH)$ be a sequence such that $\|T-T^{(m)}\|\to 0$ as $m\to \infty$.
Note that, due to the results above,
$
 [\varphi_{ij}(T)]_{q}h=\lim_{k\to \infty} [p_k^{ij}(T)]_{q}h
 $
and 
\begin{equation*}
\begin{split}
\| [\varphi_{ij}(T)]_{q}h\|&\leq \sup_{k\in \NN}\| [p_k^{ij}(T)]_{q}\|\| h\|
\leq  \sup_{k,m\in \NN}\| [p_k^{ij}(T^{(m)})]_{q}]_{q}\|\| h\|
\leq   \| [\varphi_{ij}({\bf W})]_{q}\|\| h\|.
\end{split}
\end{equation*}
 Consequently,  $\| [\varphi_{ij}(T)]_{q}\|\leq   \| [\varphi_{ij}({\bf W})]_{q}\|$, which proves that   $\Psi_T$  is  a completely  contractive linear map. On the other hand, since  $\Psi_T$ is a homomorphism on the algebra of polynomials
 $\cP({\bf W})$ and $\Psi_T$ is sequentially  WOT-continuous, one can use the WOT-density of $\cP({\bf W})$ in $F^\infty(g)$ to show that  $\Psi_T$ is a homomorphism on $F^\infty(g)$.

  To prove item (ii), let $T$ and ${\bf W}$ be radially pure $n$-tuples  with respect to $\cD_{g^{-1}}$ and  let $\varphi({\bf W})\in F^\infty(g)$ have the Fourier representation $\sum_{\alpha\in \FF_n^+} c_\alpha W_\alpha$.
 According to the proof of Theorem \ref{pure2},
 $\varphi_r({\bf W}):=\sum_{k=0}^\infty \sum_{|\alpha|=k} r^{|\alpha|} c_\alpha W_\alpha$ converges in the operator norm, 
 $\text{\rm SOT-}\lim_{r\to 1} \varphi_r({\bf W})=\varphi({\bf W})$  and $ \|\varphi_r(W)\|\leq \|\varphi(W)\|$
  for  any $r\in (\delta,1)$, where $\delta\in(0,1)$ is chosen such that both $r{\bf W}$ and $rT$ are pure elements. 
  Consequently, using the von Neumann inequality of Corollary  \ref{vN-disc}, we deduce that
  \begin{equation*}
  \begin{split}
\left\|\sum_{k=0}^\infty\sum_{|\alpha|=k} c_\alpha r^{|\alpha|}T_\alpha\right\|
&\leq \sum_{k=0}^\infty r^k\left\|\sum_{ |\alpha|=k} c_\alpha  T_\alpha\right\|
 \leq \sum_{k=0}^\infty r^k\left\|\sum_{ |\alpha|=k} c_\alpha  W_\alpha\right\|
\leq \frac{1}{1-r}\|\varphi({\bf W})\|<\infty.
\end{split}
\end{equation*}
Therefore, $\sum_{k=0}^\infty\sum_{|\alpha|=k} c_\alpha r^{|\alpha|}T_\alpha$ converges in the operator norm.   
  Using the properties of the Berezin kernel,  for any $t, r \in (\delta, 1)$ such that $ rt\in (\delta,1)$, we have
 \begin{equation}
 \label{Kg}
 K_{g,rT}^*(\varphi_t({\bf W})\otimes I_\cH)=\varphi_t(rT)K_{g,rT}^*.
 \end{equation}
On the other hand,  since  $\varphi (rT)$ and $\varphi_t (rT)$ are given by series which are convergent in norm, for any $\epsilon>0$, we can find $N_0\in \NN$, such that
\begin{equation*}
\begin{split}
\|\varphi_t(rT)-\varphi (rT)\|&\leq\left\| \sum_{k=1}^{N_0}\sum_{|\beta|=k} r^k(t^k-1)c_\beta T_\beta \right\| +\epsilon
\end{split}
\end{equation*}
Taking $t\to 1$, we deduce that $\lim_{t\to 1} \varphi_t(rT)=\varphi(rT)$ in the operator norm. Consequently, relation \eqref{Kg} implies 
$
 K_{g,rT}^*(\varphi({\bf W})\otimes I_\cH)=\varphi(rT)K_{g,rT}^*.
 $
Hence, we also deduce that  $\|\varphi(rT)\|\leq \|\varphi({\bf W})\|$ for any $r\in (\delta, 1)$.
Now, since  $T_i K_{g,T}^*=K_{g,T}^* (W_i\otimes I_\cH)$, $i\in \{1,\ldots, n\}$, and using the convergence in norm of the series defining $\varphi(rT)$ and $\varphi(r{\bf W})$, we deduce that
\begin{equation}\label{d}
\varphi(rT)K_{g,T}^*=K_{g,T}^* (\varphi(r{\bf W})\otimes I_\cH)
\end{equation}
for any $r\in(\delta, 1)$.
Taking $r\to 1$ in this relation, we deduce that the map $\tilde A:\text{\rm range}K_{g,T}^*\to \cH$ given by
$\tilde Ay:=\lim_{r\to 1} \varphi(rT)y$ for $y\in \text{\rm range}K_{g,T}^*$,  is well-defined, linear, and
\begin{equation*}
\begin{split}
\tilde A(K_{g,T}^*x)&\leq \limsup_{r\to 1} \| \varphi(rT)\|\|K_{g,T}^*x\|
\leq \|\varphi({\bf W})\| \|K_{g,T}^*x\|.
\end{split}
\end{equation*}
Consequently, $\tilde A$ has a  unique continuous extension  to $\cH$ and 
 $\|\tilde A\|\leq \|\varphi({\bf W})\|$. Now,  using a standard approximation argument,  we can  show that $\tilde A h=\lim_{r\to 1} \varphi(rT)h$ for  every $h\in \cH$.
 Due to the definition of $\tilde A$ and relation \eqref{d}, we have
 $\tilde AK_{g,T}^*=K_{g,T}^* (\varphi({\bf W})\otimes I_\cH)$.  On the other hand,  due to relation \eqref{A},  we have
 $AK_{g,T}^* =K_{g,T}^*(\varphi({\bf W})\otimes I_\cH)$. Consequently, since both operators  $A$ and $\tilde A$ are  bounded and  $\text{\rm range}K_{g,T}^*$ is dense in $\cH$, we conclude that $A=\tilde A$.
 The proof is complete.
\end{proof}

\section{Multipliers and multi-analytic operators}

This section is devoted to the multipliers on Hilbert spaces $F^2(g)$ associated with formal power series $g$ with 
strictly positive coefficients and to the multi-analytic operators with respect to the universal model associated with $g$.

Let  $g= 1+\sum_{ |\alpha|\geq 1} b_\alpha
Z_\alpha$ be  a  formal power series with  $b_\alpha>0$ and let $F^2(g)$ be the Hilbert space of formal power series in indeterminates $Z_1,\ldots, Z_n$ with orthogonal basis $\{Z_\alpha:\ \alpha\in \FF_n^+\}$ such that $\|Z_\alpha\|_g:=\frac{1}{\sqrt{b_\alpha}}$. Note that
$$
F^2(g)=\left\{ \zeta=\sum_{\alpha\in \FF_n^+}c_\alpha Z_\alpha: \   \|\zeta\|_g^2:=\sum_{\alpha\in \FF_n^+}\frac{1}{b_\alpha} |c_\alpha|^2<\infty, \ c_\alpha\in \CC\right\}
$$
and $\left\{\sqrt{b_\alpha}Z_\alpha\right\}_{\alpha\in \FF_n^+}$ is an orthonormal basis for $F^2(g)$.
The {\it left multiplication} operator  $L_{Z_i}$ on $F^2(g)$ is  defined by  $L_{Z_i}\zeta:=Z_i\zeta$ for all $\zeta\in F^2(g)$. We also  use the notation $L_{Z_i}^{(g)}$ when needed.
One can easily see that $L_{Z_i}$ is a bounded operator  if and only if 
\begin{equation}\label{sup1}
\sup_{\alpha\in \FF_n^+} \frac{b_\alpha}{b_{g_i\alpha}}<\infty.
\end{equation}
We remark that  the operator $U:F^2(H_n)\to F^2(g)$,  defined by $U(e_\alpha):=\sqrt{b_\alpha}Z_\alpha$, $\alpha\in \FF_n^+$, is unitary and
 $UW_i=L_{Z_i}U, \  i\in \{1,\ldots,n\},
 $
 where ${\bf W}=(W_1,\ldots, W_n)$ is the universal model associated with $g$.
Similarly, we define the {\it right multiplication} operator  $R_{Z_i}:F^2(g)\to F^2(g)$  by   setting $R_{Z_i}\zeta:=\zeta Z_i$ for all $\zeta\in F^2(g)$. We also  use the notation $R_{Z_i}^{(g)}$ when needed.  Note that $R_{Z_i}$ is a  bounded operator  if and only if 
\begin{equation}\label{sup2}
\sup_{\alpha\in \FF_n^+} \frac{b_\alpha}{b_{\alpha g_i}}<\infty.
\end{equation}
We also have $U\Lambda_i=R_{Z_i}U$ for every $ i\in \{1,\ldots,n\}$, where
 ${\bf \Lambda}:=(\Lambda_1,\ldots, \Lambda_n)$ was defined in Section 2.

Let $\cE_1$ and $\cE_2$ be Hilbert spaces and let $f= 1+\sum_{ |\alpha|\geq 1} b_\alpha^\prime
Z_\alpha$ be  a  formal power series where the coefficients $\{b_\alpha'\}$ satisfy similar properties to  those satisfied by 
$\{b_\alpha\}$.
A formal power series  $\varphi=\sum_{\alpha\in \FF_n^+} Z_\alpha\otimes A_{(\alpha)}$, with operator-valued  coefficients $A_{(\alpha)}\in B(\cE_1,\cE_2)$, is called ({\it twisted}) {\it right multiplier} from $F^2(g)\otimes \cE_1$ to $ F^2(f)\otimes \cE_2$ and  we denote $\varphi\in \cM^r(F^2(g)\otimes \cE_1,F^2(f)\otimes \cE_2)$ if,  for any
$\zeta=\sum_{\beta\in \FF_n^+} Z_\beta \otimes h_{(\beta)}$ in $F^2(g)\otimes \cE_1$, we have
$$
\zeta\varphi:=\sum_{\gamma\in \FF_n^+}  Z_\gamma\otimes \sum_{\alpha,\beta\in\FF_n^+, \beta \alpha =\gamma} A_{(\alpha)}h_{(\beta)}\in F^2(f)\otimes \cE_2.
$$
Due to the closed graph theorem,  the right multiplication operator
$R_\varphi :F^2(g)\otimes \cE_1\to F^2(f)\otimes \cE_2$ defined by $R_\varphi \zeta:=\zeta\varphi$  is a  bounded  linear operator.   The (right) multiplier norm of $\varphi$ is  given by 
$\|\varphi\|^{\cM^r}:=\|R_\varphi\|$.  
  If $g=f$ and $\cE_1=\cE_2=\cE$, we use the notation
$\cM^r(F^2(f)\otimes \cE):=\cM^r(F^2(f)\otimes \cE,F^2(f)\otimes \cE)$. Note that  
$\cM^r(F^2(f)\otimes \cE)$ becomes a Banach algebra with respect to the multiplier norm.
Similarly, we  introduce the the set of   {\it left multipliers} from $F^2(g)\otimes \cE_1$ to $ F^2(f)\otimes \cE_2$ and denote  it by $ \cM^\ell(F^2(g)\otimes \cE_1,F^2(f)\otimes \cE_2)$. In this case, the left multiplication operator
$L_\varphi :F^2(g)\otimes \cE_1\to F^2(f)\otimes \cE_2$   is defined by $L_\varphi \zeta:=\varphi \zeta$   and it is a  bounded  linear operator. The (left) multiplier norm of $\varphi$ is  given by 
$\|\varphi\|^{\cM^\ell}:=\|L_\varphi\|$.

\begin{theorem} \label{right-mult}   Let  $f$ and $g$ be formal power series satisfying the conditions  corresponding to  relations \eqref{sup1} and \eqref{sup2}. Then   the following statements hold.
\begin{enumerate}
\item[(i)]
 A  bounded linear operator $X:F^2(g)\otimes \cE_1\to F^2(f)\otimes \cE_2$ satisfies the relation
$$
X(L_i^{(g)}\otimes I_{\cE_1})=(L_i^{(f)}\otimes I_{\cE_2})X, \qquad i\in \{1,\ldots, n\},
$$
if and only if  there is  $\varphi\in \cM^r(F^2(g)\otimes \cE_1,F^2(f)\otimes \cE_2)$ such that $X=R_\varphi$.
\item[(ii)] A bounded linear operator $X:F^2(g)\otimes \cE_1\to F^2(f)\otimes \cE_2$ satisfies the relation
$$
X(R_i^{(g)}\otimes I_{\cE_1})=(R_i^{(f)}\otimes I_{\cE_2})X, \qquad i\in \{1,\ldots, n\},
$$
if and only if  there is  $\varphi\in \cM^\ell(F^2(g)\otimes \cE_1,F^2(f)\otimes \cE_2)$ such that $X=L_\varphi$.
\end{enumerate}
\end{theorem}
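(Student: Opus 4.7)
Both (i) and (ii) will be proved by the same strategy of recovering the multiplier from the image of $1\otimes\cdot$; I will describe the argument for part~(i) in detail and then indicate the modification for part~(ii). The converse direction of (i) is immediate: at the level of formal power series, right multiplication by any fixed $\varphi = \sum_\alpha Z_\alpha\otimes A_{(\alpha)}$ commutes with left multiplication by each $Z_i$. Applying this observation to generators of the form $\zeta\otimes e$ and using the boundedness of $R_\varphi$, $L_{Z_i}^{(g)}\otimes I_{\cE_1}$, and $L_{Z_i}^{(f)}\otimes I_{\cE_2}$ extends the intertwining relation to all of $F^2(g)\otimes\cE_1$.

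For the main implication of (i), suppose $X(L_{Z_i}^{(g)}\otimes I_{\cE_1}) = (L_{Z_i}^{(f)}\otimes I_{\cE_2})X$ for each $i$. For $e\in \cE_1$, I will expand $X(1\otimes e) = \sum_\gamma Z_\gamma\otimes h_\gamma^{(e)}$ in $F^2(f)\otimes\cE_2$ and set $A_{(\gamma)}e := h_\gamma^{(e)}$. Linearity of $A_{(\gamma)}$ in $e$ comes from linearity of $X$; boundedness of $A_{(\gamma)}:\cE_1\to\cE_2$ comes from writing it as the composition of the bounded map $e\mapsto X(1\otimes e)$ with the bounded coefficient-extraction functional $\zeta\mapsto (\text{$\gamma$-th coefficient of }\zeta)$ on $F^2(f)$. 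This produces the candidate $\varphi := \sum_\gamma Z_\gamma\otimes A_{(\gamma)}$.

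The next step is to propagate the relation $X = R_\varphi$ from $1$ to the basis via the intertwining assumption. For $\beta = g_{i_1}\cdots g_{i_k}$, writing $Z_\beta = L_{Z_{i_1}}^{(g)}\cdots L_{Z_{i_k}}^{(g)}(1)$ and iterating the intertwining relation yields
\begin{equation*}
X(Z_\beta\otimes e) \;=\; L_{Z_{i_1}}^{(f)}\cdots L_{Z_{i_k}}^{(f)}\bigl(X(1\otimes e)\bigr) \;=\; Z_\beta\cdot\sum_{\gamma}Z_\gamma\otimes A_{(\gamma)}e \;=\; \sum_\gamma Z_{\beta\gamma}\otimes A_{(\gamma)}e,
\end{equation*}
which is exactly the expression of $(Z_\beta\otimes e)\varphi$ provided by the paper's definition of $\cM^r$. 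By linearity, $X$ agrees with the formal right multiplication by $\varphi$ on the dense subspace of polynomials in $F^2(g)\otimes\cE_1$. Boundedness of $X$ then forces $\zeta\varphi\in F^2(f)\otimes\cE_2$ for every $\zeta\in F^2(g)\otimes\cE_1$: each coordinate of $\zeta\varphi$ is a finite sum in the coefficients of $\zeta$ and hence stable under polynomial truncation $\zeta^{(N)}\to \zeta$, while $X(\zeta^{(N)})\to X(\zeta)$ coordinate-by-coordinate. Therefore $\varphi\in \cM^r(F^2(g)\otimes\cE_1, F^2(f)\otimes\cE_2)$ and $X = R_\varphi$.

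Part~(ii) is handled by the same template with $R_{Z_i}$ in place of $L_{Z_i}$. The only bookkeeping difference is that $Z_\beta$ is generated from $1$ by iterating the right multiplications in reversed order, $Z_\beta = R_{Z_{i_k}}^{(g)}\cdots R_{Z_{i_1}}^{(g)}(1)$, so iterating the intertwining relation this time produces $X(Z_\beta\otimes e) = X(1\otimes e)\cdot Z_\beta$, which coincides with left multiplication by $\varphi := \sum_\gamma Z_\gamma\otimes A_{(\gamma)}$ on $Z_\beta\otimes e$. I do not anticipate a serious obstacle; the only genuinely delicate point, common to both parts, is the last coordinate-wise step verifying that the formally-constructed $\varphi$ really lies in $\cM^r$ (respectively $\cM^\ell$), i.e.\ that the right (resp.\ left) multiplication by $\varphi$ sends every element of the source Hilbert space into the target Hilbert space, and this is precisely what the polynomial density plus continuity of $X$ delivers.
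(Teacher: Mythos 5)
Your argument is correct and complete: recovering the operator coefficients from $X(1\otimes e)$, propagating the identity to monomials via the intertwining relations, and then using polynomial density plus coordinate-wise continuity to conclude $X=R_\varphi$ (resp. $X=L_\varphi$) is exactly the standard route. The paper itself omits the proof, deferring to Theorem 4.3 of \cite{Po-Bergman}, and your argument is essentially the one used there, so there is nothing to flag beyond the cosmetic point that the iterated creation operators in the propagation step should carry the tensor factor $\otimes I_{\cE_2}$.
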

 Since the proof  of the theorem is similar to   the proof  of Theorem 4.3 from \cite{Po-Bergman}, we omit it.

Let ${\bf W}^{(f)}:=(W_1^{(f)},\ldots, W_n^{(f)})$  and ${\bf W}^{(g)}:=(W_1^{(g)},\ldots, W_n^{(g)})$  be the   weighted left creation operators  associated with the formal power series $f$ and $g$, respectively.  We say that  a bounded linear   operator
$\widehat X : F^2(H_n)\otimes \cE_1\to F^2(H_n)\otimes \cE_2$ is multi-analytic with respect to    ${\bf W}^{(g)} $ and ${\bf W}^{(f)}$   if  
$$
\widehat X(W_i^{(g)}\otimes I_{\cE_1})=(W_i^{(f)}\otimes I_{\cE_2}) \widehat X,\qquad
i\in \{1,\ldots, n\}.
$$
If $g= 1+\sum_{ |\alpha|\geq 1} b_\alpha
Z_\alpha$,  we define the unitary operator
  $U_{g}:F^2(H_n)\to F^2(g)$    by 
$
U_{g}(e_\alpha):=\sqrt{b_\alpha} Z_\alpha$,  $\alpha\in \FF_n^+.
$
A consequence of  Theorem \ref{right-mult} is  the following 

\begin{corollary} \label{hatX}     A bounded linear   operator
$\widehat X : F^2(H_n)\otimes \cE_1\to F^2(H_n)\otimes \cE_2$ is multi-analytic with respect to   ${\bf W}^{(g)} $ and ${\bf W}^{(f)}$ if and only if
$$
\widehat X=(U_{f}^*\otimes I_{\cE_2}) R_\varphi (U_{g}\otimes I_{\cE_1})
$$
for some right multiplier $\varphi\in \cM^r(F^2(g)\otimes \cE_1,F^2(f)\otimes \cE_2)$.
\end{corollary}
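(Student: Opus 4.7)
The plan is to reduce the statement directly to Theorem \ref{right-mult}(i) via the unitary conjugations $U_g$ and $U_f$. Recall that these unitaries were constructed precisely so that $U_g W_i^{(g)} = L_{Z_i}^{(g)} U_g$ and $U_f W_i^{(f)} = L_{Z_i}^{(f)} U_f$ for all $i\in\{1,\ldots,n\}$, by sending $e_\alpha \mapsto \sqrt{b_\alpha}Z_\alpha$; the analogous intertwining tensored with identities on $\cE_1,\cE_2$ remains valid.

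Given $\widehat X : F^2(H_n)\otimes \cE_1 \to F^2(H_n)\otimes\cE_2$, define
\[
X := (U_f\otimes I_{\cE_2})\,\widehat X\,(U_g^*\otimes I_{\cE_1}) : F^2(g)\otimes\cE_1 \to F^2(f)\otimes\cE_2.
\]
First I would verify that the condition $\widehat X(W_i^{(g)}\otimes I_{\cE_1}) = (W_i^{(f)}\otimes I_{\cE_2})\widehat X$ is equivalent, after conjugating both sides by the appropriate unitaries, to $X(L_i^{(g)}\otimes I_{\cE_1}) = (L_i^{(f)}\otimes I_{\cE_2})X$. This is a straightforward calculation using the intertwining relations above and the fact that $(U_g\otimes I)(U_g^*\otimes I) = I$.

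Next, by Theorem \ref{right-mult}(i) applied to $X$, the intertwining relation for $X$ is equivalent to the existence of $\varphi \in \cM^r(F^2(g)\otimes\cE_1, F^2(f)\otimes\cE_2)$ with $X = R_\varphi$. Finally, solving for $\widehat X$ in terms of $X$ yields
\[
\widehat X = (U_f^*\otimes I_{\cE_2})\,X\,(U_g\otimes I_{\cE_1}) = (U_f^*\otimes I_{\cE_2})\,R_\varphi\,(U_g\otimes I_{\cE_1}),
\]
which is exactly the claimed formula. The converse direction follows by reversing each step, using that unitary conjugation preserves boundedness and that $R_\varphi$ satisfies the intertwining with the $L_i$'s by definition. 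There is no substantive obstacle here; the only point requiring minor care is bookkeeping on which tensor factor carries which identity, and checking that the unitary $U_g$ really does conjugate $W_i^{(g)}$ into $L_i^{(g)}$, which is immediate from the definitions $W_i e_\alpha = \sqrt{b_\alpha/b_{g_i\alpha}}\, e_{g_i\alpha}$ and $L_{Z_i}^{(g)}(\sqrt{b_\alpha}Z_\alpha) = \sqrt{b_\alpha}Z_{g_i\alpha}$.
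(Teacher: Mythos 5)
Your argument is correct and is exactly the intended one: the paper states the corollary as an immediate consequence of Theorem \ref{right-mult} without writing out a proof, and the conjugation by $U_g\otimes I_{\cE_1}$ and $U_f\otimes I_{\cE_2}$ (using $U_gW_i^{(g)}=L_{Z_i}^{(g)}U_g$ and its analogue for $f$) is precisely the reduction being invoked. No issues.
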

 
 Combining the   results above  with Theorem  \ref{closure}   and  Proposition  \ref{multi},   one can easily deduce that following

\begin{theorem} \label{ident}  The right multiplier algebra $\cM^r(F^2(f)\otimes \cK)$ is completely isometrically isomorphic to the  noncommutative Hardy algebra $R^\infty(f)\bar\otimes_{min} B(\cK)$. Moreover,  the map
$\Psi: \cM^r(F^2(f)\otimes \cK)\to  R^\infty(f)\bar\otimes_{min} B(\cK)$ defined by
$$\Psi(\varphi):=(U_{f}^*\otimes I_{\cK}) R_\varphi (U_{f}\otimes I_{\cK}),\qquad 
\varphi\in \cM^r(F^2(f)\otimes \cE) 
$$
is a completely  isometric isomorphism.
\end{theorem}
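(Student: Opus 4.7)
The strategy is to assemble three ingredients already in place: the intertwining characterization of right multipliers in Theorem \ref{right-mult}(i), the identification of $\widehat X$ with $R_\varphi$ via the unitary $U_f$ from Corollary \ref{hatX}, and the characterization of operators commuting with $W_1^{(f)}\otimes I_\cK,\ldots, W_n^{(f)}\otimes I_\cK$ as elements of $R^\infty(f)\bar\otimes_{\min} B(\cK)$, i.e.\ the right-sided analog of Proposition \ref{multi} stated in the remark immediately following it. Since $\Psi$ is, by construction, conjugation by the unitary $U_f\otimes I_\cK$, the analytic content is concentrated entirely in these three citations; the task is to verify that the map lands in the correct algebra, is surjective onto it, is completely isometric, and is multiplicative.

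First I would check that $\Psi$ is well-defined into the claimed target. Take $\varphi\in \cM^r(F^2(f)\otimes\cK)$. By the defining intertwining relations $U_f W_i^{(f)}=L_{Z_i}^{(f)}U_f$ for $i\in\{1,\ldots,n\}$ and Theorem \ref{right-mult}(i), $R_\varphi$ commutes with $L_{Z_i}^{(f)}\otimes I_\cK$, hence
\[
\widehat X := (U_f^*\otimes I_\cK)R_\varphi(U_f\otimes I_\cK)
\]
commutes with each $W_i^{(f)}\otimes I_\cK$. By the right-sided version of Proposition \ref{multi} (noted in the remark after it), $\widehat X\in R^\infty(f)\bar\otimes_{\min}B(\cK)$. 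Thus $\Psi$ takes values in the target algebra.

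Next I would handle surjectivity by running the same chain backwards: given $Y\in R^\infty(f)\bar\otimes_{\min}B(\cK)$, the right-sided analog of Proposition \ref{multi} gives $Y(W_i^{(f)}\otimes I_\cK)=(W_i^{(f)}\otimes I_\cK)Y$; conjugating by $U_f\otimes I_\cK$ produces an operator that intertwines $L_{Z_i}^{(f)}\otimes I_\cK$, and Theorem \ref{right-mult}(i) (equivalently, Corollary \ref{hatX} applied with $g=f$) supplies a unique $\varphi\in\cM^r(F^2(f)\otimes\cK)$ with $R_\varphi=(U_f\otimes I_\cK)Y(U_f^*\otimes I_\cK)$; then $\Psi(\varphi)=Y$. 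Multiplicativity is automatic: for $\varphi,\psi\in \cM^r(F^2(f)\otimes\cK)$, the right-multiplier product is defined so that $R_{\varphi\psi}=R_\psi R_\varphi$ (composition of right multiplications reverses order), and conjugation by the unitary $U_f\otimes I_\cK$ respects composition, so $\Psi$ is a homomorphism with respect to the Banach-algebra structures of both sides.

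Finally, for the complete isometry I would argue at each matrix level $q$. Apply the previous discussion with $\cK$ replaced by $\cK^q$ (equivalently, pass to the $q\times q$ amplification): an element $[\varphi_{ij}]_{q\times q}$ of $M_q(\cM^r(F^2(f)\otimes\cK))$ is naturally identified with a right multiplier in $\cM^r(F^2(f)\otimes\cK^q)$, whose multiplier norm is by definition $\|[R_{\varphi_{ij}}]_{q\times q}\|$, and conjugation by the unitary $(U_f\otimes I_{\cK^q})$ preserves operator norm, giving
\[
\|[\Psi(\varphi_{ij})]_{q\times q}\| = \|[R_{\varphi_{ij}}]_{q\times q}\| = \|[\varphi_{ij}]_{q\times q}\|^{\cM^r}.
\]
This gives both complete contractivity and complete isometry in one stroke.

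The only real point of care, and what I expect to be the main obstacle, is the right-sided analog of Proposition \ref{multi}: one must either cite the explicit symmetry between the roles of $W_i$ and $\Lambda_i$ (and thus between $F^\infty(g)$ and $R^\infty(g)$), or write out the argument using the weighted right creation operators and the fact, used throughout Section 2, that $F^\infty(g)=\{\Lambda_1,\ldots,\Lambda_n\}'$ has a right-handed companion $R^\infty(g)=\{W_1,\ldots,W_n\}'$. Once that analog is in hand, every other step above reduces to bookkeeping through the unitary $U_f\otimes I_\cK$.
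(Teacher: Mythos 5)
Your proposal is correct and follows exactly the route the paper intends: the paper gives no written proof of Theorem \ref{ident}, stating only that it follows by combining Theorem \ref{right-mult}, Corollary \ref{hatX}, Theorem \ref{closure}, and Proposition \ref{multi} (together with its right-handed analog for operators commuting with $W_1\otimes I_\cK,\ldots,W_n\otimes I_\cK$), and your writeup is a faithful expansion of precisely that deduction, including the correct identification of the one point needing care (the $R^\infty(f)$ version of Proposition \ref{multi}) and the order reversal $R_{\varphi\psi}=R_\psi R_\varphi$ in the multiplier product.
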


It is quite clear that there is a  version of Theorem \ref{ident}   for the left  multiplier algebra $\cM^\ell(F^2(f)\otimes \cK)$.   We remark that,  Theorem
\ref{ident} can be used to deduce that  
$\cM^r(F^2(f)\otimes\cE_1,F^2(f)\otimes\cE_2)$ is  completely isometrically isomorphic to   $R^\infty(f)\bar\otimes_{min} B(\cE_1, \cE_2)$.

\section{Admissible free holomorphic functions for operator model theory}

 The goal of this section is to introduce
  several classes of admissible free holomorphic functions  for  operator model theory and the associated noncommutative domains. The examples presented  here will be refered to throughout the paper.

\begin{definition}  We say that  the noncommutative  set $\cD_{g^{-1}}$ is a regular domain if $g$ is a  free holomorphic function in a neighborhood of the origin   such that
$
g^{-1} =1+\sum_{\alpha\in \FF_n^+, |\alpha|\geq 1} a_\alpha Z_\alpha
$
for some coefficients $a_\alpha\leq  0$ and $a_{g_i}<0$ for every $i\in \{1,\ldots, n\}$. 
\end{definition}

\begin{example} \label{complete-Nevanlinna}  
Let $g= 1+\sum_{ |\alpha|\geq 1} b_\alpha
Z_\alpha$ be   a free holomorphic function in a neighborhood of the origin such that
$
g^{-1}=1+ \sum_{ |\alpha|\geq 1} a_\alpha
Z_\alpha,
$
where $a_\alpha\leq 0$ for $|\alpha|\geq 1$ and $a_{g_i}<0$ for every $i\in \{1,\ldots, n\}$.  A close look at the results from   \cite{Po-domains} reveals that $g$ is  an admissible free holomorphic function.   The regular domains  were  extensively studied in  \cite{Po-domains}. In this case, we have 
$$
b_\alpha b_\beta\leq  b_{\alpha\beta},\qquad \alpha,\beta\in \FF_n^+.
$$
We remark  that the converse is not true.  A weaker converse of this result  was considered in \cite{APo1} and will be extended  in Theorem  \ref{sub-cN}. 
\end{example}

The next result provides sufficient conditions on a free holomorphic functions $g$  to ensure  that $\cD_{g^{-1}}$  is a regular domain. This result  extends the corresponding one  from \cite{APo1} and the proof is similar.  

\begin{theorem} \label{sub-cN}  Let $g= 1+\sum_{ |\alpha|\geq 1} b_\alpha
Z_\alpha$ be a free holomorphic function in a neighborhood of the origin,

which satisfies   the conditions
\begin{enumerate}
\item[(i)] $b_{g_0}=1$, $b_\alpha>0$ for $|\alpha|\geq 1$;
\item[(ii)]   for any $\beta\in \FF_n^+$ and $i,j\in \{1,\ldots, n\}$,
$
\frac{b_{\beta g_i}}{b_{g_j \beta g_i}}\leq \frac{b_{\beta }}{b_{g_j \beta }}.
$
  \end{enumerate}
Then  $g$ is  an admissible free holomorphic function and  $\cD_{g^{-1}}$ is a regular domain. 
\end{theorem}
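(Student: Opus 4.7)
The plan is to verify each of the three admissibility conditions (a), (b), (c) from the definition preceding Theorem \ref{sub-cN} together with the regular-domain sign constraints on $g^{-1}=1+\sum_{|\gamma|\geq 1}a_\gamma Z_\gamma$; condition (b) is supplied directly by hypothesis. For condition (a), I would iterate (ii) along the letters of $\alpha=g_{i_1}\cdots g_{i_k}$: successively applying (ii) with $\beta=g_{i_1}\cdots g_{i_{\ell-1}}$ and $g_i=g_{i_\ell}$ yields
$$\frac{b_\alpha}{b_{g_j\alpha}} \leq \frac{b_{g_{i_1}\cdots g_{i_{k-1}}}}{b_{g_j g_{i_1}\cdots g_{i_{k-1}}}} \leq \cdots \leq \frac{b_{g_0}}{b_{g_j}}=\frac{1}{b_{g_j}},$$
so $\sup_\alpha \frac{b_\alpha}{b_{g_j\alpha}} \leq \frac{1}{b_{g_j}} < \infty$, establishing (a).

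The heart of the proof is to show $a_\gamma \leq 0$ for all $|\gamma|\geq 1$. First I would upgrade (ii) to the full \emph{supermodularity inequality}
$$b_{\alpha_1 \beta}\, b_{\beta \alpha_2} \leq b_\beta\, b_{\alpha_1 \beta \alpha_2} \qquad (\star)$$
for arbitrary $\alpha_1,\beta,\alpha_2 \in \FF_n^+$, by induction on $|\alpha_1|+|\alpha_2|$, starting from (ii) and extending one letter at a time by multiplying two adjacent instances of (ii) and cancelling the common factor. Setting $\beta=g_0$ in $(\star)$ gives sub-multiplicativity $b_{\alpha_1}b_{\alpha_2}\leq b_{\alpha_1\alpha_2}$. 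I then prove $\tilde a_\gamma:=-a_\gamma \geq 0$ by strong induction on $|\gamma|$. The base case $|\gamma|=1$ is $\tilde a_{g_i}=b_{g_i}>0$ from Lemma \ref{lem1}, and $|\gamma|=2$ is $\tilde a_{g_j g_i} = b_{g_j g_i}-b_{g_j}b_{g_i}\geq 0$ by sub-multiplicativity. For the general inductive step I would use both recursions \eqref{sum1} and \eqref{sum2}, namely $\tilde a_\gamma = b_\gamma - \sum_{\beta\alpha=\gamma,|\beta|,|\alpha|\geq 1} b_\beta \tilde a_\alpha$ and the symmetric identity, and repeatedly apply $(\star)$ to pair prefix-by-suffix terms so that the coefficients of the previously-known non-negative quantities $\tilde a_\delta$, $|\delta|<|\gamma|$, are themselves non-negative. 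The three-letter case serves as a template: one shows $\tilde a_{g_j g_i g_k}=(b_\gamma-b_{g_jg_i}b_{g_k})-b_{g_j}\tilde a_{g_i g_k}$, which is $\geq 0$ by combining sub-multiplicativity with $(\star)$ at $\beta=g_i$, $\alpha_1=g_j$, $\alpha_2=g_k$.

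Once $a_\alpha\leq 0$ is in place for all $|\alpha|\geq 1$ (with $a_{g_i}=-b_{g_i}<0$ strict), condition (c) follows immediately via Proposition \ref{abb}: by \eqref{sum1} the total sum $\sum_{\beta\gamma=\alpha}\tfrac{a_\beta b_\gamma}{b_\alpha}=0$, so the truncated sum equals $1-\sum_{|\beta|>N,\beta\gamma=\alpha}\tfrac{a_\beta b_\gamma}{b_\alpha}$, and each omitted summand is non-positive, whence the partial sum lies in $[0,1]$, uniformly in $N$ and $\alpha$. Since $g$ then satisfies (a), (b), (c), it is admissible, and the sign structure $a_\alpha\leq 0$, $a_{g_i}<0$ makes $\cD_{g^{-1}}$ a regular domain in the sense of the preceding definition. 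The main obstacle is the general inductive step for $\tilde a_\gamma\geq 0$: extracting a uniformly non-negative rearrangement from the alternating sum in Lemma \ref{lem1} requires a careful combinatorial coupling of the left and right recursions via $(\star)$, and I would expect the bookkeeping to follow the same pattern as the corresponding argument in \cite{APo1}.
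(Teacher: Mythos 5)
Your skeleton is the right one, and it is worth noting that the paper itself offers no proof of Theorem \ref{sub-cN} beyond the remark that it ``extends the corresponding one from \cite{APo1} and the proof is similar,'' so your reconstruction is being measured against that intended argument. Your treatment of condition (a) (telescoping hypothesis (ii) down to $b_{g_0}=1$), your derivation of the supermodularity inequality $(\star)$ by multiplying adjacent instances of (ii) and cancelling common factors, and your deduction of condition (c) from $a_\alpha\le 0$ via Proposition \ref{abb} are all correct (modulo a harmless slip: since the full sum vanishes, the truncated sum equals $-\sum_{|\beta|>N,\,\beta\gamma=\alpha}a_\beta b_\gamma/b_\alpha$, not $1$ minus that tail; your conclusion that it lies in $[0,1]$ still stands). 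The one genuine gap is the step you yourself flag: the induction showing $\tilde a_\gamma:=-a_\gamma\ge 0$ for all $|\gamma|\ge 2$. You verify only $|\gamma|\le 3$ and then express the expectation that the ``bookkeeping'' goes through as in \cite{APo1}. Since this positivity is the entire content of the regularity assertion, the proof as written is not complete.

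The missing step closes in a few lines, and more simply than your description suggests: no coupling of the left and right recursions is needed, only \eqref{sum2} together with the instance of $(\star)$ having a one-letter left factor. Write $\gamma=g_j\gamma'$ with $|\gamma'|\ge 1$. Every factorization $\gamma=\alpha\beta$ with $|\alpha|\ge1$, $|\beta|\ge 1$ has $\alpha=g_j\alpha'$ with $\gamma'=\alpha'\beta$, so \eqref{sum2} gives
$$
\tilde a_\gamma=b_{g_j\gamma'}-\sum_{\alpha'\beta=\gamma',\,|\beta|\ge 1}b_{g_j\alpha'}\,\tilde a_\beta .
$$
By $(\star)$ with left word $g_j$, middle word $\alpha'$, and right word $\beta$ we have $b_{g_j\alpha'}b_{\gamma'}\le b_{\alpha'}b_{g_j\gamma'}$, that is, $b_{g_j\alpha'}\le (b_{g_j\gamma'}/b_{\gamma'})\,b_{\alpha'}$. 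Since $\tilde a_\beta\ge 0$ for $1\le|\beta|<|\gamma|$ by the induction hypothesis, and since \eqref{sum2} applied to $\gamma'$ reads $\sum_{\alpha'\beta=\gamma',\,|\beta|\ge 1}b_{\alpha'}\tilde a_\beta=b_{\gamma'}$, we obtain
$$
\sum_{\alpha'\beta=\gamma',\,|\beta|\ge 1}b_{g_j\alpha'}\tilde a_\beta\ \le\ \frac{b_{g_j\gamma'}}{b_{\gamma'}}\sum_{\alpha'\beta=\gamma',\,|\beta|\ge 1}b_{\alpha'}\tilde a_\beta\ =\ b_{g_j\gamma'},
$$
whence $\tilde a_\gamma\ge 0$. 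This is precisely the classical Kaluza argument (log-convexity of a positive sequence forces the reciprocal power series to have nonpositive coefficients), with hypothesis (ii) playing the role of log-convexity; with this paragraph inserted, your proof is complete.
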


Let $g= 1+\sum_{ |\alpha|\geq 1} b_\alpha
Z_\alpha$  be  a formal power series  with $b_\alpha=b_{\beta}$ for any $\alpha,\beta \in \FF_n^+$ with $|\alpha|=|\beta|$  such that 
$b_0=1$ and $b_{\alpha}>0$ for every $\alpha\in \FF_n^+$. In this case, the Hilbert space
$$
F^2(g)=\left\{ \zeta=\sum_{\alpha\in \FF_n^+}c_\alpha Z_\alpha: \   \|\zeta\|_g^2:=\sum_{\alpha\in \FF_n^+}\frac{1}{b_{\alpha}} |c_\alpha|^2<\infty, \ c_\alpha\in \CC\right\}
$$
is called {\it unitarily invariant Hilbert space}.  If, in  addition,  
$
\lim_{k\to \infty} \frac{b_k}{b_{k+1}}=1,
$
 where $b_k:=b_{|\alpha|}$ for any $\alpha\in \FF_n^+$ with $|\alpha|=k$,  we call $F^2(g)$  {\it normalized unitarily invariant Hilbert space}.
  
  A simple consequence of Theorem \ref{sub-cN} is the following 
  \begin{corollary}  Let $g= 1+\sum_{ |\alpha|\geq 1} b_{|\alpha|}
Z_\alpha$ be a free holomorphic function in a neighborhood of the origin such that  $b_{|\alpha|}>0$  and  $\left\{\frac{b_{|\alpha|}}{b_{|\alpha|+1}}\right\}$ is a decreasing sequence.
   Then  $g$ is an admissible  free holomorphic function  and 
  $\cD_{g^{-1}}$  is a regular domain. 
  \end{corollary}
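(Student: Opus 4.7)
The plan is straightforward: to derive this corollary as a direct application of Theorem \ref{sub-cN}. So the main task is to verify that the two hypotheses of that theorem are satisfied by a free holomorphic function $g = 1+\sum_{|\alpha|\geq 1} b_{|\alpha|} Z_\alpha$ whose coefficients depend only on word length and whose ratio sequence $\left\{\frac{b_k}{b_{k+1}}\right\}_{k\geq 0}$ is decreasing.

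First, condition (i) is immediate: $b_{g_0}=1$ is part of the standing form of $g$, and the positivity $b_{|\alpha|}>0$ for $|\alpha|\geq 1$ is a direct hypothesis. So I would dispatch condition (i) in one line.

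Next, I would verify condition (ii). The point is that, because $b_\alpha$ depends only on the length $|\alpha|$, for any $\beta\in\FF_n^+$ and any $i,j\in\{1,\ldots,n\}$ one has $b_{\beta g_i}=b_{|\beta|+1}$, $b_{g_j\beta g_i}=b_{|\beta|+2}$, $b_\beta=b_{|\beta|}$, and $b_{g_j\beta}=b_{|\beta|+1}$. Therefore the inequality
\[
\frac{b_{\beta g_i}}{b_{g_j\beta g_i}}\leq \frac{b_\beta}{b_{g_j\beta}}
\]
reduces to
\[
\frac{b_{|\beta|+1}}{b_{|\beta|+2}}\leq \frac{b_{|\beta|}}{b_{|\beta|+1}},
\]
which is precisely the decreasing hypothesis on the sequence $\left\{\frac{b_k}{b_{k+1}}\right\}_{k\geq 0}$ evaluated at $k=|\beta|$ and $k=|\beta|+1$. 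Hence condition (ii) holds as well.

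Having verified both hypotheses, Theorem \ref{sub-cN} yields at once that $g$ is admissible for operator model theory and that $\cD_{g^{-1}}$ is a regular domain, completing the proof. There is essentially no obstacle here; the only point worth being careful about is to make explicit that the length-only dependence of the coefficients allows one to rewrite the ratio condition (ii) purely in terms of $b_k/b_{k+1}$, which is exactly the monotonicity assumption.
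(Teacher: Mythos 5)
Your proof is correct and follows exactly the route the paper intends: the corollary is stated there as ``a simple consequence of Theorem \ref{sub-cN}'', and your verification that the length-only dependence of the coefficients reduces condition (ii) to the monotonicity of $\left\{\frac{b_k}{b_{k+1}}\right\}$ is precisely the observation needed. Nothing is missing.
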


Some classes of  unitarily invariant Hilbert spaces are  given in the following examples.

\begin{example}  \label{Bergman}
 For each $s\in (0,\infty)$, consider  the formal power series
$$
g_s:=1+\sum_{k=1}^\infty\left(\begin{matrix} s+k-1 \\k \end{matrix}\right)(Z_1+\cdots  +Z_n)^k.
$$ 
The associated Hilbert  space $F^2(g_s)$ has orthonormal basis 
$\left\{ \sqrt{\left(\begin{matrix} s+k-1 \\k \end{matrix}\right) }Z_\alpha\right\}_{\alpha\in \FF_n^+}$. It is easy to see that $\frac{b_k}{b_{k+1}}=\frac{k+1}{s+k}$ and, consequently, $\lim_{k\to \infty} \frac{b_k}{b_{k+1}}=1$.
Note  that the sequence $\frac{b_k}{b_{k+1}}$ is decreasing if and only if $s\in (0,1]$. In this case, 
Theorem \ref{sub-cN} implies that  $\cD_{g_s^{-1}}$ is a regular domain.
We remark  that if $s=1$, then the corresponding subspace $F^2(g_1)$ is   the full Fock space on $n$ generators.
On the other hand, if $s=n$ (resp. $s=n+1$) we obtain the noncommutative Hardy (resp. Bergman)  space over the unit ball
 $[B(\cH)^n]_1$. In the particular case when $s\in \NN$, we obtain the noncommutative weighted Bergman space over $[B(\cH)^n]_1$, which was extensively studied in  \cite{Po-Berezin}, \cite{Po-domains}, and \cite{Po-Bergman}.
 The  scale of  generalized Bergman spaces    $\{F^2(g_s):  s\in(0,\infty)\}$ was never considered and studied  in the  multivariable noncommutative setting, except the case when $s\in \NN$.    We remark that when $s\in (0,1]$,   the Hilbert  spaces $F^2(g_s)$  are  noncommutative generalizations of the classical   Besov-Sobolev spaces on the unit ball $\BB_n\subset \CC^n$ with representing kernel $\frac{1}{(1-\left<z,w\right>)^s}$, $z,w\in \BB_n$.
 \end{example}

\begin{example} \label{Dirichlet}
Let $s\in \RR$ and let
$$
\xi_s:=1+\sum_{\alpha\in \FF_n^+, |\alpha|\geq 1} (|\alpha|+1)^s Z_\alpha.
$$
The associated Hilbert  space $F^2(\xi_s)$ has orthonormal basis $\left\{ \sqrt{(|\alpha|+1)^s}Z_\alpha\right\}_{\alpha\in \FF_n^+}$. Since   
$$\frac{b_k}{b_{k+1}}=\frac{(k+1)^s}{(k+2)^s}\to 1,\quad \text{as }\ k\to \infty,
$$
$F^2(\xi_s)$ is  a normalized  unitarily invariant Hilbert space.
 On the other hand,  it is clear that
the sequence $\frac{b_k}{b_{k+1}}$ is decreasing if and only if $s\leq 0$. In this case, Theorem \ref{sub-cN}  shows that  $\cD_{\xi_s^{-1}}$ is a regular domain. We remark that the scale of spaces $\{F^2(\xi_s):  s\in \RR\}$ contains  the noncommutative Dirichlet space  $F^2(\xi_{-1})$over the unit ball
 $[B(\cH)^n]_1$. When $s=0$, we recover again the full Fock space with $n$ generators.
\end{example}

\begin{theorem}  \label{general}  Let    $g= 1+\sum_{ |\alpha|\geq 1} b_\alpha
Z_\alpha$   be  a free holomorphic function in a neighborhood of the origin        such that
the following conditions are satisfied.
\begin{enumerate}
\item[(a)]  $b_\alpha>0$  and   
$\sup_{\alpha\in \FF_n^+} \frac{b_\alpha}{b_{g_i \alpha}}<\infty$  for every $i\in \{1,\ldots, n\}$.
\item[(b)]    If $g^{-1}=\sum_{\alpha\in \FF_n^+} a_\alpha Z_\alpha $ is the inverse of $g$,  there is $N_0\geq 2$ such that $a_\alpha\geq 0$ $($or $a_\alpha\leq 0) $ for every $\alpha\in \FF_n^+$ with $|\alpha|\geq N_0$.
\end{enumerate}
Then $g$ is an admissible free holomorphic function.
\end{theorem}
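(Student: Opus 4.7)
The plan is to verify condition (c) in the definition of admissibility, since conditions (a) and (b) of admissibility coincide with the hypotheses (a)--(b) of the theorem. By Proposition \ref{abb}, condition (c) is equivalent to the uniform bound $\sigma := \sup_{N,\alpha} |\sigma_N(\alpha)| < \infty$, where
$$\sigma_N(\alpha) := \sum_{\substack{\beta\gamma=\alpha\\ 0\leq|\beta|\leq N}} \frac{a_\beta b_\gamma}{b_\alpha}.$$
Lemma \ref{lem1} (relation \eqref{sum1}) gives the starting identity $\sigma_{|\alpha|}(\alpha)=0$ whenever $|\alpha|\geq 1$. Consequently $\sigma_N(\alpha)=0$ as soon as $N\geq|\alpha|$, so the only range that needs work is $N<|\alpha|$, in which
$$\sigma_N(\alpha) = -\sum_{\substack{\beta\gamma=\alpha\\ N<|\beta|\leq|\alpha|}} \frac{a_\beta b_\gamma}{b_\alpha}.$$

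Now hypothesis (b) enters. Suppose first $N\geq N_0$; every $\beta$ in the tail sum then has $|\beta|\geq N_0$, so all $a_\beta$ share a common sign. Since $b_\gamma>0$, the tail has constant sign and we may pass to absolute values:
$$|\sigma_N(\alpha)| = \sum_{\substack{\beta\gamma=\alpha\\ N<|\beta|\leq|\alpha|}} \frac{|a_\beta|\, b_\gamma}{b_\alpha} \leq \sum_{\substack{\beta\gamma=\alpha\\ N_0\leq|\beta|\leq|\alpha|}} \frac{|a_\beta|\, b_\gamma}{b_\alpha}.$$
Applying the zero-sum identity a second time, the same constant-sign argument shows that the right-hand side equals $\left|\sum_{\beta\gamma=\alpha,\, |\beta|<N_0} \frac{a_\beta b_\gamma}{b_\alpha}\right|$, and this is a sum over at most $N_0$ pairs $(\beta,\gamma)$, since for each $k<N_0$ there is at most one prefix of $\alpha$ of length $k$.

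It remains to bound each ratio $\frac{b_\gamma}{b_\alpha}$ uniformly in $\alpha$. Writing $\beta=g_{i_1}\cdots g_{i_k}$, a telescoping argument using hypothesis (a) yields
$$\frac{b_\gamma}{b_{\beta\gamma}} = \frac{b_\gamma}{b_{g_{i_k}\gamma}}\cdot \frac{b_{g_{i_k}\gamma}}{b_{g_{i_{k-1}}g_{i_k}\gamma}}\cdots \frac{b_{g_{i_2}\cdots g_{i_k}\gamma}}{b_{g_{i_1}\cdots g_{i_k}\gamma}} \leq M^{k},$$
where $M := \max_i \sup_{\delta\in\FF_n^+} \frac{b_\delta}{b_{g_i\delta}} < \infty$. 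Thus for $N\geq N_0-1$ and $|\alpha|>N$ we obtain $|\sigma_N(\alpha)| \leq C := M^{N_0-1}\sum_{|\beta|<N_0}|a_\beta|$. The finitely many remaining cases $0\leq N<N_0-1$ are handled by applying the same telescoping bound directly to $\sigma_N(\alpha)$ term by term, which is dominated by the same constant $C$. In all cases $|\sigma_N(\alpha)|\leq C$, which verifies (c) and hence admissibility.

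The chief subtlety is the \emph{double} use of the zero-sum identity to turn the long tail sum into a finite head sum of uniformly bounded length; the sign hypothesis (b) is precisely what legitimises moving the absolute value inside the sum at the crucial step. Beyond this, only the elementary telescoping estimate from (a), together with Proposition \ref{abb} and Lemma \ref{lem1}, are needed.
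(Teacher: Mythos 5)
Your proof is correct and follows essentially the same route as the paper's: reduction to the diagonal-norm criterion of Proposition \ref{abb}, the zero-sum identity \eqref{sum1} to trade the tail for the head, the constant-sign hypothesis to justify pulling absolute values inside, and the telescoping estimate $\frac{b_\gamma}{b_{\beta\gamma}}\leq M^{|\beta|}$ from hypothesis (a). The only cosmetic difference is that the paper bounds the partial sums by showing $0\leq S_N\leq S_{N_0}$ directly, whereas you pass through the tail and back; you are also slightly more explicit than the paper about the finitely many small-$N$ (and small-$|\alpha|$) cases.
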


\begin{proof} Assume that $a_\alpha\leq 0$  for every $\alpha\in \FF_n^+$ with $|\alpha|\geq N_0$.
 We prove that there is $C>0$ such that
 \begin{equation}
 \label{C}
 \left| \sum_{{0\leq |\beta|\leq N}\atop{\beta\gamma=\alpha}}\frac{a_\beta b_\gamma}{b_\alpha} 
\right|<C
\end{equation}
for any $N>N_0$ and  any $\alpha \in \FF_n^+$ with $|\alpha|\geq N_0$.
Due to Lemma \ref{lem1}, we have 
 $a_{g_0}=1$ and $a_{g_i}=-b_{g_i}<0$ for $i\in \{1,\ldots, n\}$.
Given $\alpha, \beta\in \FF_n^+$,  we  say that $\beta\leq\alpha$   if $\alpha=\beta\gamma$ for some $\gamma\in \FF_n^+$. In this case, we set $\alpha\backslash \beta:=\gamma$.
According to relation \eqref{sum1}, we have 
$
\sum_{ \beta\in \FF_n^+, \beta\leq\alpha}a_\beta b_{\alpha\backslash \beta}=0$
  if  $ |\alpha|\geq 1.
$
Consequently,
\begin{equation}\label{sums}
\sum_{ \beta\in \FF_n^+, \beta\leq\alpha, |\beta|\leq N}a_\beta b_{\alpha\backslash \beta} +
\sum_{ \beta\in \FF_n^+, \beta\leq\alpha, |\beta|>N}a_\beta b_{\alpha\backslash \beta}=0.
\end{equation}
Since  $a_\beta\leq 0$  for every $\beta\in \FF_n^+$ with $|\beta|\geq N>N_0$, the second sum in the relation above is negative.  Now, relation \eqref{sums} implies 
$
\sum_{ \beta\in \FF_n^+, \beta\leq\alpha, |\beta|\leq N}a_\beta b_{\alpha\backslash \beta}\geq 0
$
for any  $\alpha\in \FF_n^+$ with $|\alpha|>N_0$ and $N>N_0$. Using the later relation and the fact that
$\sum_{ \beta\in \FF_n^+, \beta\leq\alpha, |\beta|> N_0}\frac{a_\beta b_{\alpha\backslash \beta}}{b_\alpha}\leq 0,
$
we deduce that
\begin{equation*}
\begin{split}
\left| \sum_{ \beta\in \FF_n^+, \beta\leq\alpha, |\beta|\leq N}\frac{a_\beta b_{\alpha\backslash \beta}}{b_\alpha} 
\right| &\leq 
\sum_{ \beta\in \FF_n^+, \beta\leq\alpha, |\beta|\leq N_0}\frac{a_\beta b_{\alpha\backslash \beta}}{b_\alpha}\\
&\leq  \left(\sup_{{\beta\in \FF_n^+, \beta\leq\alpha, |\beta|\leq N_0}\atop{\alpha\in \FF_n^+}}
\frac{b_{\alpha\backslash \beta}}{b_\alpha}\right)
 \left(\sum_{ \beta\in \FF_n^+, |\beta|\leq N_0}|a_\beta|\right).
\end{split}
\end{equation*}
On the other hand, since $\sup_{\alpha\in \FF_n^+} \frac{b_\alpha}{b_{g_i \alpha}}<\infty$  for   $i\in \{1,\ldots, n\}$, we obtain
$\sup_{{\beta\in \FF_n^+, \beta\leq\alpha, |\beta|\leq N_0}\atop{\alpha\in \FF_n^+}}
\frac{b_{\alpha\backslash \beta}}{b_\alpha}<\infty.
$
Indeed,  let $\beta=g_{i_1}\cdots g_{i_k}\in \FF_n^+$  be such that $k\leq N_0$ and let $\alpha=\beta\gamma$, where  $\gamma\in \FF_n^+$.  Let $M>0$ be such that $\sup _{\alpha\in \FF_n^+} \frac{b_\alpha}{b_{g_i \alpha}}<M$ for any $i\in \{1,\ldots, n\}$. Note that 
$$
\frac{b_{\alpha\backslash \beta}}{b_\alpha} =\frac{b_{\gamma}}{b_{g_{i_k}\gamma}}
\frac{b_{g_{i_k}\gamma}}{b_{g_{i_{k-1}}g_{i_k}\gamma}}\cdots
\frac{b_{g_{i_2}\cdots g_{i+k}\gamma}}{b_{g_{i_1}\cdots g_{i+k}\gamma}}<M^{|\beta|}
$$
for any  $\gamma\in \FF_n^+$ and  any $\beta\in \FF_n^+$ with $|\beta|\leq N_0$.
Consequently, $\sup_{{\beta\in \FF_n^+, \beta\leq\alpha, |\beta|\leq N_0}\atop{\alpha\in \FF_n^+}}
\frac{b_{\alpha\backslash \beta}}{b_\alpha}<\infty$ and relation \eqref{C} holds.
Using Proposition \ref{abb}, we  deduce that $g$ is an admissible free holomorphic function.
The case when  $a_\alpha\geq 0$    for every $\alpha\in \FF_n^+$ with $|\alpha|\geq N_0$, can be treated in a similar manner. The proof is complete.
 \end{proof}

\begin{example} \label{Bergman-cont} For each $s\in (0,\infty)$, consider  the formal power series
$$
g_s=1+\sum_{k=1}^\infty\left(\begin{matrix} s+k-1 \\k \end{matrix}\right)(Z_1+\cdots +Z_n)^k.
$$ 
and the  associated Hilbert  space $F^2(g_s)$ (see Example \ref{Bergman}).  In this case, we have
$$
g_s^{-1}=\left[1-(Z_1+\cdots +Z_n)\right]^s=\sum_{k=0}^\infty (-1)^k\left(\begin{matrix} s \\k \end{matrix}\right)(Z_1+\cdots + Z_n)^k.
$$
Therefore 
$$a_\alpha=(-1)^{|\alpha|} \left(\begin{matrix} s \\|\alpha| \end{matrix}\right)=(-1)^{|\alpha|}\frac{s(s-1)\cdots (s-|\alpha|+1)}{|\alpha| !},\qquad \alpha\in \FF_n^+, |\alpha|\geq 1.
$$
Note that there is $N_s\geq 2$ such that $a_\alpha\geq 0$ $($or $a_\alpha\leq 0)$ for every $\alpha\in \FF_n^+$ with $|\alpha|>N_s$.
Applying Theorem \ref{general}, we deduce that $g_s$ is an admissible free holomorphic function.
\end{example}

\begin{proposition}  Let  $s\in [1,\infty)$ and let $\varphi=\sum_{|\alpha|\geq 1} d_\alpha Z_\alpha$ be a formal power series such that $d_\alpha \geq 0$ and $d_{g_i}>0$ for $i\in \{1,\ldots, n\}$.  Consider   the  formal power series $\psi_s$  with inverse 
$$\psi_s^{-1}=(1-\varphi)^s:= \sum_{k=0}^\infty (-1)^k\left(\begin{matrix} s \\k \end{matrix}\right) \varphi^k=1+\sum_{|\alpha|\geq 1}a_\alpha  Z_\alpha
$$
and let $\psi_s=1+\sum_{|\alpha|\geq 1}b_\alpha  Z_\alpha$. Then  the coefficients $\{b_\alpha\}$ satisfy the inequalities
$
\frac{b_\alpha}{b_{g_i\alpha}}\leq \frac{1}{d_{g_i}} $ and $ \frac{b_\alpha}{b_{\alpha g_i}}\leq \frac{1}{d_{g_i}}
$
for any $\alpha\in \FF_n^+$ and $i\in \{1,\ldots, n\}$.
Moreover,  if $\varphi$  is a noncommutative polynomial, then 
 $\psi_s$ is an admissible free holomorphic function for every  $s\geq 1$.
\begin{proof}
Note that 
\begin{equation*}
\begin{split}
\psi_s &= 1+\sum_{k=1}^\infty\left(\begin{matrix} s+k-1 \\k \end{matrix}\right) \varphi^k
=1+\sum_{\alpha\in \FF_n^+, |\alpha|\geq 1}\left(\sum_{j=1}^{|\alpha|} \left(\begin{matrix} s+j-1 \\j \end{matrix}\right)
\sum_{{\gamma_1\cdots \gamma_j=\alpha }\atop {|\gamma_1|\geq 1,\ldots,
 |\gamma_j|\geq 1}} d_{\gamma_1}\cdots d_{\gamma_j} \right)  
  Z_\alpha.
\end{split}
\end{equation*}
Hence,  we deduce that
\begin{equation}\label{b_alpha}
b_{g_0}=1 \quad \text{ and }\quad b_\alpha= \sum_{j=1}^{|\alpha|}  \left(\begin{matrix} s+j-1 \\j \end{matrix}\right)
\sum_{{\gamma_1\cdots \gamma_j=\alpha }\atop {|\gamma_1|\geq
1,\ldots, |\gamma_j|\geq 1}} d_{\gamma_1}\cdots d_{\gamma_j}   \quad
\text{ if } \ |\alpha|\geq 1.
\end{equation}
Assume that $s\geq 1$ and note that $ \left(\begin{matrix} s+j \\j+1\end{matrix}\right)\geq  \left(\begin{matrix} s+j-1 \\j \end{matrix}\right)$ for any  $j\geq 1$. Using relation \eqref{b_alpha}, for any  $\alpha\in \FF_n^+$ with $|\alpha|\geq 1$,  we obtain
\begin{equation*}
\begin{split}
b_{g_i\alpha}&= \sum_{t=1}^{|\alpha|+1}  \left(\begin{matrix} s+t-1 \\t\end{matrix}\right)
\sum_{{\gamma_1\cdots \gamma_t=g_i\alpha }\atop {|\gamma_1|\geq
1,\ldots, |\gamma_t|\geq 1}} d_{\gamma_1}\cdots d_{\gamma_t}
\geq 
\sum_{t=2}^{|\alpha|+1}  \left(\begin{matrix} s+t-1 \\t\end{matrix}\right)
\sum_{{g_i\gamma_2\cdots \gamma_t=g_i\alpha }\atop {|\gamma_2|\geq
1,\ldots, |\gamma_t|\geq 1}} d_{g_i}d_{\gamma_2}\cdots d_{\gamma_t}\\
&=
\sum_{j=1}^{|\alpha|}  \left(\begin{matrix} s+j \\j+1\end{matrix}\right)
\sum_{{g_i\gamma_1\cdots \gamma_j=g_i\alpha }\atop {|\gamma_1|\geq
1,\ldots, |\gamma_j|\geq 1}} d_{g_i}d_{\gamma_1}\cdots d_{\gamma_j}
=
d_{g_i} \left(\sum_{j=1}^{|\alpha|}  \left(\begin{matrix} s+j \\j+1\end{matrix}\right)
\sum_{{\gamma_1\cdots \gamma_j=\alpha }\atop {|\gamma_1|\geq
1,\ldots, |\gamma_j|\geq 1}} d_{\gamma_1}\cdots d_{\gamma_j}\right)\\
&= d_{g_i} b_\alpha.
\end{split}
\end{equation*}
 Note also that $b_{g_i}= s d_{g_i}$ for every $i\in \{1,\ldots, n\}$. Consequently, we have
 $\frac{b_\alpha}{b_{g_i\alpha}}\leq \frac{1}{d_{g_i}}$ for any $\alpha\in \FF_n^+$.
We remark that, we can show in a similar manner  that $\frac{b_\alpha}{b_{\alpha g_i}}\leq \frac{1}{d_{g_i}}$ for any $\alpha\in \FF_n^+$.

Now, assume that $\varphi$  is a noncommutative polynomial. Note that there is $N_s\geq 2$ such that $a_\alpha\geq 0$ $($or $a_\alpha\leq 0)$ for every $\alpha\in \FF_n^+$ with $|\alpha|>N_s$.  
  Theorem \ref{general} shows that      $\psi_s$ is an admissible free holomorphic function whenever $s\geq 1$.  The proof is complete.
\end{proof}
\end{proposition}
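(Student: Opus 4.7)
My plan is to first derive an explicit closed formula for the coefficients $b_\alpha$ and then extract the ratio bounds by restricting to carefully chosen decompositions. Expanding $\psi_s = (1-\varphi)^{-s}$ by the generalized binomial series gives $\psi_s = 1 + \sum_{k=1}^\infty \binom{s+k-1}{k}\varphi^k$, and matching the coefficient of $Z_\alpha$ on both sides yields
\begin{equation*}
b_\alpha = \sum_{j=1}^{|\alpha|} \binom{s+j-1}{j}\sum_{\substack{\gamma_1\cdots \gamma_j = \alpha \\ |\gamma_1|,\ldots,|\gamma_j|\geq 1}} d_{\gamma_1}\cdots d_{\gamma_j}, \qquad |\alpha|\geq 1.
\end{equation*}
Since $s\geq 1>0$ and $d_{g_i}>0$ for every $i$, the decomposition of $\alpha$ into its letters already contributes a strictly positive term, so $b_\alpha>0$ for every $\alpha$.

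For the bound $b_{g_i\alpha}\geq d_{g_i}\,b_\alpha$, I will drop from the sum defining $b_{g_i\alpha}$ every decomposition $\gamma_1\cdots\gamma_t = g_i\alpha$ except those with $\gamma_1 = g_i$. Writing $t=j+1$ and $\eta_k := \gamma_{k+1}$, this yields
\begin{equation*}
b_{g_i\alpha} \;\geq\; d_{g_i}\sum_{j=1}^{|\alpha|}\binom{s+j}{j+1}\sum_{\substack{\eta_1\cdots\eta_j = \alpha \\ |\eta_1|,\ldots,|\eta_j|\geq 1}} d_{\eta_1}\cdots d_{\eta_j}.
\end{equation*}
The elementary identity $\binom{s+j}{j+1}/\binom{s+j-1}{j} = (s+j)/(j+1) \geq 1$, valid precisely when $s\geq 1$, shows that the inner sum dominates $b_\alpha$ term by term; this is where the hypothesis $s\geq 1$ is essential. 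The mirror estimate $b_{\alpha g_i}\geq d_{g_i}b_\alpha$ follows from the symmetric argument of restricting to decompositions whose last block equals $g_i$.

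For the admissibility assertion when $\varphi$ is a noncommutative polynomial, I will invoke Theorem~\ref{general}. Condition $(a)$ is immediate from the ratio bound just established. Moreover, when $\varphi$ is a polynomial we have $\|\varphi(X)\|<1$ on some ball $[B(\cH)^n]_\rho$, whence $(1-\varphi(X))^{-s}$ is norm-convergent there and $\psi_s$ is free holomorphic in a neighborhood of the origin. The substantive point is condition $(b)$, the eventual sign stability of the coefficients $a_\alpha$ of $\psi_s^{-1} = \sum_{k\geq 0}(-1)^k\binom{s}{k}\varphi^k$. Setting $m := \lfloor s\rfloor$, a direct inspection of $\binom{s}{k} = s(s-1)\cdots(s-k+1)/k!$ shows that exactly $k-m-1$ of the numerator factors are negative whenever $k\geq m+1$, so $(-1)^k\binom{s}{k}$ carries the constant sign $(-1)^{m+1}$ for all such $k$ (and vanishes when $s$ is itself an integer and $k>s$).

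The final step, and the main obstacle, is to transfer this sign control from the scalar coefficients $(-1)^k\binom{s}{k}$ to the noncommutative coefficients $a_\alpha$, and this is precisely where the polynomial hypothesis is used. Since $\varphi$ has terms only in degrees between $1$ and $\deg\varphi$, the product $\varphi^k$ contributes only to those $a_\alpha$ with $k\leq |\alpha|\leq k\deg\varphi$, so each $a_\alpha$ is a finite sum. Choosing $N_0 := (m+1)\deg\varphi$ forces every contributing $k$ to satisfy $k\geq m+1$; because $\varphi^k$ has non-negative coefficients (as each $d_\gamma\geq 0$), $a_\alpha$ inherits the fixed sign $(-1)^{m+1}$ for all $|\alpha|\geq N_0$. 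Theorem~\ref{general} then yields the admissibility of $\psi_s$. Without the polynomial hypothesis on $\varphi$, a given $a_\alpha$ could absorb contributions from arbitrarily large $k$ with varying signs, and this sign-stability argument would collapse.
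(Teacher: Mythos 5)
Your proof is correct and follows essentially the same route as the paper: the same coefficient formula for $b_\alpha$, the same restriction of the sum for $b_{g_i\alpha}$ (resp. $b_{\alpha g_i}$) to decompositions whose first (resp. last) block is $g_i$ combined with the inequality $\binom{s+j}{j+1}\geq\binom{s+j-1}{j}$ for $s\geq 1$, and the same appeal to Theorem \ref{general} for admissibility in the polynomial case. Your explicit verification of the eventual sign constancy of the $a_\alpha$ (via $N_0=(\lfloor s\rfloor+1)\deg\varphi$ and the sign of $(-1)^k\binom{s}{k}$) supplies a detail that the paper merely asserts.
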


\section{$C^*$-algebras associated with noncommutative domains and classification}

We associate with each universal model ${\bf W}=(W_1,\ldots, W_n)$ the $C^*$-algebra  $C^*({\bf W})$ generated by $W_1,\ldots, W_n$ and identity. The main goal of this section is to obtain  Wold decompositions for any unital $*$-representations of $C^*({\bf W})$. Under certain conditions on ${\bf W}$, we obtain some classification results and  an exact sequence of $C^*$-algebras, generalizing the corresponding results obtained by Coburn and Cuntz. All the results of this section hold, in particular,  for  the admissible free holomorphic functions provided by   Example \ref{scale}.

\begin{proposition}\label{norm-conv}  Let $g= 1+\sum_{ |\alpha|\geq 1} b_\alpha
Z_\alpha$ be a free holomorphic function in a neighborhood of the origin such that  $b_\alpha>0$ and 
$\sup_{\alpha\in \FF_n^+} \frac{b_\alpha}{b_{g_i \alpha}}<\infty$ for each $i\in \{1,\ldots, n\}$.
 If $g^{-1}=1+\sum_{ |\gamma|\geq 1} a_\gamma
Z_\gamma$, then the following statements are equivalent.
\begin{enumerate}
\item[(i)] The series $\sum_{k=0}^\infty\sum_{|\alpha|=k} a_\alpha W_\alpha W_\alpha^*$ converges in the operator norm topology, where  $(W_1,\ldots, W_n)$ are  the  weighted left creation operators associated with $g$.

\item[(ii)]  If  $q,N\in \NN$ with $q\leq N$, then 
$$\lim_{{q\to\infty}\atop{N\to \infty}}\sup_{\alpha\in \FF_n^+, |\alpha|\geq q} 
\left|  \sum\limits_{{\beta\gamma=\alpha}\atop { q\leq |\beta|\leq N}}
\frac{a_\beta b_\gamma}{b_\alpha}
 \right|=0.
 $$
\end{enumerate}

\end{proposition}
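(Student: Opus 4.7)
My plan is to reduce the proposition to the Cauchy criterion for the operator-norm convergence of the partial sums
\[
S_N := \sum_{0 \leq |\beta| \leq N} a_\beta W_\beta W_\beta^*,
\]
and then exploit the diagonal structure of these operators. Recall from the proof of Proposition~\ref{W} (also recorded in Proposition~\ref{abb}) that each $S_N$ is diagonal with respect to the orthonormal basis $\{e_\alpha\}_{\alpha \in \FF_n^+}$ of $F^2(H_n)$, with
\[
S_N\, e_{g_0} = e_{g_0}, \qquad S_N\, e_\alpha = \Bigl(1 + \sum_{\substack{\beta\gamma=\alpha\\ 1 \leq |\beta| \leq N}} \tfrac{a_\beta b_\gamma}{b_\alpha}\Bigr) e_\alpha \quad \text{if } |\alpha| \geq 1.
\]

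The key step is then to compute, for any $1 \leq q \leq N$, the operator $S_N - S_{q-1} = \sum_{q \leq |\beta| \leq N} a_\beta W_\beta W_\beta^*$. This is again diagonal, annihilating $e_{g_0}$ and acting on $e_\alpha$ (for $|\alpha| \geq 1$) by multiplication by the scalar $\sum_{\beta\gamma = \alpha,\, q \leq |\beta| \leq N} \tfrac{a_\beta b_\gamma}{b_\alpha}$. Since any decomposition $\beta\gamma = \alpha$ requires $|\beta| \leq |\alpha|$, this scalar is automatically $0$ whenever $|\alpha| < q$. Because the operator norm of a diagonal operator equals the supremum of the absolute values of its diagonal entries, we obtain the identity
\[
\bigl\|S_N - S_{q-1}\bigr\| = \sup_{\substack{\alpha \in \FF_n^+\\ |\alpha| \geq q}} \Bigl|\sum_{\substack{\beta\gamma=\alpha\\ q \leq |\beta| \leq N}} \tfrac{a_\beta b_\gamma}{b_\alpha}\Bigr|.
\]

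The equivalence of (i) and (ii) is now immediate. Indeed, the series in (i) converges in the operator norm topology if and only if the sequence $\{S_N\}$ is Cauchy in norm, which by the displayed identity is precisely the condition in (ii). There is no real obstacle beyond bookkeeping; the only points requiring care are handling the boundary cases $\alpha = g_0$ and $|\alpha| < q$ correctly, and recognizing that the Cauchy condition for a norm-convergent series can be expressed as a joint limit as $q, N \to \infty$ with $q \leq N$, which is exactly the form appearing in (ii).
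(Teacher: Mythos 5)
Your proof is correct and follows essentially the same route as the paper: both observe that the tail sums $\sum_{q\leq|\beta|\leq N} a_\beta W_\beta W_\beta^*$ are diagonal with respect to $\{e_\alpha\}$, identify their operator norm as the supremum over $|\alpha|\geq q$ of the absolute values of the diagonal entries $\sum_{\beta\gamma=\alpha,\, q\leq|\beta|\leq N} a_\beta b_\gamma/b_\alpha$, and then invoke the Cauchy criterion for norm convergence. Your write-up simply makes explicit the bookkeeping (the cases $\alpha=g_0$ and $|\alpha|<q$) that the paper leaves to the reader.
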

\begin{proof} Since 
$
W_\beta W_\beta^* e_\alpha =\begin{cases}
\frac {{b_\gamma}}{{b_{\alpha}}} e_\alpha & \text{ if } \alpha=\beta\gamma\\
0& \text{ otherwise}
\end{cases}
$,
the operator  $\sum_{q\leq|\beta|\leq N} a_\beta W_\beta  W_\beta^*$ is diagonal and 
$$
\left\|\sum_{q\leq|\beta|\leq N} a_\beta W_\beta  W_\beta^*\right\|=\sup_{\alpha\in \FF_n^+, |\alpha|\geq q} 
\left|  \sum\limits_{{\beta\gamma=\alpha}\atop { q\leq |\beta|\leq N}}
\frac{a_\beta b_\gamma}{b_\alpha}
 \right|
$$
for any  $q,N\in \NN$ with $q\leq N$.
Now, one can easily complete the proof.
\end{proof}

Note that the conditions in Proposition \ref{norm-conv}  are satisfied, in particular,   when $g^{-1}$ is a polynomial.

\begin{proposition} \label{Ex}  Let $g= 1+\sum_{ |\alpha|\geq 1} b_\alpha
Z_\alpha$ be a free holomorphic function in a neighborhood of the origin such that 
$\sup_{\gamma, \beta}\frac{b_\gamma}{b_{\beta \gamma}}<\infty$.
 If $g^{-1}=1+\sum_{ |\gamma|\geq 1} a_\gamma
Z_\gamma$ 
and 
$
\sum_{j=1}^\infty \max\{|a_\alpha|:\  |\alpha|=j\}
$
is convergent, then  the series $\sum_{\alpha\in \FF_n^+} a_\alpha W_\alpha W_\alpha^*$ converges in the operator norm topology, where   $(W_1,\ldots, W_n)$ are  the corresponding  weighted left creation operators associated with $g$.   In particular, the result holds  if $g^{-1}$ is a polynomial.
\end{proposition}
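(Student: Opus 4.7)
The plan is to reduce everything to Proposition \ref{norm-conv} and verify its condition (ii), namely that
$$
\lim_{{q\to\infty}\atop{N\to \infty}}\sup_{\alpha\in \FF_n^+, |\alpha|\geq q}
\left|  \sum\limits_{{\beta\gamma=\alpha}\atop { q\leq |\beta|\leq N}}
\frac{a_\beta b_\gamma}{b_\alpha}
 \right|=0.
$$
The key structural observation I would use is that the words in $\FF_n^+$ factor uniquely by position: for any $\alpha\in \FF_n^+$ and any $j\in\{0,1,\ldots,|\alpha|\}$ there is a \emph{unique} decomposition $\alpha=\beta^{(j)}\gamma^{(j)}$ with $|\beta^{(j)}|=j$, obtained by splitting $\alpha$ after its first $j$ letters. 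Therefore, for each fixed $\alpha$ with $|\alpha|\geq q$,
$$
\sum_{{\beta\gamma=\alpha}\atop { q\leq |\beta|\leq N}}\frac{a_\beta b_\gamma}{b_\alpha}
=\sum_{j=q}^{\min\{N,|\alpha|\}}\frac{a_{\beta^{(j)}} b_{\gamma^{(j)}}}{b_{\beta^{(j)}\gamma^{(j)}}}.
$$

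The next step is to apply the triangle inequality and exploit the two hypotheses. Setting $M:=\sup_{\gamma,\beta}\frac{b_\gamma}{b_{\beta\gamma}}<\infty$ and $c_j:=\max\{|a_\mu|:\mu\in\FF_n^+,\ |\mu|=j\}$, I would estimate
$$
\left|\sum_{j=q}^{\min\{N,|\alpha|\}}\frac{a_{\beta^{(j)}} b_{\gamma^{(j)}}}{b_{\beta^{(j)}\gamma^{(j)}}}\right|
\leq \sum_{j=q}^{N}|a_{\beta^{(j)}}|\,\frac{b_{\gamma^{(j)}}}{b_{\beta^{(j)}\gamma^{(j)}}}
\leq M\sum_{j=q}^{N} c_j.
$$
This bound is \emph{uniform} in $\alpha$, so
$$
\sup_{\alpha\in \FF_n^+, |\alpha|\geq q}
\left|  \sum_{{\beta\gamma=\alpha}\atop { q\leq |\beta|\leq N}}\frac{a_\beta b_\gamma}{b_\alpha} \right|
\leq M\sum_{j=q}^{N}c_j.
$$
By assumption $\sum_{j=1}^\infty c_j<\infty$, so the tails $\sum_{j=q}^{N}c_j$ tend to $0$ as $q,N\to\infty$. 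This verifies condition (ii) of Proposition \ref{norm-conv} and hence yields the desired operator-norm convergence.

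For the last sentence, if $g^{-1}$ is a noncommutative polynomial of degree $d$, then $a_\gamma=0$ for $|\gamma|>d$; consequently $c_j=0$ for $j>d$, so the series $\sum_{j\geq 1} c_j$ is trivially convergent and the main statement applies. There is no substantive obstacle in this argument: the only nontrivial ingredient is the unique-factorization observation that collapses the sum over decompositions $\beta\gamma=\alpha$ into a sum indexed by the single integer $|\beta|$, after which the two standing hypotheses give a uniform Cauchy-tail estimate essentially for free.
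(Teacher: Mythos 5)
Your argument is correct and is essentially identical to the paper's proof: both exploit the unique length-$j$ prefix decomposition of each word $\alpha$ to bound the sum over $\beta\gamma=\alpha$ with $q\leq|\beta|\leq N$ by $M\sum_{j=q}^{N}\max\{|a_\mu|:|\mu|=j\}$, and then invoke Proposition \ref{norm-conv}. No differences worth noting.
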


\begin{proof} Fix $q,N\in \NN$ with $q\leq N$. For each $\alpha\in \FF_n^+$ with $|\alpha|\geq q$ and each
$p\in \NN$ such that $q\leq p\leq N$, there is at most one  word $\beta\in \FF_n^+$ such that  $|\beta|=p$ and $\beta\gamma=\alpha$. Consequently,  setting 
$M:=\sup_{\gamma, \beta}\frac{b_\gamma}{b_{\beta \gamma}}<\infty$, we have
$$
\left|  \sum\limits_{{\beta\gamma=\alpha}\atop { q\leq |\beta|\leq N}}
\frac{a_\beta b_\gamma}{b_\alpha}
 \right|
 \leq M\sum_{j=q}^N \max\{|a_\alpha|:\  |\alpha|=j\}
$$
for any  $\alpha\in \FF_n^+$ with $|\alpha|\geq q$.   Hence, we deduce that
$$\sup_{\alpha\in \FF_n^+, |\alpha|\geq q} 
\left|  \sum\limits_{{\beta\gamma=\alpha}\atop { q\leq |\beta|\leq N}}
\frac{a_\beta b_\gamma}{b_\alpha}
 \right|
\leq  M\sum_{j=q}^N \max\{|a_\alpha|:\  |\alpha|=j\}.
$$
Since the series $
\sum_{j=1}^\infty \max\{|a_\alpha|:\  |\alpha|=j\}
$
is convergent, we use  Proposition \ref{norm-conv} to complete the proof.
\end{proof}

\begin{example} \label{scale}   For each $s\in [1,\infty)$, consider  the formal power series
$$
g_s:=1+\sum_{k=1}^\infty\left(\begin{matrix} s+k-1 \\k \end{matrix}\right)(Z_1+\cdots +Z_n)^k.
$$ 
If $\alpha\in \FF_n^+$ and $|\alpha|=k$, then $b_k:=b_\alpha=\left(\begin{matrix} s+k-1 \\k \end{matrix}\right)$ and $\frac{b_k}{b_{k+1}}=\frac{k+1}{s+k}\leq 1$. Consequently,  we obtain $\sup_{\gamma, \beta}\frac{b_\gamma}{b_{\beta \gamma}}\leq 1$. On the other hand, 
since 
$$
g_s^{-1}=\left[1-(Z_1+\cdots +Z_n)\right]^s=\sum_{k=0}^\infty (-1)^k\left(\begin{matrix} s \\k \end{matrix}\right)(Z_1+\cdots + Z_n)^k,
$$
we have $a_\alpha=(-1)^k\left(\begin{matrix} s \\k \end{matrix}\right)$ if $|\alpha|=k$.  Since 
$
\sum_{k=0}^\infty \left|\left(\begin{matrix} s \\k \end{matrix}\right)\right|<\infty \text{ for any} \ s>0,
$
 we can apply Proposition \ref{Ex}, 
  to deduce that  the series $\sum_{\alpha\in \FF_n^+} a_\alpha W_\alpha W_\alpha^*$ converges in the operator norm topology.
\end{example}

We denote by $C^*({\bf W})$ the $C^*$-algebra generated by $W_1,\ldots W_n$ and the  identity. 
We also consider the lexicographic order on the unital free semigroup   $\FF_n^+$.

\begin{theorem}  \label{irreducible} Let $g= 1+\sum_{ |\alpha|\geq 1} b_\alpha
Z_\alpha$ be a free holomorphic function in a neighborhood of the origin such that  $b_\alpha>0$ and 
$
\sup_{\alpha\in \FF_n^+} \frac{b_\alpha}{b_{g_i \alpha}}<\infty$ for every  $i\in \{1,\ldots, n\},
$ 
and 
let   ${\bf W}=(W_1,\ldots, W_n)$ be  the   weighted left creation operators associated with $g$.
Then  the following statements hold.
\begin{enumerate}
\item[(i)] The $C^*$-algebra $C^*({\bf W})$ is irreducible.
\item[(ii)] The $n$-tuples $(W_1\otimes I_\cH,\ldots,
W_n\otimes I_\cH)$ and  $(W_1\otimes I_{\cH'},\ldots, W_n\otimes
I_{\cH'})$ 
  are unitarily equivalent if and only if   $\dim \cH=\dim \cH'$.
\item[(iii)] If  there is $i\in \{1,\ldots, n\}$ such that
$$
\lim_{\gamma\to \infty} \left(\frac{b_{g_p\gamma}}{b_{g_ig_p\gamma}}-\frac{b_\gamma}{b_{g_p\gamma}}\right) =0\quad  \text{for any} \quad  p\in \{1,\ldots, n\},
$$
then  the $C^*$-algebra $C^*({\bf W})$ contains all the compact operators in $B(F^2(H_n))$.  \end{enumerate}
\end{theorem}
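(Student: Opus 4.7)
The plan is to treat the three items essentially independently, exploiting the explicit formulas \eqref{WbWb} for the actions of $W_i$ and $W_i^*$ on the basis $\{e_\alpha\}$. The common thread across all three items is the identity $W_i^*(1) = 0$ together with $W_i^* e_{g_j\gamma} = \delta_{ij}\sqrt{b_\gamma/b_{g_j\gamma}}\,e_\gamma$.

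First I would record the preliminary fact that $\bigcap_{i=1}^n \ker W_i^* = \CC 1$: writing $f=\sum_\alpha c_\alpha e_\alpha$, the requirement $W_i^* f = 0$ for every $i$ forces $c_{g_i\gamma}=0$ for all $i$ and all $\gamma$, leaving only $c_{g_0}$. For item~(i), any $T$ in the commutant of $C^*({\bf W})$ satisfies $W_i^* T(1) = T W_i^*(1) = 0$ for every $i$, so $T(1) = c\cdot 1$ for some scalar $c$, and then $Te_\alpha = \sqrt{b_\alpha}\,TW_\alpha(1) = \sqrt{b_\alpha}\,W_\alpha T(1) = c\, e_\alpha$ gives $T=cI$, which is irreducibility. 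For item~(ii), the ``if'' direction is trivial. For ``only if'', a unitary $V$ intertwining $(W_i\otimes I_\cH)$ with $(W_i\otimes I_{\cH'})$ also intertwines $W_i^*\otimes I_\cH$ with $W_i^*\otimes I_{\cH'}$ (by taking adjoints and conjugating by $V$), so it maps $\bigcap_i\ker(W_i^*\otimes I_\cH)=\CC 1\otimes\cH$ unitarily onto $\CC 1\otimes\cH'$, forcing $\dim\cH=\dim\cH'$.

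The real content is in~(iii). The approach I would take is to exhibit a single nonzero compact operator inside $C^*({\bf W})$; once that is done, the classical fact that an irreducible $C^*$-subalgebra of $B(F^2(H_n))$ containing a nonzero compact already contains every compact (with irreducibility supplied by (i)) finishes the proof. The candidate is
$$
A := W_i^*W_i - \sum_{p=1}^n W_p W_p^* \in C^*({\bf W}),
$$
which, by \eqref{WbWb}, is diagonal in $\{e_\alpha\}$ with $A(1)= \frac{1}{b_{g_i}}\cdot 1$ and, for $\alpha=g_p\gamma$ where $p$ is the first letter of $\alpha$,
$$
A e_\alpha = \left(\frac{b_{g_p\gamma}}{b_{g_i g_p\gamma}} - \frac{b_\gamma}{b_{g_p\gamma}}\right) e_{g_p\gamma}.
$$
The hypothesis in (iii) is precisely the statement that these eigenvalues tend to $0$ as $|\alpha|\to\infty$, so $A$ is compact; and $A(1)\neq 0$ shows $A\neq 0$.

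The main obstacle is really just guessing the operator $A$; once $A$ is written down, the assumed limit condition is tailored exactly to make $A$ compact, and all other steps reduce to an elementary diagonal-operator computation or to the simple observation $\bigcap_i\ker W_i^* = \CC 1$.
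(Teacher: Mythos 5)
Your proposal is correct, and item (iii) coincides with the paper's argument: the paper uses exactly the operator $W_i^*W_i-\sum_{p=1}^n W_pW_p^*$, computes that it is diagonal with eigenvalue $\frac{b_{g_p\gamma}}{b_{g_ig_p\gamma}}-\frac{b_\gamma}{b_{g_p\gamma}}$ at $e_{g_p\gamma}$, and invokes irreducibility to absorb all compacts. Where you differ is in (i) and (ii). For (i), the paper expands $A(1)=\sum_\beta c_\beta b_\beta^{-1/2}e_\beta$, uses $AW_i=W_iA$ to get $Ae_\alpha=\sqrt{b_\alpha}\,W_\alpha A(1)$, does the same for $A^*$, and then compares the matrix entries $\left<Ae_\alpha,e_{\alpha\beta_0}\right>=\left<e_\alpha,A^*e_{\alpha\beta_0}\right>$ to kill every coefficient $c_{\beta_0}$ with $|\beta_0|\geq 1$; you short-circuit this by observing $\bigcap_i\ker W_i^*=\CC 1$ and applying $W_i^*T(1)=TW_i^*(1)=0$, which is cleaner and avoids introducing the second Fourier expansion for $A^*$. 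For (ii), the paper's "only if" direction leans on (i): irreducibility forces the intertwining unitary to have the form $I_{F^2(H_n)}\otimes A$, whence $\dim\cH=\dim\cH'$; your argument instead identifies $\bigcap_i\ker(W_i^*\otimes I_\cH)=\CC 1\otimes\cH$ and notes that the unitary must carry this subspace onto $\CC 1\otimes\cH'$, so it does not use irreducibility at all. Both routes are valid; yours makes (ii) logically independent of (i), while the paper's reuses the irreducibility machinery. The one small point worth making explicit in your write-up is the value of your candidate operator at the vacuum, $A(1)=\frac{1}{b_{g_1}}\cdots$ — more precisely $A(1)=\frac{1}{b_{g_i}}\cdot 1\neq 0$ — which you do state and which guarantees the compact operator you exhibit is nonzero.
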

\begin{proof}
Let $A\in B(F^2(H_n))$  be  commuting with each operator in $C^*({\bf W})$.
Since $A(1)\in F^2(H_n)$, we have
$A(1)=\sum_{\beta\in \FF_n^+} c_{
\beta}\frac{1}{\sqrt{b_{ \beta}}} e_{ \beta}$ for some
coefficients $\{c_\beta\}_{\FF_n^+}\subset \CC$ with $\sum_{\beta\in\FF_n^+}
|c_\beta|^2 \frac{1}{b_\beta}<\infty$. On the other hand, since $
AW_i=W_iA$ for any  $i\in \{1,\ldots,n\}$, relation \eqref{WbWb}
  implies
\begin{equation*}
\begin{split}
Ae_\alpha &=\sqrt{b_\alpha}AW_{\alpha}(1)=\sqrt{b_\alpha}W_{\alpha}
A(1)
=\sum_{\beta\in \FF_n^+} c_{ \beta}
\frac{\sqrt{b_\alpha}}{\sqrt{b_{ \alpha\beta}}} e_{
 \alpha\beta}. 
 \end{split}
\end{equation*}
Similarly, since  $A^*W_i=W_iA^*$ for  any $i\in\{1,\ldots,n\}$, there are  some scalars
 $\{c_\gamma'\}_{\gamma\in \FF_n^+}\subset \CC$ with $\sum_{\gamma\in\FF_n^+}
|c_\gamma' |^2 \frac{1}{b_\gamma}<\infty$   such that 
$$
A^* e_\sigma=\sum_{\gamma\in \FF_n^+} c_{\gamma}'
\frac{\sqrt{b_\sigma}}{\sqrt{b_{ \sigma\gamma}}} e_{
 \sigma\gamma}.
 $$
Since $\left< Ae_\alpha, e_\sigma\right>=\left< e_\alpha, A^* e_\sigma\right>$, the relations above imply
$$
\left< \sum_{\beta\in \FF_n^+} c_{ \beta}
\frac{\sqrt{b_\alpha}}{\sqrt{b_{ \alpha\beta}}} e_{
 \alpha\beta}, e_\sigma\right>=\left< e_\alpha, \sum_{\gamma\in \FF_n^+} c_{\gamma}'
\frac{\sqrt{b_\sigma}}{\sqrt{b_{ \sigma\gamma}}} e_{
 \sigma\gamma}\right>
$$
for any $\alpha,\sigma\in \FF_n^+$.
Assume that $\sigma=\alpha\beta_0$ where $\beta_0\in \FF_n^+$ and $|\beta_0|\geq 1$.
The relation above becomes
$$
c_{\beta_0}=\left< e_\alpha, \sum_{\gamma\in \FF_n^+} c_{\gamma}'
\frac{\sqrt{b_{\alpha\beta_0}}}{\sqrt{b_{ \alpha\beta_0\gamma}}} e_{
 \alpha\beta_0\gamma}\right>=0.
$$
Hence $c_{\beta_0}=0$ for any  $\beta_0\in \FF_n^+$ with $|\beta_0|\geq 1$.
Therefore, $Ae_\alpha=c_{g_0}e_\alpha$ for any $\alpha\in \FF_n^+$, thus $A=c_{g_0}I$. This shows that 
$C^*({\bf W})$ is irreducible.

 To prove  item (ii), note that 
 one implication is trivial. Assume that $U:F^2(H_n)\otimes \cH\to
F^2(H_n)\otimes \cH'$ is a unitary operator such that
$
U(W_i\otimes I_\cH)=(W_i\otimes I_{\cH'})U$ for  every $i\in\{1,\ldots, n\}$.
Then, we have
$
U(W_i^*\otimes I_\cH)=(W_i^*\otimes I_{\cH'})U$ for  $i\in \{1,\ldots, n\}$,
and due to the fact that the $C^*$-algebra $C^*({\bf W})$ is irreducible, we
must have $U=I_{F^2(H_n)}\otimes A$, where $A\in B(\cH,\cH')$ is a
unitary operator . Therefore, $\dim \cH=\dim \cH'$.
To prove item (iii), note that
\begin{equation*}
  W_iW_i^*
e_\alpha =\begin{cases} \frac
{b_\gamma}{b_{g_i\gamma}}\,e_{g_i\gamma}& \text{ if
}
\alpha=g_i\gamma, \ \, \gamma\in \FF_{n}^+ \\
0& \text{ otherwise,}
\end{cases}
\end{equation*}
and  $W_i^*W_i e_\alpha=  \frac{b_\alpha}{b_{g_i\alpha}} e_\alpha$ for all $\alpha\in \FF_n^+$. Hence, we deduce that  
  $W_i^*W_i-\sum_{j=1}^n W_jW_j^*$ is  a diagonal operator and
$$\left( W_i^*W_i-\sum_{j=1}^n W_jW_j^*\right)
e_\alpha=\left(\frac
{b_\alpha}{b_{g_i\alpha}}-\frac
{b_\gamma}{b_{g_p\gamma}}\right)\,e_{\alpha}$$
for any $\alpha\in \FF_n^+$ with $\alpha=g_p\gamma$ for some $\gamma\in \FF_n^+$ and $p\in \{1,\ldots,n \}$.
Since
$$
\lim_{\gamma\to \infty}\left(\frac{b_{g_p\gamma}}{b_{g_ig_p\gamma}}-
\frac{b_\gamma}{b_{g_p\gamma}}\right)=0,\quad \text{for any} \quad p\in \{1,\ldots, n\},
$$
it is clear that the diagonal operator $W_i^*W_i-\sum_{j=1}^n W_jW_j^*$ is compact. 
Since the $C^*$-algebra $C^*({\bf W})$ is irreducible, we conclude that it  contains all the compact operators in $B(F^2(H_n))$.
This completes the proof.
\end{proof}

We remark that Theorem \ref{irreducible} holds, in particular, for  the admissible free holomorphic functions $g$ such that $F^2(g)$ is a normalized unitarily invariant Hilbert space. In particular, it holds for  the free holomorphic functions provided by Example \ref{Bergman} and Example \ref{Dirichlet}.

\begin{theorem} \label{compact}  Let $g= 1+\sum_{ |\alpha|\geq 1} b_\alpha
Z_\alpha$ be a free holomorphic function in a neighborhood of the origin such that  $b_\alpha>0$ and 
$
\sup_{\alpha\in \FF_n^+} \frac{b_\alpha}{b_{g_i \alpha}}<\infty$   for every $i\in \{1,\ldots, n\},
$ 
and 
let  ${\bf W}=(W_1,\ldots, W_n)$ be  the  weighted left creation operators associated with $g$.
  Assume that   the series $\sum_{k=0}^\infty\sum_{|\alpha|=k} a_\alpha W_\alpha W_\alpha^*$ is convergent in the operator norm topology, where $g^{-1}=1+\sum_{|\alpha|\geq 1} a_\alpha Z_\alpha$. Then  the following statements hold.
  \begin{enumerate}
  \item[(i)]
$\boldsymbol\cK(F^2(H_n))\subset \overline{\text{\rm span}}\{W_\alpha W_\beta^*:\ \alpha,\beta\in \FF_n^+\},
$
 where  $\boldsymbol\cK(F^2(H_n))$ stands for the ideal of all compact
operators  in $B(F^2(H_n))$. 
\item[(ii)] There is a unique minimal nontrivial  two-sided ideal  $\cJ$ of $C^*({\bf W})$. Moreover, 
$\cJ$ coincides  with $\boldsymbol\cK(F^2(H_n))$.

\item[(iii)] If, in addition,
$$
\lim_{\gamma \to \infty} \left(\frac{b_{g_p\gamma}}{b_{g_ig_p\gamma}}-\frac{b_\gamma}{b_{g_p\gamma}}\right) =0
$$
for every $i,p\in \{1,\ldots, n\}$,  then 
$$
C^*({\bf W})=\overline{\text{\rm span}}\{W_\alpha W_\beta^*:\ \alpha,\beta\in \FF_n^+\}.
$$
\end{enumerate}
\end{theorem}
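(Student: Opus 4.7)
The plan is to exploit the identity $\Delta_{g^{-1}}({\bf W},{\bf W}^*)=P_{\CC 1}$ from Proposition~\ref{W}, together with the irreducibility and the asymptotic Cuntz-type relations established in Theorem~\ref{irreducible}.

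For (i), the operator-norm convergence hypothesis upgrades the SOT identity of Proposition~\ref{W} to a genuine operator-norm limit, so the rank-one projection $P_{\CC 1}$ lies in $\overline{\text{\rm span}}\{W_\alpha W_\beta^*\}$. A direct basis computation, using $W_\beta^* e_\gamma=\frac{1}{\sqrt{b_\beta}}$ if $\gamma=\beta$ and zero otherwise, and $W_\alpha(1)=\frac{1}{\sqrt{b_\alpha}}\,e_\alpha$, shows that $\sqrt{b_\alpha b_\beta}\,W_\alpha P_{\CC 1} W_\beta^*$ is the rank-one operator sending $e_\beta$ to $e_\alpha$ and vanishing on every other basis vector; these matrix units span a dense subspace of $\boldsymbol\cK(F^2(H_n))$, proving (i). Part (ii) then follows by combining (i) with Theorem~\ref{irreducible}(i): $C^*({\bf W})$ acts irreducibly on $F^2(H_n)$ and contains the nonzero compact $P_{\CC 1}$, so by the classical result on irreducible $C^*$-subalgebras of $B(\cH)$ meeting $\boldsymbol\cK(\cH)$ nontrivially (cf.\ Arveson's \emph{Invitation to $C^*$-Algebras}), it must contain all of $\boldsymbol\cK(F^2(H_n))$, which is then its unique minimal nontrivial norm-closed two-sided ideal.

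For (iii), set $\cA:=\overline{\text{\rm span}}\{W_\alpha W_\beta^*\}$ and $P:=\sum_{j=1}^n W_jW_j^*\in \cA$. By (i), $\boldsymbol\cK(F^2(H_n))\subset \cA$, and $\cA$ is a norm-closed self-adjoint subspace containing $I$, $W_i$ and $W_i^*$ for each $i$; it suffices to show that $\cA$ is closed under multiplication. The driving relations are $W_i^*W_j=0$ for $i\neq j$ (immediate from the definition of ${\bf W}$) and, under the stronger hypothesis of (iii), $W_i^*W_i\equiv P\pmod{\boldsymbol\cK}$ for every $i$, obtained by applying Theorem~\ref{irreducible}(iii) for each choice of $i$. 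An immediate consequence is $[P,W_i]=W_i(W_i^*W_i-P)\in \boldsymbol\cK$, so $P$ is central modulo compacts.

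My plan is then to reduce an arbitrary monomial $W_{i_1}^{\epsilon_1}\cdots W_{i_m}^{\epsilon_m}$ to normal form by repeatedly replacing every adjacent pair $W_i^*W_j$ by $\delta_{ij}P=\delta_{ij}\sum_k W_kW_k^*$ modulo compacts. Each replacement introduces only a compact error (by the compact ideal property) and strictly decreases the combinatorial invariant counting inversions (pairs of positions with a $*$-letter preceding a non-$*$-letter), so the rewriting terminates after finitely many steps in a finite sum of normal-form monomials $W_\alpha W_\beta^*$ together with a compact correction; since $\boldsymbol\cK\subset \cA$, every polynomial in $W_1,\ldots,W_n,W_1^*,\ldots,W_n^*$ lies in $\cA$, and norm-closure gives $C^*({\bf W})=\cA$. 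The main obstacle is the bookkeeping in (iii): confirming that each local move $W_i^*W_i\mapsto P$, once embedded in the surrounding word, produces only a compact error, and that the inversion count is a strictly decreasing invariant under the move, which together guarantee termination of the rewriting inside $\cA$.
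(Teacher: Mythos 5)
Your proposal is correct and follows essentially the same route as the paper: operator-norm convergence gives $P_{\CC 1}\in\overline{\text{\rm span}}\{W_\alpha W_\beta^*:\alpha,\beta\in\FF_n^+\}$, the rank-one operators $\sqrt{b_\alpha b_\beta}\,W_\alpha P_{\CC 1}W_\beta^*$ (matrix units) yield (i), and the reduction of $W_\beta^* W_\alpha$ to normal form modulo compacts via $W_i^*W_j=0$ for $i\neq j$ and $W_i^*W_i\equiv\sum_k W_kW_k^*\pmod{\boldsymbol\cK}$ yields (iii), with your inversion-count bookkeeping just making the paper's inductive step explicit. The only cosmetic difference is in (ii), where you invoke the standard irreducibility-plus-compacts theorem, while the paper carries out the same argument by hand, showing any nonzero ideal contains some $P_{\CC e_\alpha}$ and hence all of $\boldsymbol\cK(F^2(H_n))$.
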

\begin{proof} 
 Since $\sum_{k=0}^\infty \sum_{|\alpha|=k} a_\alpha W_\alpha W_\alpha^*$ is convergent in the operator norm topology, Proposition \ref{W} implies  
\begin{equation*}
 \sum_{k=0}^\infty\sum_{|\alpha|= k} a_\alpha W_\alpha W_\alpha^*
 =P_{\CC 1}\in  \overline{\text{\rm span}}\{W_\alpha W_\beta^*:\ \alpha,\beta\in \FF_n^+\},
\end{equation*}
where $ P_{\CC1}$ is the orthogonal projection of $F^2(H_n)$ onto
$\CC$.
For any  polynomial 
$g({\bf W}):=\sum\limits_{|\alpha|\leq m} d_\alpha W_\alpha$
 and   any $ \xi:=\sum\limits_{\beta\in \FF_n^+} c_\beta
e_\beta\in  F^2(H_n)$, we have
 $
P_{\CC1} g({\bf W})^*\xi = \left<
\xi,g({\bf W})(1)\right>
$
and, consequently,
\begin{equation}\label{rankone}
q({\bf W})P_{\CC1} g({\bf W})^*\xi= \left<
\xi,g({\bf W})(1)\right>q({\bf W})(1)
\end{equation}
for any polynomial $q({\bf W})$.
Hence, we deduce that  $q({\bf W})P_{\CC1}
g({\bf W})^*$
 Is  a rank-one  operator  which belongs to 
the operator space $\overline{\text{\rm span}}\{W_\alpha W_\beta^*:\
\alpha,\beta\in \FF_n^+\}$. Using the definition of the weighted left creation operators $W_1,\ldots, W_n$, one can see that  the   set
$$\cE:=\left\{\left(\sum\limits_{|\alpha|\leq m}d_\alpha
W_\alpha\right)(1):\ m\in \NN, d_\alpha\in \CC\right\}$$ is  dense
in $F^2(H_n)$, Consequently,    relation \eqref{rankone} shows   that all compact
operators  in $B(F^2(H_n))$ are included in the operator space
$\overline{\text{\rm span}}\{W_\alpha W_\beta^*:\ \alpha,\beta\in
\FF_n^+\}$. This poves part (i).

To prove item (ii),  let  $\cJ$ be  a nontrivial  two-sided ideal   of $C^*({\bf W})$. Then there exists 
$A \in \cJ$ such that $A e_\alpha\neq 0$ for some $\alpha\in \FF_n^+$. 
Note that
 \begin{equation*}
P_{\CC 1} W_\beta^* e_\alpha =\begin{cases}
\frac {1}{\sqrt{b_{\beta}}}   & \text{ if } \alpha=\beta\\
0& \text{ otherwise,}
\end{cases}
\end{equation*}
 implies 
$ b_\alpha W_\alpha P_{\CC 1}
W_\alpha^*= P_{\CC e_\alpha}\in \overline{\text{\rm span}}\{W_\alpha W_\beta^*:\ \alpha,\beta\in \FF_n^+\}$,
where $P_{\CC e_\alpha}$ is the orthogonal projection of  the full Fock space $F^2(H_n)$ onto the one-dimensional subspace
 $\CC e_\beta$.   This implies that
$
 P_{\CC e_\alpha}A^*A P_{\CC e_\alpha}\in \cJ.
 $
Note that, for any $x\in F^2(H_n)$, we have
\begin{equation*}
\begin{split}
 P_{\CC e_\alpha}A^*A P_{\CC e_\alpha}x
 &=\left< x, P_{\CC e_\alpha}A^*A e_\alpha\right>e_\alpha=\|A e_\alpha\|^2 P_{\CC e_\alpha} x.
 \end{split}
\end{equation*}
Since $\|A e_\alpha\|^2 P_{\CC e_\alpha}=P_{\CC e_\alpha}A^*\Gamma P_{\CC e_\alpha}$ and $A e_\alpha\neq 0$, we deduce that $P_{\CC e_\alpha}\in\cJ$. Now, it is clear  that $W_\alpha^*P_{\CC e_\alpha} W_\alpha\in \cJ$  and, consequently, 
    $b_\alpha W_\alpha^* P_{\CC e_\alpha}
W_\alpha=P_{\CC 1}\in \cJ$. As in the proof of item (i), one can show that $q({\bf W})P_{\CC 1}
g({\bf W})^*\in \cJ$
 is  a rank-one  operator  for any polynomials  $q({\bf W})$ and $g({\bf W})$. Moreover, this can be used to show that
   any rank-one operator in $B(F^2(H_n))$ is in the ideal $\cJ$. Consequently,  we deduce that
   $\boldsymbol\cK(F^2(H_n))\subset \cJ$. Since $\cJ$ is a minimal  two-sided ideal of $C^*({\bf W})$, we must have $\cJ=\boldsymbol\cK(F^2(H_n))$.

Now, we prove item (iii).
We saw in the proof of Theorem \ref{irreducible}, part (iii), that   if 
$$
\lim_{\gamma\to \infty}\left(\frac{b_{g_p\gamma}}{b_{g_ig_p\gamma}}-
\frac{b_\gamma}{b_{g_p\gamma}}\right)=0\quad  \text{for every}\quad i, p\in \{1,\ldots, n\},
$$
then $W_i^*W_i-\sum_{j=1}^n W_jW_j^*$ is a compact operator in  $B(F^2(H_n))$. 
Consequently, using the fact that   $W_i^*W_j=0$ if $i, j\in \{1,\ldots, n\}$ with $i\neq j$, we deduce that\begin{equation*}
W_i^*W_iW_{i_1}\cdots W_{i_p}\in \sum_{j=1}^nW_{i_1}\cdots W_{i_p}W_jW_j^* + \boldsymbol\cK(F^2(H_n))
\end{equation*}
  for  any 
 $i, i_1,\ldots, i_p$ in $\{1,\ldots, n\}$. 
Inductively, one can easily prove that
$$
W_\beta^* W_\alpha \in \text{\rm span}\left\{W_\gamma W_\sigma^*:\ \gamma,\sigma\in \FF_n^+ \text{ with } |\gamma|\leq |\alpha|, |\sigma|\leq |\beta|\right\} +\boldsymbol\cK(F^2(H_n))
$$
 for  any $\alpha, \beta\in \FF_n^+$.
 On the other hand, according to the first part of the theorem, we have
$$
\boldsymbol\cK(F^2(H_n))\subset \overline{\text{\rm span}}\left\{W_\alpha W_\beta^*:\ \alpha,\beta\in \FF_n^+ \right\}.
$$
Combining these results, we deduce that 
$C^*({\bf W})= \overline{\text{\rm span}}\{W_\alpha W_\beta^*:\ \alpha,\beta\in \FF_n^+\}.
$
 This completes the proof.
\end{proof}
We remark that Theorem \ref{compact} holds, in particular, for  the admissible free holomorphic functions of Example \ref{scale}.
Here is  a $C^*$-algebra  version  of the Wold decomposition for  the unital 
$*$-representations  of  the $C^*$-algebra $C^*({\bf W})$. 

\begin{theorem}  \label{wold} Let $g$ be an admissible free holomorphic function  and 
let  ${\bf W}=(W_1,\ldots, W_n)$ be  the  weighted left creation operators associated with $g$.
  If the series $\sum_{k=0}^\infty\sum_{|\alpha|=k} a_\alpha W_\alpha W_\alpha^*$ is convergent in the operator norm topology, where $g^{-1}=1+\sum_{|\alpha|\geq 1} a_\alpha Z_\alpha$, and   
$\pi:C^*({\bf W})\to B(\cK)$ is  a unital
$*$-representation  of $C^*({\bf W})$ on a separable Hilbert
space  $\cK$, then $\pi$ decomposes into a direct sum
$$
\pi=\pi_0\oplus \pi_1 \  \text{ on  } \ \cK=\cK_0\oplus \cK_1,
$$
where $\pi_0$ and  $\pi_1$  are disjoint representations of
$C^*({\bf W})$ on the Hilbert spaces
\begin{equation*}
\begin{split}
\cK_0:&=\overline{\text{\rm span}}\left\{\pi(W_\beta)
\Delta_{g^{-1}}(\pi({\bf W}),\pi({\bf W})^*)\cK:\ \beta\in
\FF_n^+\right\}\quad \text{ and } \\
 \cK_1:&=\cK\ominus \cK_0,
\end{split}
\end{equation*}
 respectively, where $\pi({\bf W}):=(\pi(W_1),\ldots, \pi(W_n))$. Moreover, up to an isomorphism,
\begin{equation*}
\cK_0\simeq F^2(H_n)\otimes \cG, \quad  \pi_0(X)=X\otimes I_\cG \quad
\text{ for } \  X\in C^*({\bf W}),
\end{equation*}
 where $\cG$ is a Hilbert space with
$$
\dim \cG=\dim \left[\text{\rm range}\,\Delta_{g^{-1}}(\pi({\bf W}),\pi({\bf W})^*)\right],
$$
 and $\pi_1$ is a $*$-representation  which annihilates the compact operators  in $C^*({\bf W})$  and
$$
\Delta_{g^{-1}}(\pi_1({\bf W}),\pi_1({\bf W})^*)=0.
$$
If $\pi'$ is another $*$-representation  of $C^*({\bf W})$  on a separable Hilbert space $\cK'$, then $\pi$ is unitarily equivalent to  $\pi'$ if and only if $\dim \cG=\dim \cG'$ and $\pi_1$ is unitarily equivalent to $\pi_1'$.
 \end{theorem}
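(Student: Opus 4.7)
The strategy is to apply the noncommutative Berezin kernel construction to the tuple $V:=(\pi(W_1),\ldots,\pi(W_n))$ and to exploit the norm-convergence hypothesis to guarantee that the kernel is a partial isometry. First I would note that Theorem \ref{compact}(i) places $P_{\CC 1}$ in $C^*({\bf W})$, so $\Delta:=\pi(P_{\CC 1})$ is well-defined; applying $\pi$ to the norm-convergent identity $\Delta_{g^{-1}}({\bf W},{\bf W}^*)=P_{\CC 1}$ of Proposition \ref{W} gives $\Delta_{g^{-1}}(V,V^*)=\Delta$, a projection. Moreover, each partial sum $\sum_{|\alpha|\le N} b_\alpha W_\alpha P_{\CC 1}W_\alpha^*$ is the orthogonal projection onto $\mathrm{span}\{e_\alpha:|\alpha|\le N\}$ in the model, so its $\pi$-image is a projection; these form an increasing net whose SOT-limit $A:=\sum_{\alpha\in\FF_n^+}b_\alpha V_\alpha\Delta V_\alpha^*$ is therefore itself a projection bounded by $I$. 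Hence $V\in\cD_{g^{-1}}(\cK)$ and the Berezin kernel $K:=K_{g,V}$ satisfies $K^*K=A$, making $K$ a partial isometry that intertwines $V^*$ with ${\bf W}^*\otimes I$ by Corollary \ref{Berez}.

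The next step is explicit. Since $\Delta$ is a projection, $\Delta^{1/2}=\Delta$, and the adjoint formula reads $K^*(e_\beta\otimes\xi)=\sqrt{b_\beta}\,V_\beta\Delta\xi$. Therefore
\[
\mathrm{range}(K^*)=\overline{\mathrm{span}}\{V_\beta\Delta\xi:\beta\in\FF_n^+,\ \xi\in\cK\}=\cK_0,
\]
which identifies $\cK_0=\mathrm{range}(A)$ and $\cK_1=\ker A=\ker K$. The key model identity
\[
P_{\CC 1}W_\alpha^* W_\beta P_{\CC 1}=\delta_{\alpha,\beta}\,b_\beta^{-1}\,P_{\CC 1}
\]
(which follows because $W_\alpha^* e_\beta$ has a nonzero component along $\CC 1$ only when $\alpha=\beta$) pushes forward via $\pi$ to yield
\[
K(V_\beta\Delta h)=(W_\beta 1)\otimes\Delta h,\qquad h\in\cK,
\]
from which $KK^*=I_{F^2(H_n)\otimes\cG}$ follows at once, with $\cG:=\mathrm{range}(\Delta)$. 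Hence $K$ restricts to a unitary $U:\cK_0\to F^2(H_n)\otimes\cG$. For the reducing property, $V_i\cK_0\subset\cK_0$ is immediate from $V_iV_\beta\Delta=V_{g_i\beta}\Delta$; and for $V_i^*\cK_0\subset\cK_0$ I would use the model facts $W_i^* W_j=\delta_{ij}D_i^2$ (a diagonal operator) together with the eigenvalue relation $D_i^2(W_\gamma P_{\CC 1})=(b_\gamma/b_{g_i\gamma})W_\gamma P_{\CC 1}$, which give $V_i^*V_{g_i\gamma}\Delta=(b_\gamma/b_{g_i\gamma})\,V_\gamma\Delta\in\cK_0$ while $V_i^*V_\beta\Delta=0$ whenever $\beta$ does not begin with $g_i$ (including $\beta=g_0$, since $W_i^*P_{\CC 1}=0$).

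With the decomposition $\cK=\cK_0\oplus\cK_1$ into reducing subspaces in hand, Theorem \ref{model} yields that $V|_{\cK_0}$ is a pure tuple (as $K|_{\cK_0}$ is isometric), while the inclusion $\mathrm{range}(\Delta)\subset\cK_0$ forces $\Delta|_{\cK_1}=0$, so $V|_{\cK_1}$ is Cuntz. The intertwining $KV_i=(W_i\otimes I_\cG)K$ on $\cK_0$ combined with the unitary $U$ produces the claimed equivalence $\pi_0(X)\simeq X\otimes I_\cG$. For the annihilation, $\pi_1(P_{\CC 1})=0$, so by multiplicativity $\pi_1(W_\alpha P_{\CC 1}W_\beta^*)=0$ and Theorem \ref{compact}(i) gives $\pi_1(\boldsymbol\cK(F^2(H_n)))=0$; this same fact (one summand faithful on compacts, the other vanishing) shows $\pi_0$ and $\pi_1$ are disjoint. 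Finally, the uniqueness clause is routine: an intertwining unitary $\pi\cong\pi'$ sends $\Delta$ to $\Delta'$ and hence $\cK_0$ to $\cK_0'$, so $\dim\cG=\dim\cG'$ (which by Theorem \ref{irreducible}(ii) classifies the pure part) and $\pi_1\cong\pi_1'$; conversely, these data recombine via $X\otimes I$ to give $\pi\cong\pi'$. The principal technical point, and essentially the only one beyond routine bookkeeping, is the verification that the partial sums defining $A$ are projections and that $KK^*=I_{F^2(H_n)\otimes\cG}$, which together deliver the clean ampliation form for $\pi_0$.
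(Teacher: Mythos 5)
Your proof is correct, but it follows a genuinely different route from the paper's. The paper proves Theorem \ref{wold} abstractly: since $\boldsymbol\cK(F^2(H_n))\subset C^*({\bf W})$ by Theorem \ref{compact}, the standard structure theory of representations of $C^*$-algebras containing the compacts immediately yields the splitting $\pi=\pi_0\oplus\pi_1$ with $\cK_0=\overline{\operatorname{span}}\{\pi(X)\cK: X \text{ compact}\}$, the fact that $\pi_0$ is a multiple of the identity representation, and the uniqueness clause; the only domain-specific work is rewriting $\cK_0$ via $W_\beta P_{\CC 1}W_\alpha^*$ and invoking Theorem \ref{irreducible}(ii) for $\dim\cG=\dim\cG'$. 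You instead build everything by hand through the Berezin kernel of $V=\pi({\bf W})$: the identity $\Delta V_\alpha^*V_\beta\Delta=\delta_{\alpha\beta}b_\beta^{-1}\Delta$ obtained by pushing forward $P_{\CC 1}W_\alpha^*W_\beta P_{\CC 1}=\delta_{\alpha\beta}b_\beta^{-1}P_{\CC 1}$, the computation $KK^*=I$, and the explicit verification that $\cK_0=\operatorname{range}K^*$ reduces each $V_i$. This is essentially a direct proof of the paper's \emph{geometric} Wold decomposition (Theorem \ref{wold2}), which the paper derives as a \emph{consequence} of Theorem \ref{wold}; you have reversed the logical order. What your route buys is an explicit intertwining unitary $U=K|_{\cK_0}$ and self-containedness (no appeal to the representation theory of the compacts beyond disjointness, which you correctly reduce to one summand being nondegenerate on the compacts and the other vanishing); what it costs is the extra computation, all of which checks out (the partial sums $\sum_{|\alpha|\le N}b_\alpha V_\alpha\Delta V_\alpha^*$ are indeed increasing projections, and $W_i^*W_j=\delta_{ij}D_i^2$ does give the coinvariance of $\cK_0$). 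One small point worth making explicit: Corollary \ref{Berez} gives $KV_i^*=(W_i^*\otimes I)K$, so the identity $KV_iK^*=KK^*(W_i\otimes I)=W_i\otimes I$ together with its adjoint is what upgrades the intertwining to a $*$-isomorphism $\pi_0(X)\cong X\otimes I_\cG$ on all of $C^*({\bf W})$.
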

 \begin{proof}
 According to Theorem \ref{compact},   all the
compact operators $ \boldsymbol\cK(F^2(H_n))$ in $B(F^2(H_n))$ are contained in the
$C^*$-algebra $C^*({\bf W})$.
 Due to  standard theory of
representations of  $C^*$-algebras \cite{Arv-book},
the representation $\pi$ decomposes into a direct sum
$\pi=\pi_0\oplus \pi_1$ on $ \cK=\cK_0\oplus \cK_1$,
where $\pi_0$, $\pi_1$  are disjoint representations of
$C^*({\bf W})$ on the Hilbert spaces
$$\cK_0:=\overline{\text{\rm span}}\{\pi(X)\cK:\ X\in  \boldsymbol\cK(F^2(H_n))\}
\quad \text{ and  }\quad  \cK_1:=\cK_0^\perp,
$$
respectively,
such that
 $\pi_1$ annihilates  the compact operators in $B(F^2(H_n))$ and
  $\pi_0$ is uniquely determined by the action of $\pi$ on the
  ideal $\boldsymbol \cK(F^2(H_n))$ of compact operators.
Since every representation of $  \boldsymbol\cK(F^2(H_n))$ is equivalent to a
multiple of the identity representation, we deduce
 that
\begin{equation*}
\cK_0\simeq\cN_J\otimes \cG, \quad  \pi_0(X)=X\otimes I_\cG, \quad
X\in C^*({\bf W}),
\end{equation*}
 for some Hilbert space $\cG$.
 Using  the proof of Theorem \ref{compact},  part (i),  one can
 see
that
\begin{equation*}\begin{split}
\cK_0&:=\overline{\text{\rm span}}\{\pi(X)\cK:\ X\in \boldsymbol\cK(F^2(H_n)))\}\\
&=\overline{\text{\rm span}}\{\pi(W_\beta P_{\CC 1} W_\alpha^*)\cK:\
 \alpha, \beta\in \FF_n^+\}\\
&= \overline{\text{\rm span}}\left\{\pi(W_\beta)\Delta_{g^{-1}}(\pi({\bf W}),\pi({\bf W})^*)\cK:\ \beta\in
\FF_n^+\right\}.
\end{split}
\end{equation*}
Since $\Delta_{g^{-1}}({\bf W},{\bf W}^*)=P_{\CC 1}\in C^*({\bf W})$, we have
$
\Delta_{g^{-1}}(\pi_1({\bf W}),\pi_1({\bf W})^*)=0$
  and
$
\dim \cG=\dim \left[\text{\rm range}\,\pi(P_{\CC 1})\right].
$
To prove the uniqueness, note that
according to the standard theory of representations of  $C^*$-algebras,
$\pi$ and $\pi'$ are unitarily equivalent if and only if
 $\pi_0$ and $\pi_0'$ (resp.~$\pi_1$ and $\pi_1'$) are unitarily equivalent. Using Theorem \ref{irreducible},  part (ii), we deduce that $\dim\cG=\dim\cG'$.
The proof is complete.
 \end{proof}

Here is our geometric version  of the Wold decomposition for unital 
$*$-representation  of  the $C^*$-algebra $C^*({\bf W})$.  This extends the corresponding result from 
\cite{Po-wold} to our more general setting.

\begin{theorem} \label{wold2}  Let $g$ be an admissible free holomorphic function  and 
let  ${\bf W}=(W_1,\ldots, W_n)$ be  the  weighted left creation operators associated with $g$ such 
   that the series $\sum_{k=0}^\infty\sum_{|\alpha|=k} a_\alpha W_\alpha W_\alpha^*$ is convergent in the operator norm topology, where $g^{-1}=1+\sum_{|\alpha|\geq 1} a_\alpha Z_\alpha$.
Let   $\pi$ be    a unital
$*$-representation  of the $C^*$-algebra $C^*({\bf W})$  on a separable Hilbert
space  $\cK$ and
   set
$V_i:=\pi(W_i)$. Then the  noncommutative Berezin kernel ${ K_{g,V}}$   is a   partial isometry. Setting
$$
\cK^{(0)}:=\text{\rm range}\, K_{g,V}^* \ \text{ and }  \ \cK^{(1)}:= \ker K_{g,V},
$$
the orthogonal decomposition  
 $\cK=\cK^{(0)}\bigoplus \cK^{(1)}$  has  the following properties.
\begin{enumerate}
\item[(i)] $\cK^{(0)}$ and $ \cK^{(1)}$ are reducing  subspaces for each operator $V_i$.

\item[(ii)]  $V|_{\cK^{(0)}}:=(V_1|_{\cK^{(0)}},\ldots, V_n|_{\cK^{(0)}})$ is  a pure tuple in
 $\cD_{g^{-1}}(\cK^{(0)})$.

\item[(iii)]   $V|_{\cK^{(1)}}:=(V_1|_{\cK^{(1)}},\ldots, V_n|_{\cK^{(1)}})$  is a Cuntz tuple in
$\cD_{g^{-1}}(\cK^{(1)})$.

\end{enumerate}
Moreover, 
 the  Berezin kernel
   ${ K_{g,V}}|_{\cK^{(0)}}$ is a unitary operator satisfying relation
     $$V_{i}|_{\cK^{(0)}}=\left({ K_{g,V}}|_{\cK^{(0)}}\right)^* \left(W_{i}\otimes I_{\cD}\right)\left({ K_{g,V}}|_{\cK^{(0)}}\right),\qquad i\in \{1,\ldots, n\},  $$
where $\cD:=\text{\rm range}\, \Delta_{g^{-1}}( V, V^*)$ and
$\Delta_{g^{-1}}( V, V^*)
$
is an orthogonal projection. 

In addition,  the orthogonal decomposition of $\cK$ is uniquely determined by   the properties (i), (ii), and (iii)  and we have 
$$
\cK^{(0)}=\bigoplus_{\alpha\in \FF_n^+}  V_{\alpha} (\cD),\qquad  \text{where} \quad
\cD=\cK\ominus\left(\bigoplus_{i=1}^n  \overline{V_i\cK}\right),
$$
and  $$\cK^{(1)}=\bigcap_{s=0}^\infty\left(\bigoplus_{\alpha\in \FF_n^+, |\alpha|=s}   \overline{V_{\alpha} (\cK)}\right).
$$
\end{theorem}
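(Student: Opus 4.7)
The plan is to exploit the norm-convergence hypothesis so that the algebraic structure Proposition~\ref{W} provides for ${\bf W}$ transfers, via $\pi$, to $B(\cK)$. By norm continuity of $\pi$, the operator $P:=\Delta_{g^{-1}}(V,V^*)=\pi(P_{\CC 1})$ is a self-adjoint projection, so $\cD=P\cK$ is closed. The key algebraic identity in $B(F^2(H_n))$ is the one-line calculation
$$
P_{\CC 1}\,W_\alpha^*W_\beta\,P_{\CC 1}=\frac{\delta_{\alpha,\beta}}{b_\alpha}\,P_{\CC 1},
$$
which follows from $W_\alpha(1)=b_\alpha^{-1/2}e_\alpha$. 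It shows that $\{b_\alpha W_\alpha P_{\CC 1}W_\alpha^*\}_{\alpha\in\FF_n^+}$ is a family of mutually orthogonal rank-one projections. Since $\pi$ is a $*$-homomorphism, the same orthogonality holds for $\{b_\alpha V_\alpha PV_\alpha^*\}_{\alpha\in\FF_n^+}$ in $B(\cK)$, and these projections are dominated by $I_\cK$. Hence $Q:=K_{g,V}^*K_{g,V}=\sum_{\alpha\in\FF_n^+}b_\alpha V_\alpha PV_\alpha^*$ converges in SOT to an orthogonal projection, so $K_{g,V}$ is a partial isometry with $\cK^{(0)}=\text{\rm range}\,Q=(\ker K_{g,V})^\perp=\text{\rm range}\,K_{g,V}^*$.

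Next, applying $\pi$ to the identity above gives $PV_\alpha^*V_\beta P=\frac{\delta_{\alpha,\beta}}{b_\alpha}P$, from which I read off $\langle V_\alpha d,V_\beta d'\rangle=\frac{\delta_{\alpha,\beta}}{b_\alpha}\langle d,d'\rangle$ for $d,d'\in\cD$. Hence the subspaces $\{V_\alpha\cD\}_{\alpha\in\FF_n^+}$ are pairwise orthogonal and the linear map
$$
U:F^2(H_n)\otimes\cD\to\cK,\qquad U(e_\alpha\otimes d):=\sqrt{b_\alpha}\,V_\alpha d,
$$
extends to a well-defined isometry whose image $\bigoplus_{\alpha\in\FF_n^+}V_\alpha\cD$ lies inside $\cK^{(0)}=\text{\rm range}\,K_{g,V}^*=\overline{\text{\rm span}}\{V_\alpha\cD:\alpha\in\FF_n^+\}$. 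A direct computation with the same identity yields $K_{g,V}(V_\beta d)=b_\beta^{-1/2}e_\beta\otimes d$, hence $K_{g,V}\circ U=I_{F^2(H_n)\otimes\cD}$. Since $K_{g,V}|_{\cK^{(0)}}$ is an isometry with right inverse $U$, it is unitary, $U=(K_{g,V}|_{\cK^{(0)}})^{-1}$, and $\cK^{(0)}=\bigoplus_{\alpha\in\FF_n^+}V_\alpha\cD$.

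To verify that $\cK^{(0)}$ and $\cK^{(1)}$ are reducing for each $V_i$, I use the adjoint $V_iK_{g,V}^*=K_{g,V}^*(W_i\otimes I_\cD)$ of Corollary~\ref{Berez} to get $V_i\cK^{(0)}\subset\cK^{(0)}$ and (by orthogonal complement) $V_i^*\cK^{(1)}\subset\cK^{(1)}$. For $V_i^*\cK^{(0)}\subset\cK^{(0)}$ I observe that a direct calculation gives $W_i^*W_\alpha P_{\CC 1}=(b_\gamma/b_\alpha)W_\gamma P_{\CC 1}$ if $\alpha=g_i\gamma$ and $=0$ otherwise; applying $\pi$ places $V_i^*(V_\alpha\cD)$ in $\cK^{(0)}\cup\{0\}$. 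The pure/Cuntz split is then immediate: on $\cK^{(0)}$ the defect equals $P$ and $\sum_\alpha b_\alpha V_\alpha PV_\alpha^*|_{\cK^{(0)}}=Q|_{\cK^{(0)}}=I_{\cK^{(0)}}$, so $V|_{\cK^{(0)}}$ is pure; on $\cK^{(1)}$ we have $\cD\subset\cK^{(0)}$, so $P|_{\cK^{(1)}}=0$ and $\Delta_{g^{-1}}(V|_{\cK^{(1)}},V|_{\cK^{(1)}}^*)=0$, making $V|_{\cK^{(1)}}$ Cuntz. The stated intertwining is just the adjoint of Corollary~\ref{Berez} conjugated by the unitary $K_{g,V}|_{\cK^{(0)}}$.

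For uniqueness, if $\cK=\cK_0'\oplus\cK_1'$ is any decomposition satisfying (i)--(iii), the Cuntz condition on $\cK_1'$ forces $P|_{\cK_1'}=0$, so $\cD\subset\cK_0'$; invariance under all $V_\alpha$ then gives $\cK^{(0)}=\overline{\text{\rm span}}\{V_\alpha\cD\}\subset\cK_0'$, while the pure condition on $\cK_0'$ yields $Q|_{\cK_0'}=I_{\cK_0'}$ and hence $\cK_0'\subset\text{\rm range}\,Q=\cK^{(0)}$. For the concrete descriptions, the identity $W_i^*P_{\CC 1}=0$ transfers under $\pi$ to $V_i^*P=0$, so $\cD\subset\bigcap_i\ker V_i^*$; conversely, if $h\in\bigcap_i\ker V_i^*$ then $V_\alpha^*h=0$ for every $|\alpha|\geq 1$, and the norm-convergent series yields $Ph=\Delta_{g^{-1}}(V,V^*)h=a_{g_0}h=h$, proving $\cD=\bigcap_i\ker V_i^*=\cK\ominus\bigoplus_iV_i\cK$. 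Iterating the orthogonal decomposition $\cK=\cD\oplus\bigoplus_i\overline{V_i\cK}$ produces $\cK=\bigoplus_{|\alpha|<s}V_\alpha\cD\oplus\bigoplus_{|\alpha|=s}\overline{V_\alpha\cK}$ for each $s$, and letting $s\to\infty$ identifies $\cK^{(1)}=\bigcap_{s\geq 0}\bigoplus_{|\alpha|=s}\overline{V_\alpha\cK}$. The crucial point is the partial-isometry step in the first paragraph: norm convergence is exactly what lets $\pi$ preserve the projection $P_{\CC 1}$ and the orthogonality of the family $\{b_\alpha W_\alpha P_{\CC 1}W_\alpha^*\}$, and without it only the SOT convergence of Proposition~\ref{W} is available, which $\pi$ need not respect.
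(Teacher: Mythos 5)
Your proof is correct, but it follows a genuinely different route from the paper's. The paper derives Theorem \ref{wold2} from the $C^*$-algebraic Wold decomposition of Theorem \ref{wold}: since $\boldsymbol\cK(F^2(H_n))\subset C^*({\bf W})$ (Theorem \ref{compact}), the standard representation theory of $C^*$-algebras containing the compacts produces the splitting $\pi=\pi_0\oplus\pi_1$ with $\pi_0$ a multiple of the identity representation, and the paper then identifies $\cK_0=\cK^{(0)}$, $\cK_1=\cK^{(1)}$ and inherits uniqueness from the disjointness of $\pi_0$ and $\pi_1$. You instead work entirely from the single transferred identity $P\,V_\alpha^*V_\beta\,P=\delta_{\alpha,\beta}b_\alpha^{-1}P$ (and its companions $V_i^*P=0$, $V_i^*V_\alpha P=(b_\gamma/b_\alpha)V_\gamma P$), building the unitary $U(e_\alpha\otimes d)=\sqrt{b_\alpha}\,V_\alpha d$ explicitly and proving the reducing property, the pure/Cuntz dichotomy, the uniqueness, and the concrete descriptions of $\cD$, $\cK^{(0)}$, $\cK^{(1)}$ by direct computation. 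The shared core is the observation, also used by the paper, that norm convergence of $\Delta_{g^{-1}}({\bf W},{\bf W}^*)$ forces $\pi(P_{\CC 1})$ to be a projection and $\{b_\alpha V_\alpha PV_\alpha^*\}$ to be pairwise orthogonal projections, whence $K_{g,V}$ is a partial isometry. What your approach buys is self-containedness: it avoids Theorems \ref{compact} and \ref{wold} and the classification of representations of the compacts, and in fact never uses separability of $\cK$. What the paper's approach buys is the $C^*$-algebraic packaging (disjointness of the two summand representations, the multiplicity space $\cG$, the unitary-equivalence criterion), which is reused later in Theorems \ref{C*} and \ref{ideal2}. Both arguments are sound; yours is an acceptable, more elementary substitute.
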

\begin{proof} Due to Theorem \ref{wold},  the Hilbert space $\cK$ admits an   orthogonal decomposition
$\cK=\cK_0\oplus \cK_1$ such that  $\cK_0$ and $ \cK_1$ are reducing subspaces for each operator $V_i$ and  there is a unitary operator $U:\cK_0\to F^2(H_n)\otimes \cG$ with the property that
$V_i|_{\cK_0}=U^*(W_i\otimes I_\cG)U$ for every $i\in \{1,\ldots, n\}$. 
With respect to the    orthogonal decomposition $\cK=\cK_0\oplus \cK_1$, we have
$\Delta_{g^{-1}}( V, V^*)|_{ \cK_1}=0$,
\begin{equation*}
 V_i=\left(\begin{matrix} V_i|_{\cK_0}&0\\0&V_i|_{\cK_1}
\end{matrix}\right)\quad \text{ and }\quad  \Delta_{g^{-1}}( V, V^*)=\left(\begin{matrix}\Delta_{g^{-1}}( V, V^*)|_{\cK_0}&0\\0&0
\end{matrix}\right).
\end{equation*}
Taking into account that   the series $\sum_{k=0}^\infty\sum_{|\alpha|=k} a_\alpha W_\alpha W_\alpha^*$ is convergent in the operator norm topology, 
$$\Delta_{g^{-1}}( {\bf W}, {\bf W}^*)=\sum_{k=0}^\infty\sum_{|\alpha|=k} a_\alpha W_\alpha W_\alpha^*=P_{\CC 1}
$$ is an orthogonal projection in $C^*({\bf W})$. Since
$\pi$ is a $*$-representation of $C^*({\bf W})$, we deduce that    the operators
$\Delta_{g^{-1}}( V, V^*)$ and   $\Delta_{g^{-1}}( V, V^*)|_{\cK_0}$ are orthogonal projections.
Consequently,  we have
\begin{equation*}
\cD:=\Delta_{g^{-1}}( V, V^*)\cK=\Delta_{g^{-1}}( V, V^*)\cK_0.
\end{equation*}
 Since $
  b_\beta W_\beta P_{\CC 1}
W_\beta^* =P_{\CC e_\beta}$  and  $\Delta_{g^{-1}}( {\bf W}, {\bf W}^*) =P_{\CC 1}
\in C^*({\bf W})$, and using the fact that  $\pi$ is a $*$-representation of $C^*({\bf W})$, we deduce that the operators 
$b_\beta^{(m)}V_\beta  \Delta_{g^{-1}}(V,V^*) V_\beta^*$,   where
$\beta\in \FF_n^+$, are pairwise orthogonal projections and 
\begin{equation*}
K_{g,V}^* K_{g,V}
=\sum_{\beta\in \FF_n^+} b_\beta V_\beta
\Delta_{g^{-1}}( V, V^*) V_\beta^*
\end{equation*}
is an orthogonal projection. This shows, in particular, that $V=(V_1,\ldots, V_n)\in \cD_{g^{-1}}(\cK)$.
On the other hand, with respect to   orthogonal decomposition
$\cK=\cK_0\oplus \cK_1$, 
 the noncommutative Berezin kernel $K_{g,V}$ has the representation
$$
K_{g,V}=\left[ K_{g,V|_{\cK_0}} \ 0\right]:\cK_0\oplus \cK_1\to
\cD.
$$
Since $V|_{\cK_0}$  is a pure $n$-tuple in $\cD_{g^{-1}}(\cK_0)$, the   Berezin kernel
   ${ K_{g,V|_{\cK_0}}}$ is an isometry satisfying relation
     $$V_{i}|_{\cK_0}=\left({ K_{g,V|_{\cK_0}}}\right)^* \left(W_{i}\otimes I_{\cD}\right)\left({ K_{g,V}}|_{\cK_0}\right),\qquad i\in \{1,\ldots, n\}.$$
Moreover,  since $V_i|_{\cK_0}=U^*(W_i\otimes I_\cG)U$ for every $i\in \{1,\ldots, n\}$ and $K_{g, {\bf W}\otimes I_\cG}$ is a unitary operator, so is  ${ K_{g,V|_{\cK_0}}}$.
Since  $K_{g,V}$ is a partial isometry, we have
$$
\cK^{(0)}:=\text{\rm range}\, K_{g,V}^* =\cK_0\ \text{ and }  \ \cK^{(1)}:= \ker K_{g,V}=\cK_1.
$$
The fact that the orthogonal decomposition of $\cK$ is uniquely determined by   the properties (i), (ii), and (iii)
is a consequence of Theorem \ref{wold}.
Using the definition of  the  Berezin kernel $K_{g,V}$, we obtain
$$
 \cK^{(0)} =\left\{ \xi\in \cK:\ \sum_{\beta\in \FF_n^+} b_\beta V_\beta
\Delta_{g^{-1}}( V, V^*)V_\beta^*\xi=\xi\right\}
$$
  and
\begin{equation}
\label{K1}
\cK^{(1)}  =\left\{ \xi\in \cK:\ \Delta_{g^{-1}}( V, V^*)V_\alpha^*\xi=0 \ \text{ for every } \  \alpha\in \FF_n^+\right\}.
 \end{equation}
Since the operators 
$b_\beta V_\beta  \Delta_{g^{-1}}(V,V^*) V_\beta^*$,  
$\beta\in \FF_n^+$, are pairwise orthogonal projections,  we deduce that $\sqrt{b_\beta} V_\beta  \Delta_{g^{-1}}(V,V^*)$ is a partial isometry with range equal to $V_\beta \cD$   and $V_\alpha \cD \perp V_\beta \cD$ if $\alpha\neq \beta$. On the other hand, due to Theorem \ref{wold},  
 we have  $ \cK_0=\overline{\text{\rm span}} \{V_\alpha \cD:\ \alpha\in \FF_n^+\}$. 
  Now, it is clear that
  \begin{equation}
  \label{K0}
  \cK^{(0)}=\bigoplus_{\alpha\in \FF_n^+}   V_{\alpha} (\cD).
  \end{equation}
Due to relation \eqref{K1}, if $\xi\in \cK_1$ then $\Delta_{g^{-1}}( V, V^*)\xi=0$, which implies 
$\xi\in \overline{\Span}_{ |\alpha|=1}V_\alpha\cK^{(1)}$. Since  $ \cK_1$ are reducing subspaces for each operator $V_i$, we obtain  $\cK^{(1)}= \overline{\Span}_{ |\alpha|=1}V_\alpha\cK^{(1)}$.
Note that the later relation implies 
  $\cK^{(1)}=  \Span_{ |\alpha|=s} V_\alpha \cK^{(1)}$ for any $s\in \NN$ and, consequently, we have 
\begin{equation}
\label{sp}\cK^{(1)}=\bigcap_{s=0}^\infty \Span_{ |\alpha|=s} V_\alpha \cK^{(1)}.
\end{equation}
Using the orthogonal decomposition
$\cK=\cK_0\oplus \cK_1$ and that $\cK_0$ and $ \cK_1$ are reducing subspaces for each operator $V_i$,
one can  use the relation \eqref{sp} to  prove that 
$$
\bigcap_{s=0}^\infty\overline{\Span}_{ |\alpha|=s} V_\alpha \cK
=\left(\bigcap_{s=0}^\infty\overline{\Span}_{ |\alpha|=s} V_\alpha \cK^{(0)}\right)\bigoplus \cK^{(1)}.
$$
Due to relation \eqref{K0}, we have  $\bigcap_{s=0}^\infty\overline{\Span}_{ |\alpha|=s} V_\alpha \cK^{(0)}=\{0\}$. Therefore,
$
\bigcap_{s=0}^\infty\overline{\Span}_{ |\alpha|=s} V_\alpha \cK=\cK^{(1)}.
$
Since $V_i^* V_j=0$  for any $i,j\in \{1,\ldots, n\}$ with $i\neq j$, 
we have
 $
\overline{\Span}_{ |\alpha|=s} V_\alpha \cK=\bigoplus_{|\alpha|=s} \overline{ V_\alpha \cK}.
$
Note that the orthogonal decomposition  \eqref{K0} and the fact that  $\cK^{(1)}= \overline{\Span}_{ |\alpha|=1}V_\alpha\cK^{(1)}$  can be used  to show that 
$\cD=\cK\ominus\left(\bigoplus_{i=1}^n  \overline{V_i\cK}\right)$.
This completes the proof.
\end{proof}
We remark that Theorem \ref{wold} holds, in particular, for  the admissible free holomorphic functions provided by  Example \ref{scale}.

\bigskip

 \begin{remark}  The  subspaces $\cK^{(0)}$ and $ \cK^{(1)}$   in Theorem \ref{wold2} satisfy the relations
   $$
 \cK^{(0)} =\left\{ \xi\in \cK:\ \sum_{\beta\in \FF_n^+} b_\beta V_\beta
\Delta_{g^{-1}}( V, V^*)V_\beta^*\xi=\xi\right\}
$$
  and
$$
\cK^{(1)}  =\left\{ \xi\in \cK:\ \Delta_{g^{-1}}( V, V^*)V_\alpha^*\xi=0 \ \text{ for every } \  \alpha\in \FF_n^+\right\}. 
 $$
 \end{remark}

Let $\pi$ be a $*$-representation of $C^*({\bf W})$  on a Hilbert space $\cK$ and  set $V:=(V_1,\ldots, V_n)$, where 
$V_i:=\pi(W_i)$ for $i\in\{1,\ldots, n\}$.
 We saw in the proof of Theorem \ref{wold2} that if  the series $\sum_{k=0}^\infty\sum_{|\alpha|=k} a_\alpha W_\alpha W_\alpha^*$ is convergent in the operator norm topology,  then $\Delta_{g^{-1}}( V, V^*)$  is an orthogonal projection and 
  the operators 
$b_\beta V_\beta  \Delta_{g^{-1}}(V,V^*) V_\beta^*$,  
$\beta\in \FF_n^+$, are pairwise orthogonal projections. Hence,
$
\sum_{\beta\in \FF_n^+} b_\beta V_\beta  \Delta_{g^{-1}}(V,V^*) V_\beta^*
$
is an orthogonal projection  and  $V\in \cD_{g^{-1}}(\cK)$.  

A unital $*$-representation $\pi$  of the $C^*$-algebra  $C^*({\bf W})$ such that 
  $
  \Delta_{g^{-1}}(\pi({\bf W}),\pi({\bf W}^*))=0,
  $
is called {\it Cuntz type representation}. 

 \begin{definition} Let $g$ be an admissible free holomorphic function  and 
let  ${\bf W}=(W_1,\ldots, W_n)$ be  the  weighted left creation operators associated with $g$ such 
   that the series $\sum_{k=0}^\infty\sum_{|\alpha|=k} a_\alpha W_\alpha W_\alpha^*$ is convergent in the operator norm topology, where $g^{-1}=1+\sum_{|\alpha|\geq 1} a_\alpha Z_\alpha$.
  The algebra  $\cO(g)$ is   the universal $C^*$-algebra  generated by $\pi(W_1),\ldots, \pi(W_n)$ and the identity, where 
  $\pi$ is a unital $*$-representation of the $C^*$-algebra  $C^*({\bf W})$ such that 
  $
  \Delta_{g^{-1}}(\pi({\bf W}),\pi({\bf W}^*))=0,
  $
  where  $\pi({\bf W}):=(\pi(W_1),\ldots, \pi(W_n))$.
  \end{definition}
  
  Note that if $g^{-1}=1-(Z_1+\cdots + Z_n)$, $n\geq 2$,  then $\cO(g)$  coincides with the Cuntz algebra $\cO_n$ (see \cite{Cu}).

   \begin{lemma}  \label{Cuntz}
    Let $\{\pi_\omega\}_\omega $ be the collection  of all    Cuntz type $*$-representations of 
$C^*({\bf W})$ such that $C^*(\pi_\omega({\bf W}))$ is irreducible. The universal algebra  $\cO(g)$   is  $*$-isomorphic  to  the $C^*$-algebra 
   $C^*(\widetilde\pi({\bf W}))$, where  $\widetilde\pi:=\bigoplus_\omega \pi_\omega$  is the direct sum  representation of $C^*({\bf W})$.
   \end{lemma}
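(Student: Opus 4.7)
The plan is to use the universal property defining $\cO(g)$ together with the standard fact that irreducible $*$-representations separate points of any $C^*$-algebra. I would proceed in three steps.

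First, note that the direct sum $\widetilde\pi = \bigoplus_\omega \pi_\omega$ is itself a unital $*$-representation of $C^*({\bf W})$, and is of Cuntz type because $\Delta_{g^{-1}}$ is preserved under direct sums of representations: if $\Delta_{g^{-1}}(\pi_\omega({\bf W}),\pi_\omega({\bf W})^*)=0$ for every index $\omega$, then $\Delta_{g^{-1}}(\widetilde\pi({\bf W}),\widetilde\pi({\bf W})^*)=0$ as well, once one verifies that the series defining $\Delta_{g^{-1}}$ on the direct sum converges to the direct sum of zeros (which follows from the uniform operator-norm convergence assumption that is standing throughout this section). Hence the universal property of $\cO(g)$ furnishes a surjective unital $*$-homomorphism
\[
\Phi:\cO(g)\longrightarrow C^*(\widetilde\pi({\bf W}))
\]
sending the canonical generators of $\cO(g)$ to $\widetilde\pi(W_1),\ldots,\widetilde\pi(W_n)$.

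Second, I would show $\Phi$ is isometric, hence injective. By construction, the $C^*$-norm on $\cO(g)$ satisfies $\|x\|_{\cO(g)}=\sup_{\pi}\|\pi(x)\|$, where the supremum is taken over all Cuntz type unital $*$-representations $\pi$ of $C^*({\bf W})$ (with $x\in\cO(g)$ identified with its image under the unique $*$-homomorphism $\cO(g)\to C^*(\pi({\bf W}))$). The universal property yields a bijective correspondence between Cuntz type $*$-representations of $C^*({\bf W})$ and unital $*$-representations of $\cO(g)$; moreover, under this correspondence the condition that $C^*(\pi_\omega({\bf W}))$ is irreducible translates precisely into irreducibility of the associated representation of $\cO(g)$, since the two algebras have the same commutant when acting on the representation space.

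Third, I would invoke the general $C^*$-algebraic fact (a consequence of the GNS construction together with Zorn's lemma producing pure states that separate elements, cf.\ \cite{Arv-book}) that, for any $C^*$-algebra $A$, the norm of an element coincides with the supremum of $\|\sigma(a)\|$ over all irreducible $*$-representations $\sigma$. Applied to $A=\cO(g)$ and combined with the correspondence above, this yields
\[
\|x\|_{\cO(g)}=\sup_{\omega}\|\pi_\omega(x)\|=\left\|\bigoplus_\omega \pi_\omega(x)\right\|=\|\Phi(x)\|,
\]
so $\Phi$ is isometric and therefore a $*$-isomorphism. The main technical point, and the one I would devote the most care to, is establishing the clean bijection between Cuntz type representations of $C^*({\bf W})$ and representations of $\cO(g)$, along with the matching of the two notions of irreducibility; everything else is a direct appeal to the universal property and to the standard separation theorem for irreducible representations.
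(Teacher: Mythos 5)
Your argument is correct and is essentially the paper's proof viewed from the other direction: the paper fixes an arbitrary Cuntz type representation $\pi$, uses GNS to produce an irreducible representation $\gamma$ of $C^*(\pi({\bf W}))$ attaining the norm of a given polynomial, observes that $\gamma\circ\pi$ is an irreducible Cuntz type representation (hence dominated by $\widetilde\pi$), and concludes that $C^*(\widetilde\pi({\bf W}))$ itself has the universal property; your version packages the same two ingredients (the universal property and the fact that irreducible representations compute the norm) as the statement that the canonical surjection $\cO(g)\to C^*(\widetilde\pi({\bf W}))$ is isometric. The correspondence between Cuntz type representations of $C^*({\bf W})$ and representations of $\cO(g)$, and the matching of irreducibility via equality of commutants, is exactly the point the paper also relies on implicitly, so no new gap is introduced.
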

   
 \begin{proof} 
 Note that $\widetilde\pi$ is a Cuntz type $*$-representation of $C^*({\bf W})$ and set $\widetilde V:=(\widetilde V_1,\ldots, \widetilde V_n)$, where 
$\widetilde V_i:=\widetilde \pi(W_i)$ for $i\in\{1,\ldots, n\}$.  We prove that  $C^*(\widetilde V)$ is  $*$-isomorphic  to
 $\cO(g)$. 
 Let $\pi$ be an arbitrary Cuntz type  $*$-representation  of 
 $C^*({\bf W})$ and   set $V:=(V_1,\ldots, V_n)$, where 
$V_i:=\pi(W_i)$ for $i\in\{1,\ldots, n\}$. We need to prove that there is a surjective  $*$-homomorphism $\Omega:C^*(\widetilde V)\to C^*(V)$ such that $\Omega(\widetilde V_i)=V_i$.
   Due to the GNS construction, given a polynomial  $q ({\bf Z}, {\bf Y}\})$  in noncommutative indeterminates ${\bf Z}=\{Z_1,\ldots, Z_n\}$ and ${\bf Y}=\{Y_1,\ldots, Y_n\}$, there is an irreducible representation $\gamma$ of  $C^*( V)$ such that 
\begin{equation*}
 \|\gamma\left(q(V, V^*)\right)\|=  \|q(V,  V^*)\|.
\end{equation*}
Note that $\gamma\circ\pi$ is  an irreducible Cuntz type $*$-representation of $C^*({\bf W})$. Consequently,
$$
\|q((\gamma\circ\pi)({\bf W}), (\gamma\circ\pi)({\bf W})^*)\|\leq \|q(\widetilde V, \widetilde V^*)\|.
$$
 Combining these relations, we obtain
   $
 \|q(V, V^*)\|\leq  \|q(\widetilde V, \widetilde V^*)\|,
$
for all  polynomials  $q ({\bf Z}, {\bf Y}\})$, which proves that there is a surjective  $*$-homomorphism $\Omega:C^*(\widetilde V)\to C^*(V)$ such that $\Omega(\widetilde V_i)=V_i$. The proof is complete.
 \end{proof}
 
 The following result is an analogue of Coburn's theorem   \cite{Co}  for  the $C^*$-algebra generated by  the unilateral  shift  on the Hardy space $H^2$ of the disk  $\DD=\{z\in \CC: |z|<1\}$ and Cuntz' extension \cite{Cu} for the $C^*$-algebra generated by the left creation operators on the full Fock space with $n$ generators.
 
\begin{theorem} \label{exact1}   Let $g$ be an admissible free holomorphic function  with $g^{-1}=1+\sum_{|\alpha|\geq 1} a_\alpha Z_\alpha$ and 
let  ${\bf W}=(W_1,\ldots, W_n)$ be  the  weighted left creation operators associated with $g$.
  If  $\sum_{k=0}^\infty\sum_{|\alpha|=k} a_\alpha W_\alpha W_\alpha^*$ is convergent in the operator norm topology, then the sequence
of $C^*$-algebras
$$
0\to \boldsymbol\cK(F^2(H_n))\to C^*({\bf W})\to \cO(g)\to 0
$$
is exact,  where  $\boldsymbol\cK(F^2(H_n))$ stands for the ideal of all compact
operators  in $B(F^2(H_n))$.
\end{theorem}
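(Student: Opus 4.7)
The plan is to display $\cO(g)$ as the quotient of $C^*({\bf W})$ by the ideal of compact operators, using Lemma~\ref{Cuntz} and the universal property of $\cO(g)$ it encodes, together with the fact that $\boldsymbol\cK(F^2(H_n))$ is precisely the closed two-sided ideal of $C^*({\bf W})$ generated by $\Delta_{g^{-1}}({\bf W},{\bf W}^*)=P_{\CC 1}$.

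First, the inclusion $\boldsymbol\cK(F^2(H_n))\subset C^*({\bf W})$ is already given by Theorem~\ref{compact}(i), and it is automatically a closed two-sided ideal of $C^*({\bf W})$ since it is one of $B(F^2(H_n))$. Next, take $\widetilde\pi:=\bigoplus_\omega \pi_\omega$ as in Lemma~\ref{Cuntz}, so that $\cO(g)\cong C^*(\widetilde\pi({\bf W}))$, and define the surjective $*$-homomorphism
\[
\Phi:C^*({\bf W})\longrightarrow \cO(g),\qquad \Phi(A):=\widetilde\pi(A).
\]
It remains to show $\ker\Phi=\boldsymbol\cK(F^2(H_n))$.

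For the inclusion $\boldsymbol\cK(F^2(H_n))\subset\ker\Phi$: since $\widetilde\pi$ is of Cuntz type, $\widetilde\pi\!\left(\Delta_{g^{-1}}({\bf W},{\bf W}^*)\right)=\Delta_{g^{-1}}(\widetilde\pi({\bf W}),\widetilde\pi({\bf W})^*)=0$. Because $\Delta_{g^{-1}}({\bf W},{\bf W}^*)=P_{\CC 1}$ (Proposition~\ref{W}) and the rank-one operators $q({\bf W})P_{\CC 1}g({\bf W})^*$ appearing in the proof of Theorem~\ref{compact}(i)--(ii) span a norm-dense subspace of $\boldsymbol\cK(F^2(H_n))$, $\Phi$ vanishes on $\boldsymbol\cK(F^2(H_n))$. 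Consequently $\Phi$ factors as $\Phi=\bar\Phi\circ q$, where $q:C^*({\bf W})\to C^*({\bf W})/\boldsymbol\cK(F^2(H_n))$ is the quotient map and $\bar\Phi:C^*({\bf W})/\boldsymbol\cK(F^2(H_n))\to \cO(g)$ is a surjective $*$-homomorphism sending $q(W_i)\mapsto \widetilde\pi(W_i)$.

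The opposite inclusion $\ker\Phi\subset\boldsymbol\cK(F^2(H_n))$ is the heart of the matter and is where the universal property of $\cO(g)$ is invoked. Because $P_{\CC 1}\in \boldsymbol\cK(F^2(H_n))$, we have $\Delta_{g^{-1}}(q({\bf W}),q({\bf W})^*)=0$; faithfully representing $C^*({\bf W})/\boldsymbol\cK(F^2(H_n))$ on some Hilbert space $\cH_0$ exhibits $q$ as a Cuntz type $*$-representation of $C^*({\bf W})$. The argument in Lemma~\ref{Cuntz} (applied via irreducible Cuntz type subrepresentations obtained from GNS) yields, for any polynomial $p$,
\[
\|p(q({\bf W}),q({\bf W})^*)\|\;\leq\;\|p(\widetilde\pi({\bf W}),\widetilde\pi({\bf W})^*)\|,
\]
so the assignment $\widetilde\pi(W_i)\mapsto q(W_i)$ extends to a (surjective) $*$-homomorphism $\rho:\cO(g)\to C^*({\bf W})/\boldsymbol\cK(F^2(H_n))$. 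The compositions $\bar\Phi\circ\rho$ and $\rho\circ\bar\Phi$ fix the respective sets of generators, hence are the identity maps, so $\bar\Phi$ is a $*$-isomorphism. Therefore
\[
\ker\Phi \;=\;\ker(\bar\Phi\circ q)\;=\;\ker q\;=\;\boldsymbol\cK(F^2(H_n)),
\]
which establishes exactness. The main technical obstacle is justifying that $\cO(g)$ really has the universal property needed to produce $\rho$: one must check that the GNS/irreducibility argument of Lemma~\ref{Cuntz} applies to the (a priori not irreducible) Cuntz type representation coming from the quotient by the compacts, which is handled by decomposing into irreducible subrepresentations and comparing norms as above.
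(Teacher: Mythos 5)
Your proposal is correct and follows essentially the same route as the paper: both arguments identify $\boldsymbol\cK(F^2(H_n))$ with the ideal generated by $\Delta_{g^{-1}}({\bf W},{\bf W}^*)=P_{\CC 1}$ via Theorem~\ref{compact}, observe that the quotient map is a Cuntz type representation, and combine the two resulting norm inequalities (one from $\widetilde\pi$ annihilating the compacts, one from the universal domination of Lemma~\ref{Cuntz} applied through irreducible GNS subrepresentations) to conclude that $\cO(g)\cong C^*({\bf W})/\boldsymbol\cK(F^2(H_n))$. The only difference is presentational — you phrase the conclusion in terms of $\ker\Phi$ while the paper exhibits the mutually inverse $*$-homomorphisms $\sigma$ and $\Omega$ directly.
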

\begin{proof}

 Due to Theorem \ref{compact},  the two-sided ideal   $\cJ$  generated by the orthogonal projection
 $\Delta_{g^{-1}}({\bf W}, {\bf W})$ in the $C^*$-algebra $C^*({\bf W})$ coincides with the ideal  $\boldsymbol\cK(F^2(H_n))$ of all compact operators in $B(F^2(H_n))$.
 According to Lemma \ref{Cuntz}, it is enough to  prove that  $C^*(\widetilde V)$ is $*$-isomorphic to the $C^*$-algebra $C^*({\bf W})/_{\cJ}$.
 
 Since  the  Cuntz type $*$-representation $\widetilde \pi:C^*({\bf W})\to C^*(\widetilde V)$ has the property that $\widetilde\pi(\cJ)=0$, it  induces a $*$-representation 
 $\sigma:C^*({\bf W})/_{\cJ}\to C^*(\widetilde V)$
 such that 
 $
 \sigma\left(q({\bf W},{\bf W}^*)+\cJ\right)=q(\widetilde V, \widetilde V^*)
 $
 for every polynomial  $q ({\bf Z}, {\bf Y})$  in noncommutative indeterminates ${\bf Z}=\{Z_1,\ldots, Z_n\}$ and ${\bf Y}=\{Y_1,\ldots, Y_n\}$.
 Therefore, $\sigma$ is surjective and 
 \begin{equation}
 \label{qq}
 \|q(\widetilde V, \widetilde V^*))\|\leq \|q({\bf W},{\bf W}^*)+\cJ\|.
 \end{equation}
 Let $\rho:C^*({\bf W})\to C^*({\bf W})/_{\cJ}$ be the canonical quotient map and note that, due to the fact that
 $\cJ$  is generated by the orthogonal projection
 $\Delta_{g^{-1}}({\bf W}, {\bf W})$,  we have  
     $ \Delta_{g^{-1}}(\rho({\bf W}),\rho({\bf W}^*))=0, $ 
     where    $\rho({\bf W}):=(\rho(W_1),\ldots, \rho(W_n))$. Therefore, $\rho$  is a Cuntz type $*$-representation of $C^*({\bf W})$ and 
 $$
\|q(\rho({\bf W}),\rho({\bf W}^*))\| \leq  \|q(\widetilde V, \widetilde V^*)\|.
$$
Hence and using relation  \eqref{qq}, we obtain
$
\|q(\widetilde V, \widetilde V^*)\|= \|q({\bf W},{\bf W}^*)+\cJ\|.
$
 Now, the map   defined by
 $$
 \Omega(q(\widetilde V, \widetilde V^*)):=q({\bf W},{\bf W}^*)+\cJ
 $$
 for all noncommutative polynomials  $q ({\bf Z}, {\bf Y})$ is  a  contractive $*$-homomorphism on $C^*(\widetilde V)$ which extends by continuity to a $*$-homomorphism  of $C^*(\widetilde V)$ onto $C^*({\bf W})/_\cJ$. It is clear now that $\Omega$ is a $*$-isomorphism of $C^*$-algebras.
  The proof is complete.
\end{proof}

\begin{theorem} \label{C*}    Let $\pi$ be   a  unital
$*$-representation  of $C^*({\bf W})$ on a separable Hilbert
space  $\cK$ and let   $V:=(V_1,\ldots, V_n)$, where $V_{i}:=\pi({W}_{i})$.
If  $\pi$ is not a Cuntz type  $*$-representation, then  the $C^*$-algebras $C^*({\bf W})$  and   $C^*({V})$ are $*$-isomorphic.
\end{theorem}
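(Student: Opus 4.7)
The plan is to show that $\pi$ is injective; once this is established, the general fact that any injective $*$-homomorphism between $C^*$-algebras is isometric immediately gives the $*$-isomorphism $C^*({\bf W})\cong \pi(C^*({\bf W}))=C^*(V)$. So the goal is to prove that $\ker\pi=\{0\}$ whenever $\pi$ fails to be of Cuntz type.

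First, I would translate the Cuntz-type hypothesis into a statement about a single compact operator. By Proposition \ref{W}, under the standing hypothesis that $\sum_{k=0}^\infty\sum_{|\alpha|=k}a_\alpha W_\alpha W_\alpha^*$ converges in the operator norm topology (the blanket assumption of Section 5), we have
$$
\Delta_{g^{-1}}({\bf W},{\bf W}^*)=P_{\CC 1},
$$
the rank-one projection onto the constants. Since $\pi$ is norm-continuous, this yields
$$
\pi(P_{\CC 1})=\Delta_{g^{-1}}(\pi({\bf W}),\pi({\bf W})^*)=\Delta_{g^{-1}}(V,V^*).
$$
By the hypothesis that $\pi$ is not a Cuntz type $*$-representation, the right-hand side is nonzero, so $\pi(P_{\CC 1})\neq 0$.

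Next, I would invoke Theorem \ref{compact}(ii), which asserts that $\boldsymbol\cK(F^2(H_n))$ is the unique minimal nontrivial two-sided ideal of $C^*({\bf W})$, and in particular that every nontrivial closed two-sided ideal of $C^*({\bf W})$ contains $\boldsymbol\cK(F^2(H_n))$. Suppose, for contradiction, that $\ker\pi\neq\{0\}$. Then $\ker\pi$ is a nontrivial closed two-sided ideal of $C^*({\bf W})$, so by the above
$$
\boldsymbol\cK(F^2(H_n))\subset \ker\pi.
$$
Since $P_{\CC 1}$ is a rank-one projection, it lies in $\boldsymbol\cK(F^2(H_n))$, and therefore $\pi(P_{\CC 1})=0$, contradicting the previous paragraph.

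Hence $\ker\pi=\{0\}$, so $\pi:C^*({\bf W})\to C^*(V)$ is an injective surjective $*$-homomorphism, and is therefore an isometric $*$-isomorphism. I do not expect a serious obstacle: the argument is a short syllogism combining Proposition \ref{W} (to identify $\Delta_{g^{-1}}({\bf W},{\bf W}^*)$ with a compact operator) and Theorem \ref{compact}(ii) (to force any nontrivial kernel to swallow the compacts). The only point that requires care is confirming that the norm-convergence assumption of Section 5 is in force so that both Proposition \ref{W} applies in its strong form and Theorem \ref{compact}(ii) is available.
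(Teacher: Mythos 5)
Your proof is correct, but it takes a genuinely different route from the paper. The paper's own argument goes through the Wold decomposition of Theorem \ref{wold2}: it writes $\cK=\cK^{(0)}\oplus\cK^{(1)}$ with $V_i=(W_i\otimes I_\cD)\oplus V_i'$, where $\dim\cD\geq 1$ precisely because $\pi$ is not of Cuntz type, and then computes for every noncommutative polynomial $q$ that $\|q(V,V^*)\|=\max\{\|q({\bf W},{\bf W}^*)\|,\|q(V',V'^*)\|\}=\|q({\bf W},{\bf W}^*)\|$, so that $\pi$ is isometric on a dense $*$-subalgebra and hence on all of $C^*({\bf W})$. You instead argue through the ideal structure: $\ker\pi$ is a closed two-sided ideal, any nontrivial such ideal contains $\boldsymbol\cK(F^2(H_n))$, and $\pi$ does not annihilate the compact projection $P_{\CC 1}=\Delta_{g^{-1}}({\bf W},{\bf W}^*)$ because $\pi$ is not of Cuntz type; hence $\ker\pi=\{0\}$ and $\pi$ is automatically isometric. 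Two small points of care. First, the bare statement of Theorem \ref{compact}(ii) asserts only the existence of a unique \emph{minimal} nontrivial ideal, which does not by itself formally imply that every nontrivial closed ideal contains it; what you need is the stronger fact established inside the proof of that theorem, namely that an arbitrary nontrivial two-sided ideal of $C^*({\bf W})$ contains $\boldsymbol\cK(F^2(H_n))$ --- cite the proof rather than the statement. Second, as you note, the norm convergence of $\sum_{k}\sum_{|\alpha|=k}a_\alpha W_\alpha W_\alpha^*$ is the standing hypothesis of Section 5; it is needed both to place $P_{\CC 1}$ in $C^*({\bf W})$ and to identify $\pi(P_{\CC 1})$ with $\Delta_{g^{-1}}(V,V^*)$, and the paper's proof needs it just as much (via Theorem \ref{wold2}). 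Your route is shorter and avoids the Wold machinery; the paper's route yields, as a by-product, the explicit structure of $V$ as a direct sum of an ampliation of the universal model and a Cuntz tuple.
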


\begin{proof} According to  Theorem \ref{wold2}, we have the Wold decomposition $\cK=\cK^{(0)}\oplus \cK^{(1)}$ and 
$
V_{i}=({W}_{i}\otimes I_\cD)\bigoplus V_{i}'$, $i\in  \{1,\ldots, k\},
$
 where   $V'_{i}:=V_{i}|_{\cK^{(1)}}$  and 
$ \cD$ is a Hilbert space with $\dim \cD\geq 1$.
Moreover,   ${V}':=(V_1', \ldots, V_k')$ is     a Cuntz   $k$-tuple  in 
$\cD_{g^{-1}}(\cK^{(1)})$.
  As a consequence, we deduce that
 $$
 q(V, V^*)=\left(q({\bf W}, {\bf W}^*)\otimes I_\cD\right)
 \bigoplus q(V',
  V'^*)
 $$
 for any noncommutative polynomial  $q ({\bf Z}, {\bf Y})$.   Since   $\pi|_{\cK^{(1)}}$ is a $*$-representation,  we have
$
\|q(V', V'^*)\|\leq  \|q({\bf W}, {\bf W}^*)\|.
$
On the other hand, since  $\cD\neq \{0\}$, we have
$$
\| q(V, V^*)\|=\max\left\{ \| q({\bf W}, {\bf W}^*))\|,
 \|  q(V',V'^*))\|\right\}=\| q({\bf W}, {\bf W})\|.
$$
 Due to the fact that  $\pi: C^*({\bf W})\to C^*({V})$ is surjective,  we have  
$\|\pi(g)\|=\|g\|$ for all $g\in  C^*({\bf W})$. Therefore,   $\pi$ is a  $*$-isomorphism of $C^*$-algebras.
The proof is complete.
\end{proof}

\begin{theorem}\label{ideal2}  Let $\pi_c:C^*({\bf W})\to B(\cK)$ be   a Cuntz type
$*$-representation  of $C^*({\bf W})$ on a  Hilbert
space  $\cK$, set   $C_{i}:=\pi_c({W}_{i})$ and $V_{i}:={W}_{i}\oplus C_{i}$ for $i\in \{1,\ldots, n\}$.  
Then there is a unique minimal nontrivial  two-sided ideal  ${\cJ_\Delta}$ of $C^*({ V})$  and  
$${\cJ_\Delta}=\widetilde{\boldsymbol\cK} \cap C^*({V})= \boldsymbol\cK(F^2(H_n))\oplus 0,
 $$
 where   $\widetilde{\boldsymbol\cK}$ is the ideal of all  compact operators in $B(F^2(H_n)\oplus \cK)$.
Moreover,  if $V:=(V_1,\ldots, V_n)$, 
then the  sequence
of $C^*$-algebras
$$
0\to {\cJ_\Delta} \to C^*({V})\to \cO(g)\to 0
$$
is exact.
    
  \end{theorem}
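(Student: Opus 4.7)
The plan is to exhibit $C^*(V)$ as a concrete isomorphic copy of $C^*({\bf W})$ and transport the ideal-theoretic information already established in Theorems \ref{compact} and \ref{exact1}. Consider the $*$-representation $\pi := \operatorname{id} \oplus \pi_c : C^*({\bf W}) \to B(F^2(H_n) \oplus \cK)$, which satisfies $\pi(W_i) = V_i$ and whose image is $C^*(V)$. Because
$$
\Delta_{g^{-1}}(\pi({\bf W}),\pi({\bf W})^*) = P_{\CC 1} \oplus \pi_c(P_{\CC 1}) = P_{\CC 1} \oplus 0 \neq 0,
$$
$\pi$ is \emph{not} a Cuntz type representation, so Theorem \ref{C*} gives that $\pi$ is a $*$-isomorphism $C^*({\bf W}) \to C^*(V)$; in particular, every element of $C^*(V)$ has the unique form $X \oplus \pi_c(X)$ with $X \in C^*({\bf W})$.

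Under this isomorphism, $\Delta_{g^{-1}}(V,V^*)$ corresponds to the orthogonal projection $P_{\CC 1} = \Delta_{g^{-1}}({\bf W},{\bf W}^*) \in C^*({\bf W})$. By Theorem \ref{compact}(ii), the two-sided ideal generated by $P_{\CC 1}$ is the unique minimal nontrivial two-sided ideal of $C^*({\bf W})$ and coincides with $\boldsymbol\cK(F^2(H_n))$. Transporting via $\pi$, and using that the Cuntz type hypothesis on $\pi_c$ combined with the exact sequence of Theorem \ref{exact1} forces $\pi_c|_{\boldsymbol\cK(F^2(H_n))} \equiv 0$, one obtains
$$
\cJ_\Delta = \pi(\boldsymbol\cK(F^2(H_n))) = \boldsymbol\cK(F^2(H_n)) \oplus 0,
$$
and this is the unique minimal nontrivial two-sided ideal of $C^*(V)$.

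For the identification $\cJ_\Delta = \widetilde{\boldsymbol\cK} \cap C^*(V)$, the inclusion $\subseteq$ is clear. Conversely, any element of $\widetilde{\boldsymbol\cK} \cap C^*(V)$ has the form $\pi(X) = X \oplus \pi_c(X)$ with $X \in C^*({\bf W})$; compactness of the first block says $X$ is a compact operator in $C^*({\bf W})$, which by Theorem \ref{compact} lies in $\boldsymbol\cK(F^2(H_n))$, and then $\pi_c(X) = 0$ by the observation above. Finally, the exact sequence $0 \to \cJ_\Delta \to C^*(V) \to \cO(g) \to 0$ follows by passing to the quotient: $\pi$ descends to a $*$-isomorphism $C^*(V)/\cJ_\Delta \cong C^*({\bf W})/\boldsymbol\cK(F^2(H_n)) \cong \cO(g)$, the last isomorphism being the content of Theorem \ref{exact1}. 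I do not foresee a genuinely hard step; the crucial observation is that $\pi_c$ kills the compact ideal, which makes the $*$-isomorphism $\pi$ respect the relevant ideal structure.
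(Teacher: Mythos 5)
Your proof is correct, and it reaches the conclusion by a somewhat different organization than the paper. The paper works directly inside $C^*(V)$: it takes an arbitrary nontrivial ideal $\cJ$, picks $E\oplus F\in \cJ$ nonzero, uses the polynomial norm inequality $\|q_s({\bf W},{\bf W}^*)\oplus q_s(C,C^*)\|\leq \|q_s({\bf W},{\bf W}^*)\|$ to force $E\neq 0$, and then reruns the rank-one-projection argument from Theorem \ref{compact}(ii) to get $\boldsymbol\cK(F^2(H_n))\oplus 0\subset \cJ$; for the reverse inclusion it shows $F=0$ via $\|F\|\leq\|\rho(E)\|=0$ with $\rho$ the quotient by the compacts. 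You instead front-load the structural fact that $\operatorname{id}\oplus\pi_c$ is a $*$-isomorphism of $C^*({\bf W})$ onto $C^*(V)$ and transport the unique minimal ideal of Theorem \ref{compact}(ii) across it; the key auxiliary observation in both arguments is the same, namely that $\pi_c$ annihilates $\boldsymbol\cK(F^2(H_n))$ because that ideal is generated by $P_{\CC 1}=\Delta_{g^{-1}}({\bf W},{\bf W}^*)$ and $\pi_c(P_{\CC 1})=0$ by the Cuntz hypothesis. Your route is shorter and makes the logical dependence on Theorems \ref{compact}, \ref{exact1}, and \ref{C*} explicit; the paper's route is self-contained at the ideal level and does not need the full isomorphism statement. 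One small caveat: Theorem \ref{C*} is stated for representations on a \emph{separable} Hilbert space (its proof goes through the Wold decomposition of Theorem \ref{wold2}), whereas Theorem \ref{ideal2} allows arbitrary $\cK$. For your specific representation $\operatorname{id}\oplus\pi_c$ this is harmless, since the isometry $\|q(V,V^*)\|=\max\{\|q({\bf W},{\bf W}^*)\|,\|q(C,C^*)\|\}=\|q({\bf W},{\bf W}^*)\|$ follows directly from $\pi_c$ being a contractive $*$-homomorphism, with no Wold decomposition needed; it would be worth saying this explicitly rather than citing Theorem \ref{C*} as a black box.
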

\begin{proof}  
Set  ${\bf C}:=(C_1,\ldots, C_n)$, and note that
$$
\Delta_{g^{-1}}(V, V^*)=\Delta_{g^{-1}}({\bf W}, {\bf W}^*)\bigoplus \Delta_{g^{-1}}(C, C^*)=
P_{\CC1}\oplus 0.
$$
 This shows that    $\widetilde{\boldsymbol\cK}\cap C^*({V})$ is a nontrivial ideal in $C^*({V})$. Let $\cJ$  be an arbitrary nontrivial ideal of $C^*({V})$ and let $E\oplus F\in \cJ$ with $E\oplus F\neq 0$.
Then there is a sequence   $q_s\in \CC\left< {\bf Z}, {\bf Y}\right>$, $s\in \NN$,  of polynomials such that
$q_s({\bf W}, {\bf W}^*)\to E$ and $q_s(C, C^*)\to F$ in the operator norm as
 $s\to \infty$.
Due to the fact that 
$$
\|q_s({\bf W}, {\bf W}^*)\bigoplus q_s(C, C^*)\|\leq \|q_s({\bf W}, {\bf W}^*)\|,
$$
we deduce that $\|E\oplus F\|\leq \|E\|$. Since $E\oplus F\neq 0$, we deduce that $E\neq 0$. Consequently, there is $\alpha\in \FF_n^+$ such that $Ee_{\alpha}\neq 0$.
As in the proof of Theorem \ref{compact},  part (ii), one can use  the fact that ${C}$  is a Cuntz  tuple  
 to show that  $ \boldsymbol\cK(F^2(H_n))\oplus 0\subset \cJ$. 
 This shows that $\boldsymbol\cK(F^2(H_n))\oplus 0$ is the minimal nontrivial ideal of $C^*({V})$. 
 Thus ${\cJ_\Delta}=\boldsymbol\cK(F^2(H_n))\oplus 0$.
Since $\boldsymbol\cK(F^2(H_n))\oplus 0\subset  \widetilde{\boldsymbol{\cK}}\cap C^*(V)$, it remains to prove the reverse inclusion.
Indeed,   if we fix an operator  $E\oplus F\in \widetilde{\boldsymbol{\cK}}\cap C^*(V)\subset B(F^2(H_{n})\oplus \cK)$, then $E\in \boldsymbol\cK(F^2(H_{n}))$ and $F$ is a compact operator in $C^*({C})\subset B(\cK)$.
Let  $q_s\in \CC\left< {\bf Z}, {\bf Y}\right>$, $s\in \NN$,  be a sequence of polynomials such that
$q_s({\bf W}, {\bf W}^*)\to E$ and $q_s(C, C^*)\to F$ in the operator norm as
 $s\to \infty$. According to the proof of Theorem \ref{exact1}, we have
 $$
 \|q_s(C, C^*)\|\leq \|q_s({\bf W}, {\bf W}^*)+\boldsymbol\cK(F^2(H_n))\|
 $$
 which implies  $\|F\|\leq \|\rho(E)\|$, where
  $\rho: C^*({\bf W})\to C^*({\bf W})/_{\boldsymbol\cK(F^2(H_n))}$ is the quotient map.  Since $E$ is a compact operator, we have $\|\rho(E)\|=0$, which implies $F=0$. Consequently, we have
$$\boldsymbol\cK(F^2(H_n))\oplus 0=\widetilde{\boldsymbol{\cK}}\cap C^*(V).$$
 The proof of the exactness of the sequence  is similar to that of Theorem \ref{exact1}.   We leave it to the reader.
\end{proof}

   Due to the Wold decomposition of Theorem \ref{wold}, the result of Theorem \ref{ideal2} carries over
  to  $C^*$-algebras   $C^*(\pi({\bf W}))$, where $\pi:C^*({\bf W})\to B(\cK)$ is an arbitrary unital  $*$-representation on a Hilbert space $\cK$  which is not a Cuntz type $*$-representation. We remark that Theorem \ref{exact1}  and Theorem \ref{ideal2} hold, in particular, for  the admissible free holomorphic function provided by  Example \ref{scale}.

      \bigskip

       %

      \end{document}